\theoremstyle{plain}
\newtheorem{theorem}{Theorem}[section]
\newtheorem{proposition}[theorem]{Proposition}
\newtheorem{corollary}{Corollary}[section]
\newtheorem{lemma}[theorem]{Lemma}
\theoremstyle{remark}
\def \a {\mathbf a}
\def \bP {\mathbb P}
\def \rme {\mathrm e}
\def \a {\alpha}
\def \E {\mathbb{E}}
\def \N {\mathbb{N}}
\def \P {\mathbb{P}}
\def \R {\mathbb{R}}
\def \cC {\mathcal{C}}
\def \cG {\mathcal{G}}
\def \cI {\mathcal{I}}
\def \cN {\mathcal{N}}
\def \cR {\mathcal{R}}
\def \cU {\mathcal{U}}
\def \cV {\mathcal{V}}
\def \lp {\left(}
\def \rp {\right)}
\def \ls {\left\{}
\def \rs {\right\}}
\def \rme{\mathrm{e}}
\DeclareMathOperator{\TW}{TW}
\DeclareMathOperator{\GUE}{GUE}
\def\osbd{{{\rm BD}^{\sharp}}}
\def\BLPP{D}
\def\PLPP{L}
\begin{document}
\begin{frontmatter}

\title{GUE Fluctuations Near the Axis in One-Sided Ballistic Deposition}
\runtitle{Fluctuations in One-Sided Ballistic Deposition}

\begin{aug}
\author[A,B]{\fnms{Pablo} \snm{Groisman}\ead[label=e1, mark]{pgroisma@dm.uba.ar}},
\author[B]{\fnms{Alejandro F.} \snm{Ram\'irez}\ead[label=e2,mark]{ar23@nyu.edu}}
\author[C]{\fnms{Santiago} \snm{Saglietti}\ead[label=e3,mark]{saglietti.sj@uc.cl}}
\and
\author[A]{\fnms{Sebasti\'an} \snm{Zaninovich}\ead[label=e4,mark]{szaninovich@dm.uba.ar}}
\address[A]{Fac. Cs. Exactas y Naturales, Universidad de Buenos Aires and IMAS UBA-CONICET,
\printead{e1,e4}}

\address[B]{NYU-ECNU Institute of Mathematical Sciences at NYU Shanghai
\printead{e2}}

\address[C]{Facultad de Matemáticas, Pontificia Universidad Católica de Chile
\printead{e3}}

\end{aug}

\begin{abstract}
We introduce a variation of the classic ballistic deposition model in which vertically falling blocks can only stick to the top or the upper right corner of growing columns. We establish that fluctuations of the height function at points near the $t$-axis are given by the GUE ensemble and its corresponding Tracy-Widom limiting distribution. The proof is based on a graphical construction of the process in terms of a directed Last Passage Percolation model. Using this graphical construction, we define the notion of geodesics for the height function and show that the wandering exponent governing the transversal fluctuations of these geodesics is $2/3$.
\end{abstract}

\begin{keyword}[class=MSC]
\kwd[Primary]{60K35}
\kwd{82C41}
\end{keyword}

\begin{keyword}
\kwd{ballistic deposition}
\kwd{last passage percolation}
\kwd{Gaussian unitary ensemble}
\kwd{Tracy-Widom distribution}
\kwd{fluctuations}
\end{keyword}

\end{frontmatter}

\tableofcontents

\section{Introduction and main results}

Ballistic deposition is a mathematical model describing the growth of an interface due to the random accumulation of aggregates or particles in space. It has attracted extensive interest both in the physical sciences and mathematics community due to its simple description and its relevance as a physical model for random aggregation. Nevertheless, to this date, it has remained essentially mathematically intractable. 
Vold \cite{Vold} introduced the model as a tool to numerically calculate the sediment volume to be expected for a suspension that is diluted enough so that each particle is separated from every other and then allowed to settle quietly, under gravity. It has also been considered as a toy model for diffusion limited aggregation \cite{Atar}.

The process considered in the mathematical community can be informally described as follows: on each site $x \in \mathbb Z^d$, particles fall vertically at random forming columns which grow over time, in such a way that a particle falling on $x$ sticks to the top of the highest column among those growing either above $x$ or one of its neighboring sites.  
We call this model the $d+1$ dimensional case, for a precise definition see \cite{Penrose, Timo}. Here, we restrict ourselves to $d=1$, where the height of the columns describes a one-dimensional interface on the plane. {In what follows, we denote the height of this interface at time $t$ above site $x$ by $h(t,x)$.}

A law of large numbers and a scaling limit for the height of the interface are known to hold \cite{Timo, Penrose}, in particular proving that it grows linearly in time as $ct$ for some $c>0$.
With a law of large numbers at hand, it is natural to wonder about the fluctuations of the growing interface, which are believed to belong to the so-called KPZ universality class \cite{Cannizzaro, Remenik, Corwin, Quastel}. However, apart from heuristic arguments, almost no mathematical evidence of this  conjecture is available.




{Models in the KPZ universality class all have an analogue of our height function $h$ which is conjectured to have the following common behavior (see \cite{Remenik, Quastel, Corwin}): 
\begin{enumerate}
    \item Fluctuations on the height function at time $t$ are expected to be of order $t^{\chi}$, with $\chi=1/3$.
    \item Spatial correlations are expected to be of order $t^{\xi}$, where the exponents $\chi$ and $\xi$ obey the universal relation $\chi=2\xi -1$ in any dimension. For $\chi=1/3$, this gives $\xi=2/3$.
    \item For large times and under the so-called \textit{KPZ 1:2:3 scaling}, the height function is expected to converge to a limiting universal field $\mathfrak{h}$, i.e., as $r \to \infty$, 
\begin{equation}\label{eq:KPZ}
r^{-\frac{1}{3}}\Big( h(c_1rt, \lfloor c_2 r^{\frac{2}{3}}x\rfloor) - c_3 rt\Big) \overset{d}{\longrightarrow} \mathfrak{h}(t,x),
\end{equation}
where the constants $c_1$, $c_2$ and $c_3$ may depend on the particular model but the field $\mathfrak{h}$ does not, it only
depends on the initial condition. The universal field $\mathfrak{h}$ is known as the \textit{KPZ fixed point}, and its marginal distributions have been explicitly computed for some particular initial conditions, see \cite{Matetski}. In particular, for narrow wedge initial conditions, the marginals of the KPZ fixed point are given by the GUE Tracy-Widom distribution \cite{Remenik,Tracy}, so that the fluctuations of the height function rescaled at time~$t$ by $t^{1/3}$ are expected to converge to this law, which we denote hereafter by  $\TW_{\GUE}$.

\end{enumerate}

Despite the behavior of fluctuations remaining mostly conjectural so far (see the next paragraph), a handful of rigorous results have been established for the model, including: the linear growth of the interface \cite{Timo} and the convergence of the rescaled interface to the viscosity solution of a Hamilton-Jacobi equation \cite{Timo} obtained by Sepp\"al\"ainen; and the existence of an invariant distribution for the height process when centered around the height at the origin proved by Chaterjee \cite{Chaterjee2} (in fact, these results hold for every $d\ge 1$). Penrose and Yukich \cite{PenroseYukich} obtained central limit theorems and other results for the number of blocks deposited in a large region within a fixed time. A law of large numbers for a variant of the process, considered in a one-dimensional strip was proved by Atar in \cite{Atar}. Extensions and refinements of the results in \cite{Atar} have been obtained in \cite{Mansour, Braun}.

Concerning rigorous results on the fluctuations of ballistic deposition, a logarithmic lower bound for the variance of the height function has been proved by Penrose in \cite{Penrose} and
an upper bound of order $\sqrt{t/\log t}$ was obtained by Chatterjee in \cite{ChatterjeeSuper} for a variant of the model in which all heights are updated simultaneously at integer times and the block sizes are random. Random block sizes are also considered in \cite{Santi}, but in their setting the size of the blocks is heavy-tailed and, in addition, blocks stick to neighboring columns with probability $p$, which can be less than one.

Cannizzaro and Hairer considered another variant of the model in which the height at a specific location is updated to the height of a randomly selected neighbor. The law of the selected neighbor depends on a temperature parameter, with standard ballistic deposition corresponding to the zero-temperature case. This model is analyzed in great detail by the authors \cite{Cannizzaro}, proving in particular that the infinite temperature case belongs to a different (new) universality class.


In this work, we introduce the {\it one-sided ballistic deposition process}, which is a variant of $1+1$ standard ballistic deposition. In this one-sided version, particles stick only to one side of the blocks instead of both. As our main result, we establish that the height of the interface at points $(t,k)$ close to the $t$-axis has GUE fluctuations. To prove this, we use a toolbox developed to study directed percolation models \cite{Bodineau, Baik, Baryshnikov, Glynn, OConnell, OConnellYor, Chaterjee, Bates, Basu, Basu2}.

To state our results, let us give a precise
definition of the one-sided ballistic deposition process, to be denoted henceforth by $\osbd$. We start by considering a sequence $(Q^{(r)})_{r \in \N}$ of independent Poisson processes $Q^{(r)}=(Q^{(r)}_t)_{t \geq 0}$ of rate~$1$. Given $t \in \R_{\geq 0}$, we write $t \in Q^{(r)}$ to indicate that $t$ is a discontinuity point of $Q^{(r)}$. We also say that $t$ is a {\em mark} of $Q^{(r)}$. Now, for $G: \N \to [-\infty,\infty)$, the $\osbd$ process with initial condition $G$, denoted by $h_G=(h_G(t,k): t \geq 0\,,\,k \in \N)$, is defined recursively by setting $h_G(t,1):=G(1)+Q^{(1)}_t$ and then, for each $k \in \N$, defining $h_G(\cdot,k+1)$ as the right-continuous piecewise constant function which is discontinuous at a point $t$ if and only if $t \in Q^{(k+1)}$ and satisfies
\begin{equation}
\label{eq:definition.osbd}
h_G(t,k+1)=\begin{cases}G(k+1) & \text{ if $t=0$}\\ 1+ \max\{h_G(t^-,k),h_G(t^-,k+1)\} & \text{ if $t > 0$ and $t \in Q^{(k+1)}$,}\end{cases}
\end{equation}
see Figure \ref{fig:smallN}. A discrete-time version of this model was previously studied in \cite{Nagatani} by means of numerical simulations.
Two particular initial conditions $S,F$ play a special role for us. We call the configuration $S:=(-\infty)\mathbf{1}_{\N \setminus \{1\}}$  the {\it seed-type} or {\em narrow-wedge} initial condition (at $k=1$), while the configuration $F:=0$ is called the {\it flat} initial condition.

\begin{figure}
 \begin{center}
  \includegraphics[width=.45\textwidth]{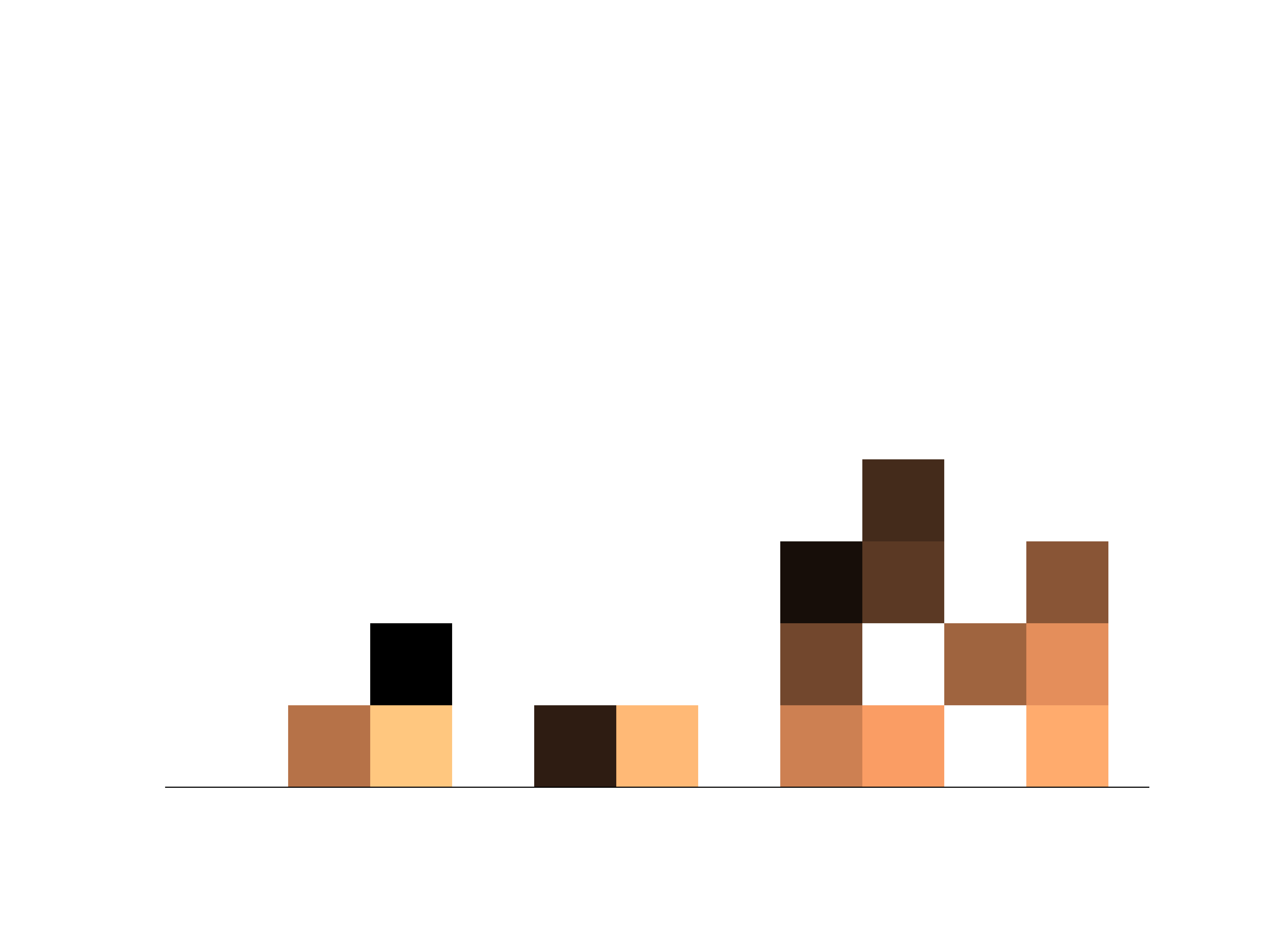}  \includegraphics[width=.5\textwidth]{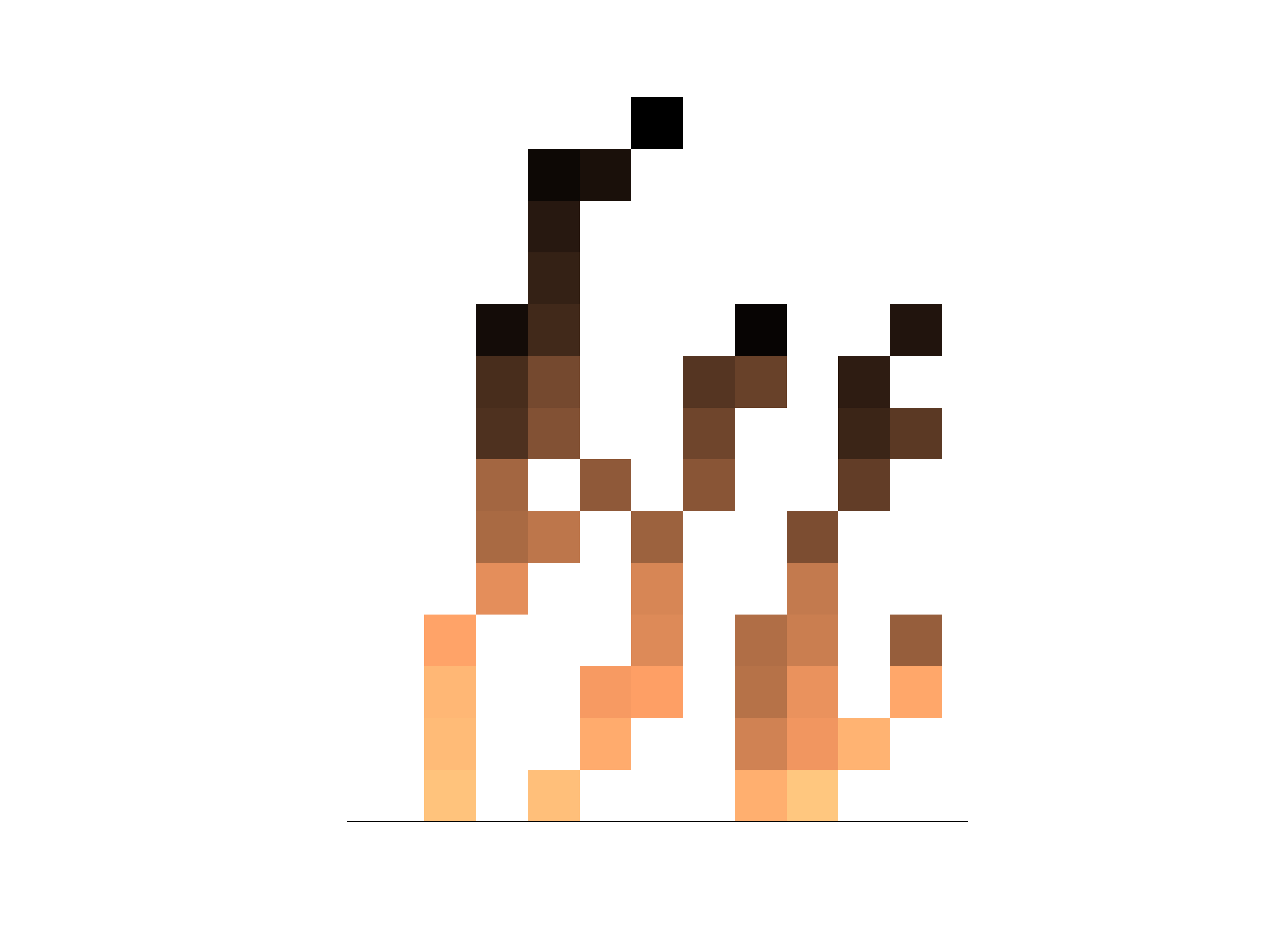}
\caption{Two realizations of the process with flat initial condition at time $t=1.5$ (left) and $t=5$ (right). The darker the color, the later the arrival of the block.}
\label{fig:smallN}
\end{center}
\end{figure}

We denote by $\cI$ the space of initial conditions $G$ such that $S \leq G \leq F$. In the sequel, $k$ always denotes a nonnegative integer (and we do not clarify this on every occasion). Also, $\lambda^{\max}_k$ stands for the largest eigenvalue in a $k \times k$ GUE random matrix and $\overset{d}{\longrightarrow}$ denotes convergence in distribution. {Finally, given a pair of functions $f,g:\R_{\geq 0} \to \R$, in the sequel we will write $f(t)=o(g(t))$ to mean that $\lim_{t \to \infty} \frac{f(t)}{g(t)}=0$.}

\begin{figure}
 \begin{center}
 \includegraphics[width=\textwidth]{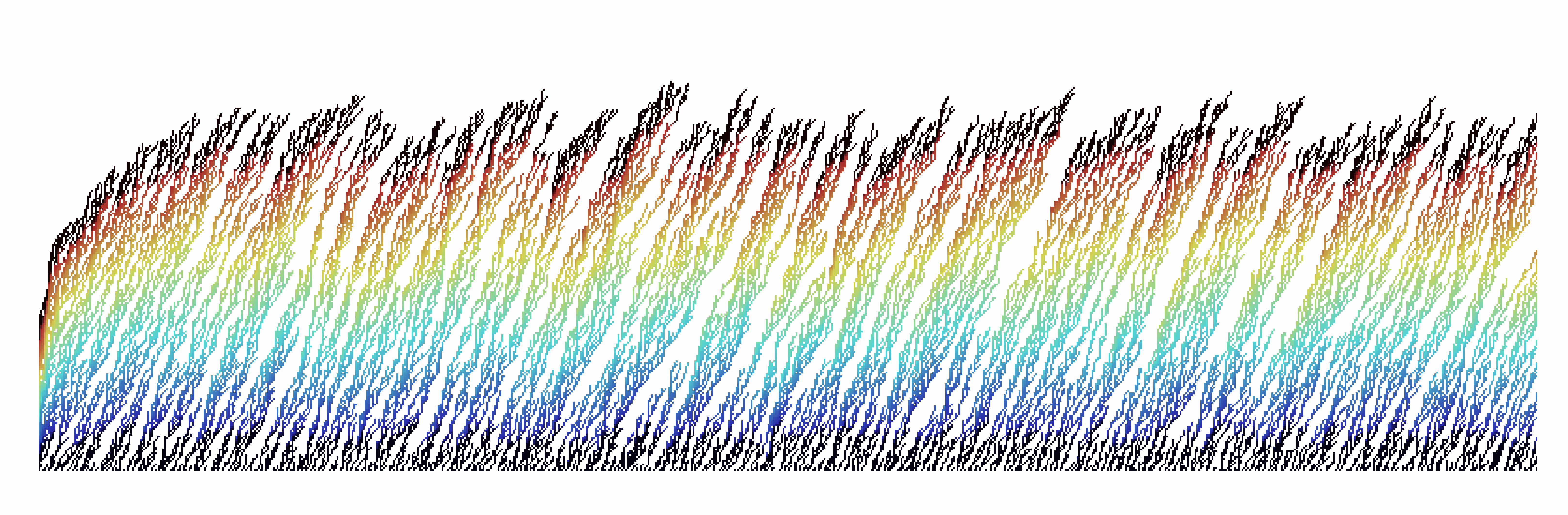}\\
\includegraphics[width=.5\textwidth]{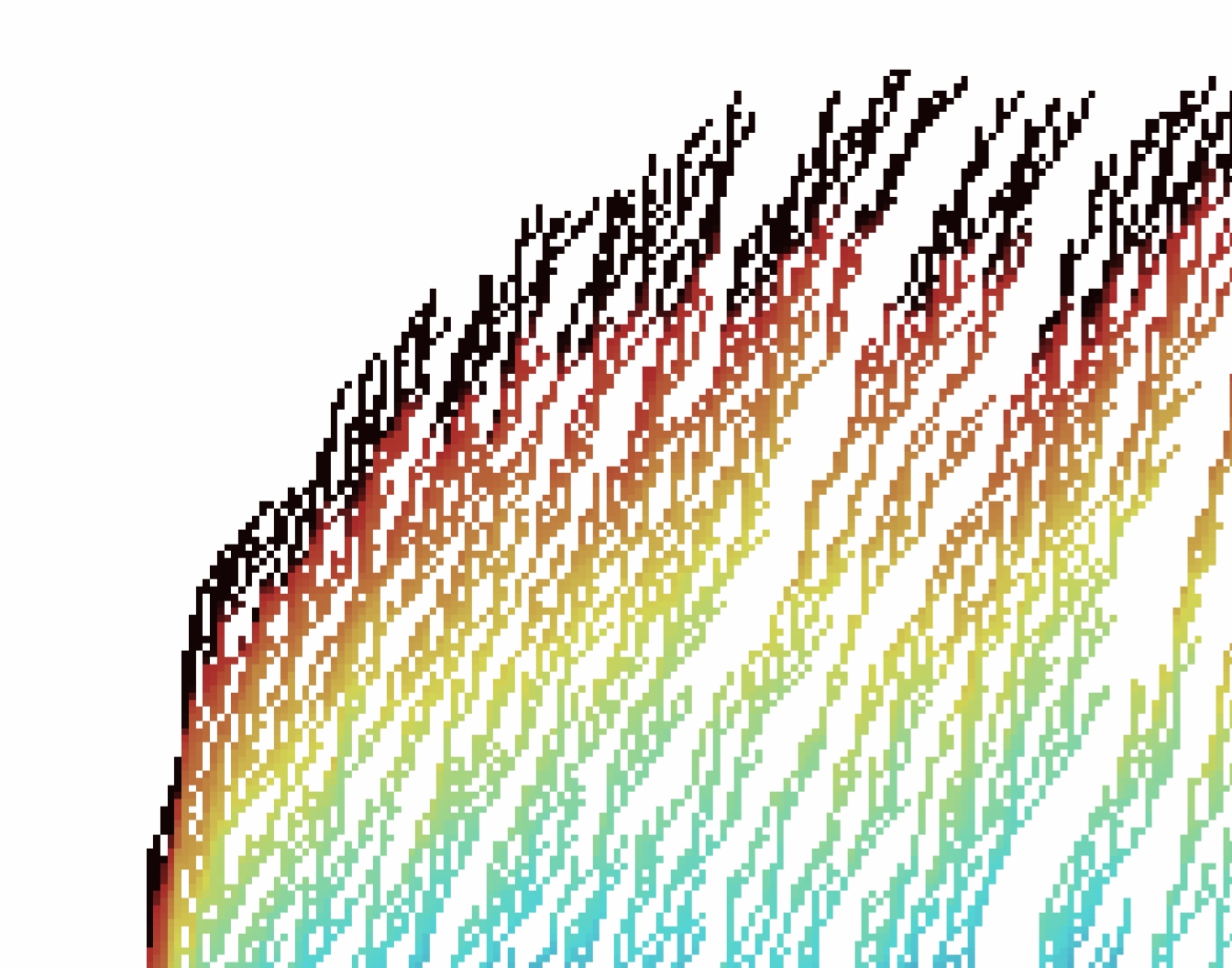}
\caption{A realization of the process with flat initial condition at time $t=75$ (top) and a zoom-in near the $t$-axis (bottom). On the bottom picture, the behavior $h_F(t,k) \sim  t + t^{1/2}\lambda_{k}^{\max}\sim t + t^{1/2}2\sqrt{k}$ for small values of $k$, predicted by Theorem~\ref{theo:1}, can be appreciated. See also Figure \ref{fig:seed} (right).}\label{fig:flat}
\end{center}
\end{figure}

\begin{figure}
 \begin{center}
 \includegraphics[width=.25\textwidth]{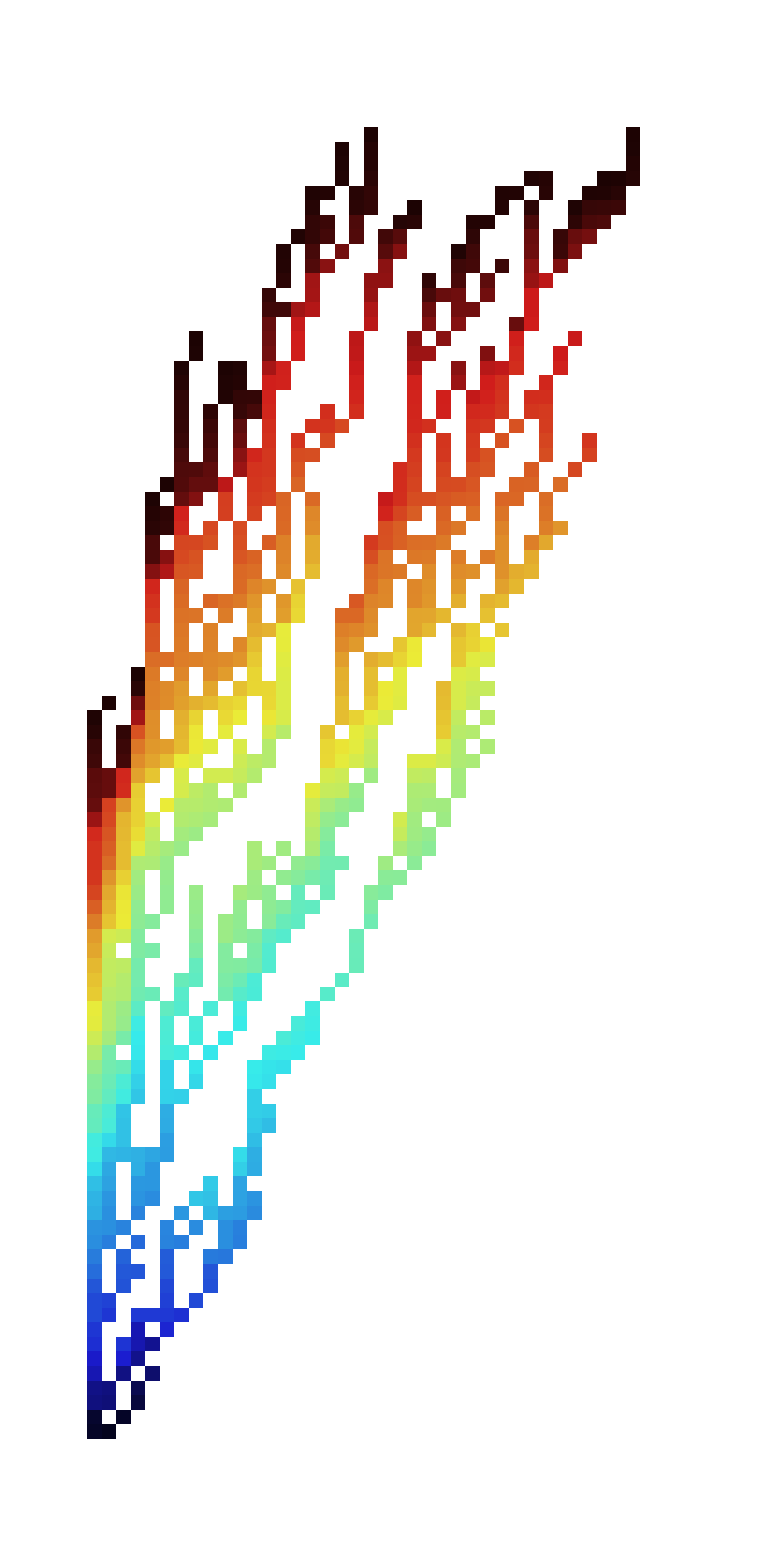}  \hspace{.2\textwidth}\includegraphics[width=.25\textwidth]{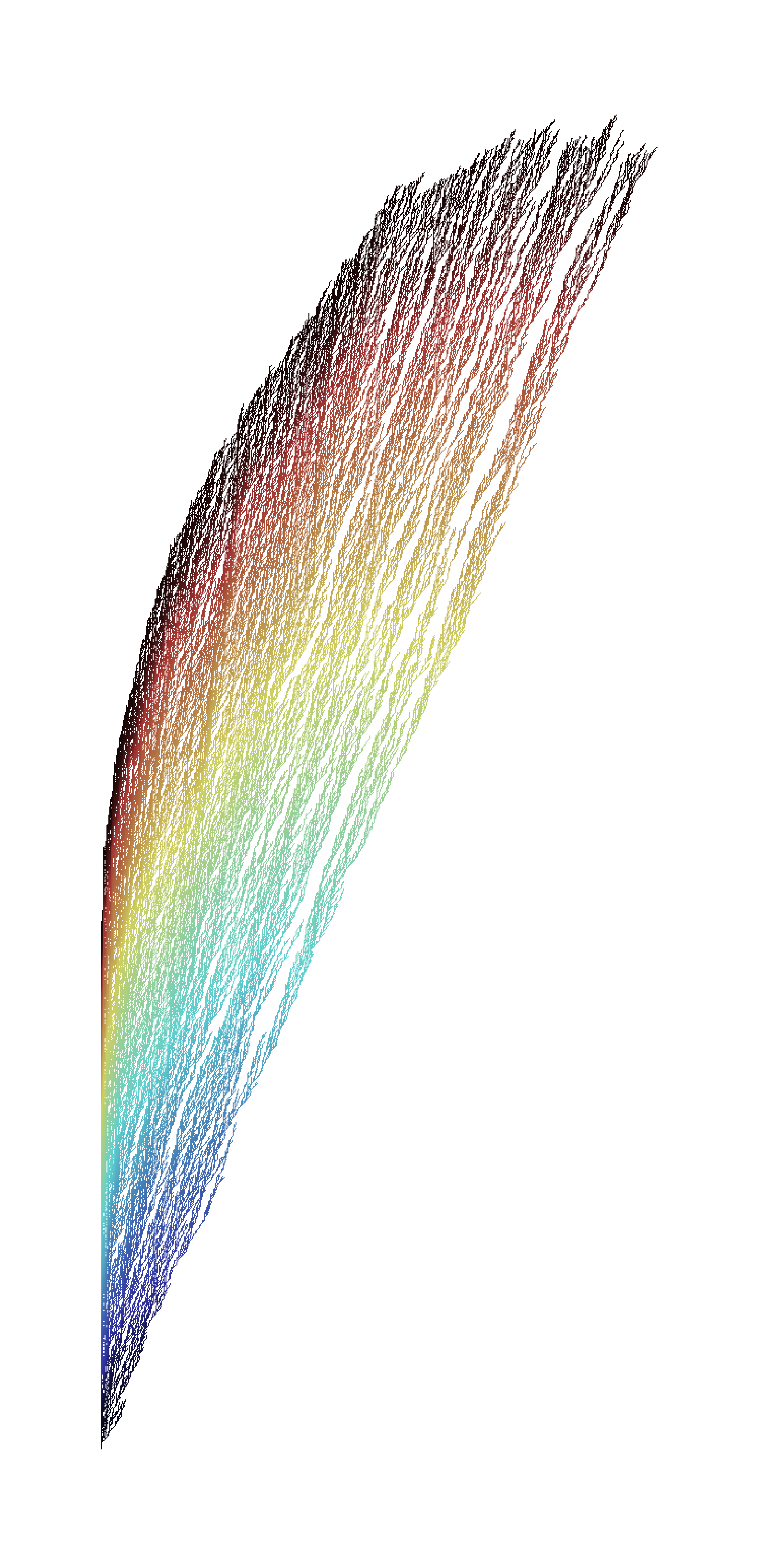}
\caption{Two realizations with seed-type initial condition at short times (left) and large times~(right). Different scales are used in each case.} \label{fig:seed} 
 \end{center}
\end{figure}

We now present the main results of this article. For clarity of the exposition, we split these results into two separate subsections: longitudinal and transversal fluctuations. 

\medskip 

\subsection{Longitudinal fluctuations} Our first main result establishes the leading order and asymptotic fluctuations of $h_G(t,k)$ for points close enough to the $t$-axis. We~direct the reader to Figures~\ref{fig:flat} and \ref{fig:seed} for an illustration.
\begin{theorem}\label{theo:1} Let $\alpha : \R_{\geq 0} \to \N$ be a given function and $h_G$ be a $\osbd$ process with initial condition $G \in \cI$. We have that:
\begin{enumerate}
    \item[i.] if $\lim_{t \to \infty} \alpha(t) = \infty$ and $\alpha(t)=o\big(t^{\frac{3}{7}}(\log t)^{-\frac{6}{7}}\big)$, then, as $t \to \infty$,  
\begin{equation}\label{eq:t1}
\frac{h_G(t,\alpha(t)) - t -2\sqrt{t \alpha(t)}}{\sqrt{t(\alpha(t))^{-\frac{1}{3}}}} \overset{d}{\longrightarrow} {\TW}_{\GUE}.
\end{equation}
\item[ii.] if $\lim_{t \to \infty} \alpha(t) = k \in \N$, then, as $t \to \infty$,
\[
\frac{h_G(t,\alpha(t)) - t }{\sqrt{t}} \overset{d}{\longrightarrow} \lambda^{\max}_k.
\]
\item [iii.] In general, under the sole condition that $\alpha(t) \log t \leq t$ for all $t$ large enough, we have 
\begin{equation}\label{eq:expansion}
h_G(t,\alpha(t))=t+2\sqrt{t\alpha(t)}+R_G(t),
\end{equation} where the error term $R_G(t)$ satisfies $R_G(t)=O\left(\max\left\{\alpha(t)\log t, \sqrt{t(\alpha(t))^{-\frac{1}{3}}}\right\}\right)$, i.e., for any $\beta: \R_{\geq 0} \to \R_{\geq 0}$ such that $\max\left\{\alpha(t)\log t, \sqrt{t(\alpha(t))^{-1/3}}\right\}=o(\beta(t))$, 
\[
\frac{\sup_{G \in \cI} |R_G(t)|}{\beta(t)} \overset{\P}{\longrightarrow} 0.
\] In particular, if $\lim_{t\to\infty}\alpha(t)=\infty$ and $\alpha(t)= o(\sqrt{t}(\log t)^{-1})$, then, as $t\to\infty$,
\begin{equation}\label{eq:subg}
\sup_{G \in \cI} \left|\frac{h_G(t,\alpha(t))-t-2\sqrt{t\alpha(t)}}{\sqrt{t}}\right|\overset{\P} {\longrightarrow} 0.
\end{equation}
\end{enumerate}
\end{theorem}

The limit in (i) of Theorem \ref{theo:1} constitutes the first instance in which a model close to ballistic deposition is shown to have features associated with the KPZ universality class at this level.
The unusual denominator $\sqrt{t(\alpha(t))^{-\frac{1}{3}}}$ is a consequence of the fact that we are considering distances in the regime $\alpha(t) = o(t^{3/7-})$ instead of $\alpha(t)=xt$ with $x>0$. If we could take $\alpha(t)=xt$ in \eqref{eq:t1} (which Theorem~\ref{theo:1} does not allow us to),  we would then recover the usual linear leading order term in \eqref{eq:KPZ} and the conjectured exponent $\chi=1/3$ for the longitudinal fluctuations. On the other hand, the limit in~\eqref{eq:subg} discards a diffusive scaling for the height function, compatible with Gaussian behavior, for $\alpha(t)=o(\sqrt t(\log t)^{-1})$. Up to our knowledge, this is yet to be confirmed for standard ballistic deposition.
Furthermore, (iii) would indicate, if one could show the error term in \eqref{eq:expansion} satisfies $R_G(t)=o(\alpha(t))$, that the following scaling limit holds:
\begin{equation}\label{eq:lln}
\lim_{n\to\infty}\frac{h_G(nt,nx)}{n}=t+2\sqrt{tx}.
\end{equation}
Nonetheless, we do not believe that proving Theorem~\ref{theo:1} is necessary to obtain \eqref{eq:lln}. 
Note the presence of a parabola in the spatial variable $x$ given by the second term $2\sqrt{tx}$ above. This is consistent with the simulations of Figures \ref{fig:flat} and \ref{fig:seed}, where a parabola can be observed for spatial distances $x$ of the same or smaller order than time $t$, i.e., $x\le t$.


In the context of last passage percolation (LPP), Bodineau and Martin~\cite{Bodineau}
and Baik and Suidan \cite{Baik} obtained independently a result similar to Theorem~\ref{theo:1} (i). The proofs of these results rely on the well-known fact that the distribution of the largest eigenvalue of a GUE matrix agrees with that of the last passage time in Brownian LPP \cite{Baryshnikov}, which in turn is close to standard LPP in the regime they study. 
More recently, the model of Brownian LPP has been studied in much more detail in \cite{Dauvergne, Basu, Basu2} (among other works), where, in particular, sharp moderate deviation estimates together with a law of fractional logarithm were obtained \cite{Basu, Basu2} and a full scaling limit of Brownian LPP towards an object known as the directed landscape was proved \cite{Dauvergne}. 
In this regard, we have the following moderate deviations result, which can be seen as the counterpart for $\osbd$ of those in \cite{Basu2,PaquetteZeitouni}. None of these estimates are necessary to obtain Theorem~\ref{theo:1}, but they will be used in the study of transversal fluctuations of $\osbd$.

\begin{theorem}\label{theo:3} Let $\alpha: \R_{\geq 0} \to \N$ satisfy $\lim_{t \to \infty} \alpha(t) = \infty$ and $\alpha(t)=o\big(t^{\frac{3}{7}}(\log t)^{-\frac{6}{7}}\big)$. Then, given $\varepsilon > 0$, there exist constants $t_\varepsilon,k_\varepsilon,\gamma_\varepsilon, x_\varepsilon > 0$ such that, for any $G \in \cI$, $t > t_\varepsilon$ and $k \in [k_\varepsilon,\alpha(t)]$, if $x \in [x_\varepsilon,\min \{ \gamma_\varepsilon k^{2/3},  (\log t)^2\}]$ we have 
\begin{equation}\label{eq:est1}
\exp\left(-\frac{4}{3}(1+\varepsilon)x^{3/2}\right)\leq \bP\left( \frac{h_G(t,k)-t - 2\sqrt{tk}}{\sqrt{tk^{-\frac{1}{3}}}} \geq x\right) \leq \exp\left( -\frac{4}{3}(1-\varepsilon)x^{3/2}\right),
\end{equation} and if $x \in [x_\varepsilon, \min \{k^{1/10},\sqrt{\log t}\}]$ we have
\begin{equation}\label{eq:est2}
\exp\left(-\frac{1}{12}(1+\varepsilon)x^3\right)\leq \bP\left( \frac{h_G(t,k)-t - 2\sqrt{tk}}{\sqrt{tk^{-\frac{1}{3}}}} \leq -x\right) \leq \exp\left( -\frac{1}{12}(1-\varepsilon)x^3\right).
\end{equation}
Furthermore, if $\alpha(t)=o\big(t^{\frac{3}{7}}(\log t)^{-\frac{24}{7}}\big)$, there exist constants $C,t^*,\gamma^*,x^* > 0$ such that, for any $G \in \cI$, $t > t^*$, $k \in [1,\alpha(t)]$ and $x \in [x^*,\min\{ \gamma^* k^{1/6}, (\log t)^2\}]$, we have, in fact, the following sharper estimate for the right tail:
\begin{equation}
\label{eq:est3}
\frac{1}{C} x^{-3/2} \rme^{-\frac{4}{3}x^{3/2}} \leq \bP\left( \frac{h_G(t,k)-t - 2\sqrt{tk}}{\sqrt{tk^{-\frac{1}{3}}}} \geq x\right) \leq C x^{-3/2} \rme^{-\frac{4}{3}x^{3/2}}.
\end{equation}
\end{theorem}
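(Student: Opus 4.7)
The plan is to reduce the problem to sharp moderate deviation estimates for the largest eigenvalue of a GUE matrix, via a Brownian last passage percolation (LPP) comparison, and then transport those estimates back to $h_G$ through the graphical construction alluded to in the introduction. The key first step is to use this construction to represent $h_G(t,k)$ as (a shift of) a directed last passage time $\PLPP_G(t,k)$ in an LPP driven by the Poisson marks $(Q^{(r)})_{r \in \N}$; this representation is monotone in the initial condition, so the sandwich $S \leq G \leq F$ reduces everything to matching bounds for $G=S$ and $G=F$. Passing from $S$ to $F$ merely enlarges the admissible set of starting points for the LPP path, and an easy comparison shows that the corresponding gap in last passage times is $o(\sqrt{t}\,k^{-1/6})$ in every regime we consider. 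It therefore suffices to prove \eqref{eq:est1}--\eqref{eq:est3} for the narrow-wedge LPP time on the thin rectangle $[0,t]\times\{1,\dots,k\}$.

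Second, in this thin-rectangle regime we couple each row of centred exponential inter-arrival times with a standard Brownian motion via a KMT-type strong approximation, producing an error of order $\log t$ per row uniformly in partial sums. Writing the discrete and Brownian last passage times as suprema over the same monotone-path structure and taking a union bound over the $k$ rows yields, with overwhelming probability,
\begin{equation*}
\lv \PLPP_S(t,k) - t - \BLPP(t,k) \rv \leq C\,k\,\log t,
\end{equation*}
where $\BLPP(t,k)$ denotes the Brownian LPP time of depth $k$ over $[0,t]$. The hypothesis $\alpha(t) \leq t^{3/7}(\log t)^{-6/7}$ is calibrated precisely so that this coupling error is absorbed into an $\varepsilon$-fraction of the moderate deviation scale $\sqrt{t}\,k^{-1/6}\,x$ as long as $x \leq (\log t)^2$, while the stronger hypothesis $\alpha(t) \leq t^{3/7}(\log t)^{-24/7}$ provides the additional polylogarithmic room needed to preserve the polynomial prefactor in \eqref{eq:est3}.

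Third, by Baryshnikov's identity \cite{Baryshnikov} one has $\BLPP(t,k) \overset{d}{=} \sqrt{t}\,\lambda_k^{\max}$, and the bounds \eqref{eq:est1}--\eqref{eq:est3} then follow, after transport through the coupling of the previous step, from the corresponding moderate deviation estimates for $\lambda_k^{\max}$. The two-sided tails \eqref{eq:est1}--\eqref{eq:est2}, with the GUE Tracy-Widom constants $\tfrac{4}{3}$ and $\tfrac{1}{12}$, are by now classical and may be read off from the sharp estimates of Ledoux--Rider and \cite{PaquetteZeitouni}; the sharper right tail \eqref{eq:est3}, with its prefactor $x^{-3/2}$, corresponds to the one-point Airy${}_2$ tail asymptotic, which admits the required deterministic counterpart for $\lambda_k^{\max}$ in the range $x \in [x^*, \gamma^* k^{1/6}]$.

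The main obstacle is the delicate interplay between the three scales at play---the KMT coupling error, the moderate deviation window for $x$, and the GUE dimension $k$---and in particular verifying, for \eqref{eq:est3}, that the coupling error is smaller than the polynomial prefactor $x^{-3/2}\rme^{-\tfrac{4}{3}x^{3/2}}$ itself rather than merely than its exponential part; this is exactly what forces the slightly stronger hypothesis on $\alpha(t)$ in that part of the theorem, and is the most careful piece of bookkeeping in the argument.
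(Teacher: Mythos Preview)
Your proposal is correct and follows essentially the same route as the paper: couple $h_G$ to Brownian LPP via the graphical LPP representation plus a KMT-type approximation (packaged in the paper as Theorem~\ref{theo:2}), then transfer known moderate deviation estimates for $\lambda_k^{\max}$ through the coupling, with the choice $\varepsilon_t=(\log t)^{-3}$ for \eqref{eq:est3}. The only cosmetic differences are that the paper invokes the coupling as a black box (Theorem~\ref{theo:2}) rather than unpacking it, and it cites \cite{Basu} (Propositions~1.5 and~1.8) for \eqref{eq:est1}--\eqref{eq:est2} and \cite[Lemma~7.3]{PaquetteZeitouni} for \eqref{eq:est3}, rather than Ledoux--Rider.
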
 

To prove Theorem~\ref{theo:1} and Theorem~\ref{theo:3}, we exploit an alternative representation of $\osbd$ in terms of an LPP-like process,
{which reduces} our problem to that of studying the fluctuations in a directed LPP processes similar (but different) to the one considered in \cite{Bodineau, Baik}. To continue stating our other main results, it will be convenient to introduce this alternative representation of $\osbd$ now.

Given a sequence $Y=(Y^{(r)})_{r \in \N}$ of independent Poisson processes $Y^{(r)}=(Y^{(r)}_t)_{t \geq 0}$ of rate $1$, we write $u \in Y^{(r)}$ to indicate that $u \in \R_{\geq 0}$ is a discontinuity point of $Y^{(r)}$, i.e., $Y^{(r)}_u \neq Y^{(r)}_{u^-}$, and, for $t > 0$ and $k \in \N$, let us write $\cU(t,k)$ to denote the space of all c\` adl\`ag increasing paths starting at $1$ that have at most $k$ jumps on $[0,t]$, all of them with size $+1$, which can only jump from a value $r$ to $r+1$ at a given time $s$ if $s$ is a mark of $Y^{(r)}$ (see \eqref{eq:defu} for a precise definition). 
Then, for any initial condition $G:\N \to [-\infty,\infty)$ define the LPP-like process $h^\circ_G:=(h^\circ_G(t,k) : t \geq 0\,,\,k \in \N)$ by the formula
\begin{equation}\label{eq:defhcirc}
h^\circ_G(t,k):= \sup_{u \in \cU(t,k)} [H(u,Y)+G(k-u(t)+1)],
\end{equation} where $H(u,Y)$ denotes the total amount of marks belonging to $Y=(Y^{(r)})_{r \in \N}$ which are contained in the (closure of) the graph of $u$ on $[0,t]$ (see \eqref{eq:defh} for a precise definition).

We will show in {Lemma~\ref{lemma:1}} below that, for any fixed $t > 0$ and $k \in \N$, we have that
\begin{equation}\label{eq:defhcirc2}
h_G(t,k) \overset{d}{=} h_G^\circ(t,k).
\end{equation}
Analogous representations have been used previously in \cite{Santi, Khanin, Cannizzaro, Sudijono} in various contexts. In particular, it is not exclusive to $\osbd$ and holds also for standard (two-sided) ballistic deposition, see \cite{Santi}. However, so far we have not been able to produce an effective comparison between standard ballistic deposition and a Brownian LPP-like process, which is why we consider here one-sided ballistic deposition instead.

It is this equality in distribution that will allow us to obtain Theorem~\ref{theo:1} by studying instead the process $h_G^\circ$ using LPP techniques. However, it is worth pointing out that, while their marginal distributions coincide, $h_G$ and $h_G^\circ$ do not have the same distribution as processes in general. Indeed, this will be evident from the proof of Lemma~\ref{lemma:1} but, in addition, can be seen from the fact that $h_F^\circ(t,k) \leq h_F^\circ(t,k+1)$ for all $t > 0$ and $k \in \N$ by mere definition, whereas this is not true for $h_F$: the inequality $h_F(t,k) \leq h_F(t,k+1)$ does hold but only in a distributional sense, not almost surely, as evidenced in Figure~\ref{fig:flat}.

\subsection{Transversal fluctuations} \label{sec:transversal} Our final result characterizes the order of fluctuations of optimizing paths for $h_G^\circ$. Before we can state it, we need some definitions.

For $\gamma,t > 0$ and $k \in \N$, we define the cylinder 
\[
    C^\gamma(t,k) := \left\{ (s,n) \in [0, t] \times \N : \left| n - \frac{k}{t}s\right| \leq k^\gamma \right\},
\] together with, for $G \in \cI$, the set $\Pi_G(t,k)$ of \textit{geodesics} for $h_G^\circ(t,k)$ as
\[
    \Pi_G(t,k) := \left\{ u \in \cU(t,k) :  H(u,Y) + G(k-u(t)+1) = h^\circ_G(t,k)\right\},
\] where $\cU(t,k)$ and $H(u,Y)$ are as in \eqref{eq:defhcirc}. Next, let us define the event $A_G^\gamma(t,k)$ in which all geodesics for $h^\circ_G(t,k)$ are contained in $C^\gamma(t,k)$, i.e.
\begin{equation}\label{eq:defa}
A_G^\gamma(t,k) := \{ \text{graph}(u) \in C^\gamma(t,k) \text{ for all }u \in \Pi_G(t,k)\},
\end{equation} where, for any $u \in \cU(t,k)$, we write $\text{graph}(u):=\{(s,u(s)) : s \in [0,t]\}$ to denote the graph of $u$. Similarly, for each $s \in (0,1]$ we define the event
\begin{equation}\label{eq:defb}
 B_G^\gamma(t,k;s):=\{ \exists \,u \in \Pi_G(t,k) : |u(st) - ks| \leq k^\gamma \}
\end{equation} in which at least one of the geodesics for $h^\circ_G(t,k)$ has at time $t'=st$ a local deviation from the straight line $x=\frac{k}{t}t'$  which is less or equal than $k^\gamma$. 
Finally, for any $\alpha : \R_{\geq 0} \to \N$, we define the \textit{global (upper) transversal fluctuations exponent} as
\[
\xi_G(\alpha):=\inf \{ \gamma > 0 : \liminf_{t \to \infty} \bP(A_G^\gamma(t,\alpha(t)))=1\}.
\] and, for $s \in (0,1]$, the \textit{local (lower) transversal fluctuations exponent} as
\[
\xi_G(\alpha;s):= \inf \{ \gamma > 0 : \limsup_{t \to \infty} \bP(B_G^\gamma(t,\alpha(t);s))=1\}.
\] Our third and final main result is then the following.

{\begin{theorem}\label{theo:4} Let $\alpha: \R_{\geq 0} \to \N$ satisfy $\lim_{t \to \infty} \alpha(t)=\infty$ and $\alpha(t)=o\big(t^{3/7}(\log t)^{-6/7}\big)$. Then, for any $G \in \cI$ and $s \in (0,1)$, we have
\[
\frac{2}{3}\leq \xi_G(\alpha;s) \leq \xi_G(\alpha).
\] Furthermore, if in addition $\alpha$ satisfies $(\log(t))^\rho=o(\alpha(t))$ for all $\rho>0$ and $\alpha(t)=o\big(t^\eta\big)$ for some $\eta < \frac{9}{31}$, then we also have $\xi_G(\alpha) \leq \frac{2}{3}$, so that 
\[
\xi_G(\alpha;s)=\frac{2}{3} = \xi_G(\alpha).
\]
\end{theorem}}

Notice that, in particular, Theorem~\ref{theo:4} implies, whenever $\alpha(t)=\lfloor t^\eta\rfloor $ with $\eta \in (0,\frac{9}{31})$, that:
\begin{itemize}
    \item [$\bullet$] if $\gamma > \frac{2}{3}$, then, with probability tending to $1$ as $t \to \infty$, all geodesics for $h^\circ_G(t,\alpha(t))$ are contained in the cylinder $C^\gamma(t,\alpha(t))$;
    \item [$\bullet$] if $\gamma < \frac{2}{3}$, then, for any $s \in (0,1)$, with probability bounded away from $0$ as $t \to \infty$, all geodesics for $h^\circ_G(t,\alpha(t))$ lie outside $C^\gamma(t,\alpha(t))$ at time $t'=st$ (so that, in particular, no geodesic for $h^\circ_G(t,\alpha(t))$ is contained in $C^\gamma(t,\alpha(t))$). 
\end{itemize}

{The reason why we require the stronger condition $\alpha(t)=o(t^\eta)$ to show that $\xi_G(\alpha)\leq \frac{2}{3}$ is that, if we only assume that $\alpha(t)=o\big(t^{3/7}(\log t)^{-6/7}\big)$, then with our current methods we cannot rule out the possibility that geodesics escape the cylinder $C^\gamma(t,\alpha(t))$ near either of their two endpoints. This is because Theorem~\ref{theo:3} holds only for curves $\alpha$ which grow much slower than linearly in $t$. If one could improve the range of $\alpha$ for which Theorem~\ref{theo:2} holds, then this would allow us to take larger values of $\eta$ in Theorem~\ref{theo:4}.}

We point out that, even if the notion of geodesic has been defined (so far) for $h^\circ_G$ and not the original process $h_G$, for each $t > 0$ and $k \in \N$ there exists a notion of geodesics for $h_G(t,k)$ which are in direct correspondence with those in the set $\Pi_G(t,k)$ (see \eqref{eq:rep1}). In particular, the analogue of Theorem~\ref{theo:4} for geodesics in $h_G$ holds as well. However, geodesics for $h_G$ are not as straightforward to define as for $h_G^\circ$ and, thus, for this reason we opted to state Theorem~\ref{theo:4} for $h_G^\circ$ rather than $h_G$.

\medskip

{\noindent \textbf{Organization of the paper}. The paper is organized as follows. Section~\ref{sec:outline} contains the outlines of the proofs of Theorems~\ref{theo:1}--\ref{theo:3}, and Section~\ref{sec:outline2} that of Theorem~\ref{theo:4}. Then, Section~\ref{preliminaries.long.fluct} contains all preliminary results needed to prove Theorems~\ref{theo:1} and \ref{theo:3}, while Section~\ref{sec:prel2} contains those for Theorem~\ref{theo:4}. Finally, Section~\ref{sec:proofoftheo1} contains the proof of Theorem~\ref{theo:1},  Section~\ref{sec:proofoftheo3} presents the proof of Theorem~\ref{theo:3} building on previous~work and Section~\ref{sec:proofoftheo4} gives the proof of Theorem~\ref{theo:4}.} The reader interested solely in the proof of Theorem \ref{theo:1} may concentrate on Sections \ref{sec:outline}, \ref{preliminaries.long.fluct} and \ref{sec:proofoftheo1} and skip the other sections since they are not necessary to complete its proof.

\section{Outline of the proofs of Theorem~\ref{theo:1} and Theorem~\ref{theo:3}}\label{sec:outline}

Our strategy to prove Theorems~\ref{theo:1}--\ref{theo:3} is to compare the process $h_G$ with Brownian LPP for which analogous asymptotics have already been derived in \cite{Glynn,Baryshnikov, Basu}. The strategy of comparing a model to Brownian LPP to obtain GUE-type fluctuations, or even other statistics of the process, was previously implemented 
in \cite{Bodineau, Baik, Suidan} in the context of standard LPP. Our method of proof bears some similarities with these works, although additional work is required in our setting due to the differences between our model and standard LPP.

To formally define Brownian LPP, consider a sequence $B=(B^{(r)})_{r \in \N}$ of independent standard Brownian motions $B^{(r)}=(B^{(r)}_t)_{t \geq 0}$ and, for $t > 0$ and $k \in \N$, define the space of \textit{directed paths on $[0,t]$ ending at most at $k$} as
\begin{equation}\label{eq:defvtk}
\cV(t,k):=\{ v : [0,t] \to \N \,|\, v \text{ c\`adl\`ag increasing}\,,\,v(t)\leq k\}.
\end{equation}
The {\em Brownian LPP} model is then defined as $\BLPP=(\BLPP(t,k) : t > 0\,,\,k \in \N)$, where
\begin{equation}\label{eq:defl}
\BLPP(t,k):= \sup_{v \in \cV(t,k)} H(v,B),
\end{equation} and, for any sequence $\mathcal{F}=(f^{(r)})_{r \in \N}$ of c\`adl\`ag functions $f^{(r)}=(f^{(r)}_t)_{t \geq 0}$ and $v \in \cV(t,k)$, we define  
\begin{equation}\label{eq:defh}
H(v,\mathcal{F})
:=\int_0^t \mathrm{d}f^{(v(s))}_s := \sum_{r=1}^k f^{(r)}_{v_r}-f^{(r)}_{v_{r-1}},
\end{equation} where, for $r=0,\dots,k$, we write $v_r:=\inf \{ s \in [0,t] : v(s) > r\} \wedge t$, with the convention that $\inf \emptyset:=\infty$ (used whenever $v(t)<k$ and also so that we always have that $v_k=t$). We point out that, by the continuity of Brownian paths, maximizing over all of $\cV(t,k)$ coincides with maximizing only over paths in $\cV(t,k)$ such that $v(0)=1$ and $v(t)=k$. While the latter option yields the usual definition of Brownian LPP, see e.g. \cite{Dauvergne}, our definition is more convenient in the proofs to follow.
Furthermore, notice that any $v \in \cV(t,k)$ can be (essentially) reconstructed from the vector $(v_0,\dots,v_k)$. Indeed, given $s \in [0,t)$, we have $v(s)=\inf\{ r : v_r > s\} \wedge k$. The precise value of $v(t)$ cannot be recovered as it is not possible to determine by looking at $(v_0,\dots,v_k)$ whether $v$ has jumped at time $t$ or not, but this does not affect the value of $H(v,\mathcal{F})$ and is therefore irrelevant. For this reason,
in the sequel we will view elements of $\cV(t,k)$ either as paths as in \eqref{eq:defvtk} or as vectors $(v_0,\dots,v_k) \in \R^{k+1}$ such that $0=v_0 \leq v_1 \leq \dots \leq v_k=t$, choosing in each occasion whichever option is more convenient.

It is well known (see \cite{Baryshnikov}) that, for any $k \in \N$, $\BLPP(1,k)$ is distributed as $\lambda^{\max}_k$, the largest eigenvalue of a $k \times k$ GUE random matrix. Moreover, by Brownian scaling $\BLPP(t,k)$, has the same law as $\sqrt{t}\BLPP(1,k)$ for any $t > 0$. Combining these facts with the asymptotics as $k \to \infty$ for $\lambda^{\max}_k$ (see, e.g., \cite{TracyCMP, Forrester}),
\[
k^{\frac{1}{6}}(\lambda^{\max}_k - 2\sqrt{k}) \overset{d}{\longrightarrow} \TW_{\GUE},
\] one obtains the following result, whose first item can already be found in \cite{Bodineau, Baik}.

\begin{theorem}[\cite{TracyCMP}]\label{theo:DBP} For $\alpha: \R_{\geq 0} \to \N$, we have:
\begin{enumerate}
    \item [A.] if $\lim_{t \to \infty} \alpha(t)=\infty$, then, as $t \to \infty$,
\[
\frac{\BLPP(t,\alpha(t))-2\sqrt{t\alpha(t)}}{\sqrt{t(\alpha(t))^{-\frac{1}{3}}}} \overset{d}{\longrightarrow} \TW_{\GUE};
\]   
\item [B.] if $\lim_{t \to \infty} \alpha(t)=k \in \N$, then, as $t \to \infty$,
    \[
    \frac{\BLPP(t,\alpha(t))}{\sqrt{t}} \overset{d}{\longrightarrow} \lambda^{\max}_k.
    \]
\end{enumerate}
\end{theorem}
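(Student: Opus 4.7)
The plan is to derive both statements as an essentially immediate corollary of the three ingredients recalled in the paragraph preceding the theorem: Baryshnikov's identity $\BLPP(1,k)\overset{d}{=}\lambda^{\max}_k$, the Brownian scaling $\BLPP(t,k)\overset{d}{=}\sqrt{t}\,\BLPP(1,k)$ (itself a direct consequence of the self-similarity of Brownian motion and the variational representation \eqref{eq:defl}--\eqref{eq:defh}), and the Tracy--Widom asymptotic $k^{1/6}(\lambda^{\max}_k-2\sqrt{k})\overset{d}{\longrightarrow}\TW_{\GUE}$. Combining the first two yields, for every fixed $t>0$ and $k\in\N$, the distributional identity
\[
\frac{\BLPP(t,k)-2\sqrt{tk}}{\sqrt{t}\,k^{-1/6}}\overset{d}{=}k^{1/6}\bigl(\lambda^{\max}_k-2\sqrt{k}\bigr),
\]
which reduces everything to understanding the right-hand side as the index varies.

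For part B, I would first observe that, because $\alpha$ is $\N$-valued and $\alpha(t)\to k$, one in fact has $\alpha(t)=k$ for all $t$ sufficiently large; the identity above then gives $\BLPP(t,\alpha(t))/\sqrt{t}\overset{d}{=}\lambda^{\max}_k$ eventually, and the claim is immediate.

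For part A, the same identity recasts the normalised fluctuation on the left-hand side of the stated limit as $(\alpha(t))^{1/6}(\lambda^{\max}_{\alpha(t)}-2\sqrt{\alpha(t)})$ in distribution, and the only remaining task is to pass the Tracy--Widom limit through the diverging index $\alpha(t)$. I would handle this with a standard soft argument: since $\TW_{\GUE}$ has a continuous distribution function, convergence in distribution of $Y_k := k^{1/6}(\lambda^{\max}_k-2\sqrt{k})$ along $k\to\infty$ through $\N$ implies convergence in distribution of $Y_{\alpha(t)}$ along any integer-valued function $\alpha(t)\to\infty$, as one sees by a pointwise CDF comparison at each real~$x$.

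No substantive obstacle is anticipated: the whole statement is essentially a bookkeeping step on top of the classical Baryshnikov--Tracy--Widom theory, and the only point requiring a sentence of care is the transfer of the deterministic-$k$ Tracy--Widom limit to the functional index $\alpha(t)$, which is a routine consequence of the continuity of $F_{\TW_{\GUE}}$.
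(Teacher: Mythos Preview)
Your proposal is correct and matches the paper's approach exactly: the paper does not give a separate proof of this theorem, but the paragraph immediately preceding its statement sketches precisely the argument you wrote out---combine Baryshnikov's identity $\BLPP(1,k)\overset{d}{=}\lambda^{\max}_k$, the Brownian scaling $\BLPP(t,k)\overset{d}{=}\sqrt{t}\,\BLPP(1,k)$, and the Tracy--Widom limit for $\lambda^{\max}_k$. Your added sentence on transferring the limit from deterministic $k$ to the integer-valued $\alpha(t)$ via continuity of $F_{\TW_{\GUE}}$ is a minor elaboration the paper leaves implicit.
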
 

In light of Theorem~\ref{theo:DBP}, we can obtain Theorem~\ref{theo:1} and Theorem~\ref{theo:3} from the former once the following comparison result is established.

\begin{theorem}\label{theo:2} There exist constants $c_1,c_2,t^* > 0$ such that, given any $t>t^*$ and $k \leq \frac{t}{\log t}$, there exists a coupling of $(h_G(t,k) : G \in \cI)$ and $\BLPP(t,k)$ such that, for any~$x > 0$,
\begin{equation}\label{eq:comp}
\bP\left( \sup_{G \in \cI} |h_G(t,k) - t - \BLPP(t,k)| > x \right) \leq \rme^{c_1 \log t - c_2\frac{x}{k}} + \rme^{-\frac{1}{2}k \log t}.
\end{equation} 
\end{theorem}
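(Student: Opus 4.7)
The plan is to identify $h_G(t,k) - t$ with a last passage process driven by compensated Poisson processes, and then to couple this Poisson LPP to the Brownian LPP $\BLPP(t,k)$ via a levelwise KMT strong approximation.

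First, I would derive an LPP representation for $h_G$ by iterating the recursion~\eqref{eq:definition.osbd}. The key observation is that the maximum in \eqref{eq:definition.osbd} forces the height at time $t$ and level $k$ to equal, up to an initial-condition term, the largest count of Poisson jumps that can be collected along a directed path $v \in \cV(t,k)$. More precisely, one should obtain (with minor care needed for the $+1$ contributions of blocks sticking to higher neighbours) an identity of the form
\[
h_G(t,k) \;=\; \sup_{v \in \cV(t,k)} \bigl\{ G(v(0)) + N(v,Q) \bigr\},
\]
where $N(v,Q) = \sum_{r=1}^{k} (Q^{(r)}_{v_r} - Q^{(r)}_{v_{r-1}})$ counts the Poisson jumps along the levels traversed by $v$. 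Subtracting $t = \sum_r (v_r - v_{r-1})$ turns each summand into an increment of the compensated process $\widetilde{Q}^{(r)}_s := Q^{(r)}_s - s$, giving
\[
h_G(t,k) - t \;=\; \sup_{v \in \cV(t,k)} \bigl\{ G(v(0)) + H(v,\widetilde{Q}) \bigr\},
\]
with $H$ defined exactly as in~\eqref{eq:defh}. Because both sides are monotone nondecreasing in $G$, it is enough to prove~\eqref{eq:comp} for the two extreme initial conditions $G=S$ and $G=F$, and the contribution $G(v(0))$ is $O(1)$ uniformly in $G \in \cI$.

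The second ingredient is the Koml\'os--Major--Tusn\'ady strong approximation, applied independently to each compensated process $\widetilde{Q}^{(r)}$. It produces a coupling to a standard Brownian motion $B^{(r)}$ such that, for absolute constants $C_1,C_2,C_3 > 0$,
\[
\bP\!\left( \sup_{s \in [0,t]} \bigl| \widetilde{Q}^{(r)}_s - B^{(r)}_s \bigr| > C_1 \log t + y \right) \leq C_2 \, \rme^{-C_3 y}, \qquad y > 0.
\]
Writing $M_t^{(k)} := \max_{1 \leq r \leq k} \sup_{s \in [0,t]} |\widetilde{Q}^{(r)}_s - B^{(r)}_s|$, the telescoping form of $H$ in~\eqref{eq:defh} immediately yields $|H(v,\widetilde{Q}) - H(v,B)| \leq 2k\,M_t^{(k)}$ for every $v \in \cV(t,k)$. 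Using the elementary inequality $|\sup f_1 - \sup f_2| \leq \sup |f_1 - f_2|$ together with the Brownian LPP definition~\eqref{eq:defl}, I obtain the deterministic pathwise bound
\[
\sup_{G \in \cI} \bigl| h_G(t,k) - t - \BLPP(t,k) \bigr| \leq 2k\,M_t^{(k)} + O(1).
\]

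The first exponential in~\eqref{eq:comp} then follows by a union bound over $r \in \{1,\dots,k\}$ with $y = x/(4k) - C_1 \log t$: the KMT tail yields $\bP(M_t^{(k)} > C_1 \log t + y) \leq k C_2 \rme^{-C_3 y}$, and the prefactor $k \leq t/\log t$ is absorbed into the $\log t$ coefficient, producing $\rme^{c_1 \log t - c_2 x/k}$. The second term $\rme^{-k \log t /2} = t^{-k/2}$ I expect to absorb an auxiliary bad event outside of which the LPP identification and the KMT coupling are valid uniformly in $G$; by independence across the $k$ Poisson processes, a union of levelwise small-probability events contributes precisely a factor of $t^{-k/2}$.

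I expect the principal technical obstacle to be the LPP identification itself: carefully accounting for the $+1$ contribution of every block in~\eqref{eq:definition.osbd}, for the freedom in the starting column $v(0)$ permitted by the initial condition $G$, and for the role of simultaneous right- and left-limits in~\eqref{eq:definition.osbd}, so that the representation pairs \emph{exactly}, level by level, with the sums $H(v,B)$ used in the Brownian LPP~\eqref{eq:defl}. Once this algebraic identification is clean, the probabilistic content reduces to the levelwise KMT approximation plus a union bound as sketched above.
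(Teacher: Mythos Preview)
Your KMT step (the passage from Poisson LPP over $\cV(t,k)$ to Brownian LPP) is essentially correct and matches the paper's Proposition~\ref{prop:3}. The gap lies earlier, in the LPP representation you assert. Iterating the recursion~\eqref{eq:definition.osbd} does \emph{not} produce a supremum over all of $\cV(t,k)$: because a block at level $k+1$ can only inherit the height of level $k$ at a mark $t \in Q^{(k+1)}$, the admissible paths must jump \emph{only at Poisson marks} and only by $+1$. The correct representation (Lemma~\ref{lemma:1} in the paper) is therefore over the strictly smaller set $\cU(t,k)\subsetneq\cV(t,k)$, and closing the gap between $\sup_{u\in\cU(t,k)}H(u,Y)$ and $\sup_{v\in\cV(t,k)}H(v,Y)$ is a separate, nontrivial step (the paper's Proposition~\ref{prop:2}): for each $v\in\cV(t,k)$ one constructs a nearby $u\in\cU(t,k)$ and controls, via Lemma~\ref{lemma:3}, the Poisson mass in the short random intervals where the two paths differ. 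Your identity with $\cV(t,k)$ in place of $\cU(t,k)$ is simply false, and without it the deterministic bound $|H(v,\widetilde Q)-H(v,B)|\le 2kM_t^{(k)}$ does not connect $h_G$ to $\BLPP$.

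The second issue is your claim that the initial-condition term is $O(1)$ uniformly in $G\in\cI$. For the seed $G=S$ one has $S(j)=-\infty$ for $j>1$, which is not an $O(1)$ perturbation of the supremum but a hard constraint forcing the optimizing path to reach level $k$. Showing that $h_F(t,k)-h_S(t,k)$ is small is precisely where the term $\rme^{-\frac{1}{2}k\log t}$ enters: it bounds the probability that \emph{no} path in $\cU(t,k)$ reaches level $k$, via a Chernoff bound on a rate-$1$ Poisson process having fewer than $k-1$ jumps in time $k\log t$ (this is Proposition~\ref{prop:1}). Your sketch offers no mechanism that would generate this specific term. In short, the paper's three-step decomposition (Propositions~\ref{prop:1}, \ref{prop:2}, \ref{prop:3}) is not cosmetic; the first two steps carry real content that your one-step route has collapsed away.
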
 

In particular, as a consequence of the above result we obtain the following corollary, whose proof is straightforward and is thus omitted.

\begin{corollary}\label{cor:0} Given $\alpha: \R_{\geq 0} \to \N$ such that $\alpha(t)\log t \leq t$ for all $t$ large enough, under the coupling of Theorem~\ref{theo:2} we have that, as $t \to \infty$, 
\begin{equation}\label{eq:cotacor}
\frac{\sup_{G \in \cI} |h_G(t,\alpha(t)) - t - \BLPP(t,\alpha(t))|}{\beta(t)} \overset{\bP}{\longrightarrow} 0
\end{equation} for any $\beta: \R_{\geq 0} \to \R_{\geq 0}$ such that $\alpha(t)\log t = o(\beta(t))$.
\end{corollary}

In particular, it follows from Corollary~\ref{cor:0} that:
\begin{itemize}
\item [a)] if $\lim_{t \to \infty} \alpha(t) = \infty$ and $\alpha(t)=o\big(t^{\frac{3}{7}}(\log t)^{-\frac{6}{7}}\big)$, then $\alpha(t)\log t =o\big(\sqrt{t}(\alpha(t))^{-\frac{1}{6}}\big)$ and therefore, as $t \to \infty$,
\[
\frac{h_G(t,\alpha(t)) - t - \BLPP(t,\alpha(t))}{\sqrt{t(\alpha(t))^{-\frac{1}{3}}}} \overset{\bP}{\longrightarrow} 0;
\]
\item [b)] if $\lim_{t \to \infty} \alpha(t) = k \in \N$, then $\alpha(t) \log t =o(\sqrt{t})$ and therefore, as $t \to \infty$,
\[
\frac{h_G(t,\alpha(t)) - t - \BLPP(t,\alpha(t))}{\sqrt{t}} \overset{\bP}{\longrightarrow} 0.
\]
\end{itemize} 
Combining (a) with (A) and (b) with (B)
gives (i) and (ii) respectively in Theorem~\ref{theo:1} at once, while (iii) in Theorem~\ref{theo:1} follows from combining (A) with \eqref{eq:cotacor}. {Similarly, Theorem~\ref{theo:3} follows from combining \eqref{eq:comp} with the corresponding moderate deviation estimates for Brownian LPP found in \cite{Basu,PaquetteZeitouni}.}

We prove Theorem~\ref{theo:2} by comparing our $\osbd$ process with Brownian LPP in three subsequent steps. First, we show that all $\osbd$ processes with initial conditions in $\cI$  are close enough to each other in the appropriate scale.

\begin{proposition}\label{prop:1} There exist constants $c_1,c_2,t^* > 0$ such that, given any $t>t^*$ and $k \leq \frac{t}{\log t}$, there exists a coupling of the random variables $(h_G(t,k) : G \in \cI)$ such~that, for all $x > 0$,
\begin{equation}\label{eq:c1}
\bP\left( \sup_{G \in \cI} |h_G(t,k)-h_F(t,k)| > x \right) \leq \rme^{c_1 k \log t - c_2 x} + \rme^{-\frac{1}{2}k \log t}.
\end{equation}
\end{proposition}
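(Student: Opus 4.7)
The plan is to use the same Poisson processes $(Q^{(r)})_{r\in\N}$ to couple all the $h_G$ with $G\in\cI$. By the monotonicity of~\eqref{eq:definition.osbd} in the initial data, a direct consequence of the graphical construction, this coupling yields $h_S(t,k)\le h_G(t,k)\le h_F(t,k)$ pointwise. Hence $\sup_{G\in\cI}|h_G(t,k)-h_F(t,k)|=h_F(t,k)-h_S(t,k)=:D_k(t)$, and the problem reduces to a tail bound on the nonnegative random variable $D_k(t)$.

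The central tool is a one-step recursion. At any mark $\tau$ of $Q^{(k)}$, applying~\eqref{eq:definition.osbd} to both $h_F$ and $h_S$ and using the elementary inequality $\max(a,b)-\max(c,d)\le\max(a-c,b-d)$ (valid since $h_F\ge h_S$ componentwise), one gets $D_k(\tau)\le\max(D_{k-1}(\tau^-),D_k(\tau^-))$. Define the \emph{infection times} $T_1:=0$ and $T_j:=\min\{s>T_{j-1}:\,s\in Q^{(j)}\}$; by the memoryless property, $T_k$ is distributed as a $\mathrm{Gamma}(k-1,1)$ variable, and $h_S(T_j,j)$ is the first finite value attained by the seed process at column $j$. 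Iterating the max-inequality across all marks of $Q^{(k)}$ in $[T_k,t]$, with $D_1\equiv 0$ as the base case of a strong induction on $k$, should give
\[
\sup_{s\in[T_k,t]} D_k(s)\ \le\ \max_{1\le j\le k}\, h_F(T_j^-,j),
\] where the bound at the boundary $\tau=T_k$ comes from $D_k(T_k)\le\max(D_{k-1}(T_k^-),\,h_F(T_k^-,k))$ (using $h_S(T_k^-,k)=-\infty$ and $h_S(T_k^-,k-1)\ge 0$).

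It will then remain to split
\[
\bP\bigl(D_k(t)>x\bigr)\ \le\ \bP(T_k>t_0)+\bP\Bigl(\max_{j\le k}\, h_F(T_j^-,j)>x,\ T_k\le t_0\Bigr)
\] for $t_0:=Ck\log t$. A Chernoff bound for $\mathrm{Gamma}(k-1,1)$ gives the first term $\le \rme^{-k\log t/2}$ once $C$ is large, using the hypothesis $k\log t\le t$; this produces the second summand in~\eqref{eq:c1}. For the second term, on $\{T_k\le t_0\}$ a preliminary bound of the form $h_F(t,k)\le C_0(t+\sqrt{tk})$ holding with probability $\ge 1-\rme^{-\Theta(k\log t)}$ (derivable from the LPP representation and Poisson concentration developed in Section~\ref{preliminaries}) yields $\max_j h_F(T_j^-,j)\lesssim k\log t$ plus an exponentially-decaying remainder; a union bound over $j\in\{1,\dots,k\}$ combined with a Poisson Chernoff bound then produces the first summand $\rme^{c_1 k\log t-c_2 x}$.

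The hardest part will be securing the $k\log t$ scaling (rather than $k^2\log t$) in the exponent. The naive inequality $h_F(t,k)\le \sum_{r=1}^k Q^{(r)}_t$, provable by a one-line induction on marks directly from~\eqref{eq:definition.osbd}, has Poisson intensity $kt$ and would only yield $\rme^{c_1 k^2\log t-c_2 x}$; achieving the sharper $k\log t$ requires exploiting the fact that the weight along any single path in the LPP representation is a Poisson variable of total intensity only $t$. The correct quantitative bound on $h_F(T_k,k)$ rests on this path-refined analysis, which I expect to be the technical core of the proof.
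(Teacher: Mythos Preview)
Your approach is correct and takes a different route from the paper's. The paper never touches the forward recursion: it works entirely inside the time-reversed LPP representation $\widehat h_G$ of Lemma~\ref{lemma:1}, fixes the window $[\tau_0,t]$ with $\tau_0=t-k\log t$, builds a greedy path $\widehat u\in\cU(t,k)$ reaching column $k$ by spending one mark per column inside that window (this succeeds on $\{W_{k\log t}\ge k-1\}$, whose complement has probability $\le\rme^{-\frac12 k\log t}$), and then, for an arbitrary $u\in\cU(t,k)$, splices $u|_{[0,\tau_0)}$ onto $\widehat u$ to obtain $u'$ with $u'(t)=k$; the cost $H(u,Y)-H(u',Y)$ is dominated pathwise by $\PLPP([\tau_0,t],k)\overset{d}{=}\PLPP(k\log t,k)$, and Lemma~\ref{lemma:3} finishes. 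You instead exploit the forward dynamics: the max-contraction $D_k(\tau)\le\max(D_{k-1}(\tau^-),D_k(\tau^-))$ unrolls by strong induction into the neat pathwise bound $D_k(t)\le\max_{j\le k}h_F(T_j^-,j)$ on $\{T_k\le t\}$. Your Gamma tail for $T_k$ is the Poisson--Gamma dual of the paper's bound on $\{W_{k\log t}<k-1\}$, and your control of each $h_F(t_0,j)$ uses the same Lemma~\ref{lemma:3} (via $\bP(h_F(t_0,j)>x)\le\bP(\PLPP(t_0,j)>x)$, which is Lemma~\ref{lemma:1} together with $\cU\subseteq\cV$) followed by a union bound over $j$, costing only a harmless extra factor of $k$. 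The paper's path-splicing is slightly slicker (no induction on $k$, no union bound); your argument has the virtue of staying close to the model's definition and isolating the contraction inequality explicitly.

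Two small repairs. First, take $t_0=k\log t$, not $Ck\log t$ with $C$ large: you need $t_0\le t$ so that $\{T_k\le t_0\}$ forces $D_k(t)<\infty$, and the hypothesis $k\log t\le t$ gives this only for $C\le 1$; the Chernoff bound $\bP(T_k>k\log t)\le\rme^{-k\log t}(\rme\log t)^{k}\le\rme^{-\frac12 k\log t}$ already holds for $t$ large with $C=1$. Second, the phrasing ``$h_F(t,k)\le C_0(t+\sqrt{tk})$ with probability $\ge1-\rme^{-\Theta(k\log t)}$'' is not the right object: what you actually need is the tail estimate $\bP(h_F(t_0,j)>x)\le\bP(\PLPP(t_0,k)\ge x)\le\rme^{-x}$ for $x\ge c\,k\log t$ with $c$ large, which is exactly Lemma~\ref{lemma:3} applied at time $t_0=k\log t$; this is the ``path-refined'' input you correctly anticipate in your last paragraph, and it is already available in Section~\ref{preliminaries}.
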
 

Then, we show that $h_F$, the deposition process with flat initial condition, is close enough to an auxiliary  LPP-like process
which we introduce next.

Given a sequence $Y=(Y^{(r)})_{r \in \N}$ of independent Poisson processes $Y^{(r)}=(Y^{(r)}_t)_{t \geq 0}$ of rate $1$, we define the \textit{auxiliary process} $\PLPP=(\PLPP(t,k) : t > 0\,,\,k \in \N)$ by
\begin{equation}\label{eq:defd}
\PLPP(t,k):= \sup_{v \in \cV(t,k)} H(v,Y),
\end{equation} with $\cV(t,k)$ given by \eqref{eq:defvtk} and $H$ as in \eqref{eq:defh}, i.e., 
\begin{equation}
\label{eq:defhy}
H(v,Y):=\int_0^t \mathrm{d}Y^{(v(s))}_s:=\sum_{r=1}^k Y^{(r)}_{v_r} - Y^{ (r)}_{v_{r-1}}.
\end{equation} The comparison result between $h_F$ and $\PLPP$ that we show is the following.

\begin{proposition}\label{prop:2} There exist constants $c_1,c_2,t^* > 0$ such that, given any $t>t^*$ and $k \leq t$, there exists a coupling of $h_F(t,k)$ and $\PLPP(t,k)$ such that, for all $x > 0$,
\begin{equation}\label{eq:c2}
\bP( |h_F(t,k) - \PLPP(t,k)| > x) \leq \rme^{c_1 \log t - c_2 \frac{x}{k}}.
\end{equation}
\end{proposition}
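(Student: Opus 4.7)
The plan is to use the natural coupling $Y^{(r)} = Q^{(r)}$ for every $r \in \N$. Under this coupling, one establishes by induction on $k$ a restricted-path LPP representation of $h_F$: unrolling the recursion
\[
h_F(\tau, k) \,=\, 1 + \max\{h_F(\tau^-, k-1), h_F(\tau^-, k)\}
\]
at the successive marks $\tau$ of $Q^{(k)}$ and iterating downward in $k$, one obtains
\[
h_F(t,k) \,=\, \max_{\sigma \in \cV^{\mathrm{BD}}(t,k)} H(\sigma, Q),
\]
where $\cV^{\mathrm{BD}}(t,k) \subseteq \cV(t,k)$ denotes the sub-collection of paths for which every intermediate transition time $\sigma_r$ ($1 \leq r \leq k-1$) is either $0$ or a mark time of $Q^{(r+1)}$. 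Since $\cV^{\mathrm{BD}}(t,k) \subseteq \cV(t,k)$, the deterministic inequality $h_F(t,k) \leq \PLPP(t,k)$ follows at once.

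For the reverse direction, I would take an optimizer $v^* \in \cV(t,k)$ for $\PLPP(t,k)$ and round each intermediate $v^*_r$ down to the nearest preceding mark of $Q^{(r+1)}$, setting
\[
\sigma_r \,:=\, \max\!\left(\{0\} \cup \{\tau \in Q^{(r+1)} : \tau \leq v^*_r\}\right), \qquad 1 \leq r \leq k-1,
\]
with rounding gap $g_r := v^*_r - \sigma_r \geq 0$. By construction $(\sigma_r, v^*_r]$ contains no mark of $Q^{(r+1)}$, and a direct telescoping identity gives
\[
H(v^*, Q) - H(\sigma, Q) \,=\, \sum_{r=1}^{k-1} N^{(r)}\!\left( (\sigma_r, v^*_r] \right),
\]
where $N^{(r)}(I)$ counts the marks of $Q^{(r)}$ in $I$. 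Provided $\sigma$ is monotone so that $\sigma \in \cV^{\mathrm{BD}}(t,k)$, this bounds $\PLPP(t,k) - h_F(t,k)$ by $\sum_{r=1}^{k-1} N^{(r)}((\sigma_r, v^*_r])$.

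Finally, to convert this pathwise inequality into the probabilistic estimate \eqref{eq:c2}, I would introduce the event $A$ that the maximum inter-arrival gap of each $Q^{(r+1)}$ in $[0,t]$ is bounded by $c \log t$ for all $r = 1, \ldots, k-1$. Standard Poisson estimates give $\P(A^c) \leq k t^{1-c}$. On $A$ one has
\[
\sum_{r=1}^{k-1} N^{(r)}\!\left((\sigma_r, v^*_r]\right) \,\leq\, \sum_{r=1}^{k-1} \sup_{s \in [0,t]} N^{(r)}\!\left((s - c\log t, s]\right),
\]
and a union bound over $O(t)$ positions shows that each sliding-window maximum exceeds $y$ with probability $\leq t\,\rme^{-y + c\log t}$. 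A further union bound over $r$ together with the choice $y = x/k$ then yields a tail of the form $\rme^{c_1 \log t - c_2 x/k}$, where the prefactor $\rme^{c_1 \log t}$ absorbs both $\P(A^c)$ and the position/level union bounds.

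The main obstacle I anticipate is verifying the monotonicity $\sigma_{r-1} \leq \sigma_r$ of the rounded path, which can fail in principle when the sojourns $v^*_r - v^*_{r-1}$ of the optimizer are shorter than the rounding gaps $g_r$; this holds with very high probability in the regime where sojourns dominate gaps, but requires careful handling near the boundary $k \approx t$ where it may have to be combined with a separate fallback construction. A secondary technical issue is the measurable selection of $v^*$, which can be sidestepped by working with a sequence of near-optimizers.
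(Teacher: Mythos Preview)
Your overall strategy---couple via the same Poisson marks, obtain the restricted-path LPP representation for $h_F$, deduce $h_F\le \PLPP$ deterministically, then round an optimizer of $\PLPP$ into the restricted class---matches the paper's. The telescoping identity you write is also correct. The difference, and the gap, is entirely in the rounding step.

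The monotonicity obstacle you flag is not a boundary nuisance near $k\approx t$; it is the central difficulty, and your proposed handling does not work. The sojourns $v^*_r-v^*_{r-1}$ of an optimizer of $\PLPP(t,k)$ are \emph{not} controlled from below: several of them can equal zero (the optimal path may jump across many levels at the same instant), so on \emph{any} good event your backward-rounded sequence $(\sigma_r)$ can fail to be nondecreasing. There is no regime in which ``sojourns dominate gaps'' uniformly along the optimizer. Thus $\sigma$ need not lie in $\cV^{\mathrm{BD}}(t,k)$, and the inequality $h_F(t,k)\ge H(\sigma,Q)$ that your argument needs is unavailable. Patching this by enforcing monotonicity (e.g.\ setting $\sigma_r$ to be the last mark of $Q^{(r+1)}$ before $\min\{v^*_r,\sigma_{r+1}\}$, iterating downward in $r$) immediately causes the rounding displacements to \emph{compound} across levels, so the intervals $(\sigma_r,v^*_r]$ are no longer contained in a single Poisson gap and your sliding-window bound collapses.

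The paper resolves this by rounding \emph{forward} with a built-in monotonicity device: it sets $u_r:=\inf\{s>\max\{u_{r-1},v_r\}:s\in Y^{(r)}\}\wedge t$, so $u_{r-1}\le u_r$ automatically. The price is exactly the compounding just described: on the stretch $[z_r,u_r]$ where $u$ lags behind $v$ (with $z_r=\max\{u_{r-1},v_r\}$), the path $v$ may traverse several levels, so the loss there is a small \emph{rectangle} LPP quantity $\PLPP(R_r)$ rather than a single-row Poisson count. The paper then controls $\sum_r \PLPP(R_r)$ by (i) enlarging each rectangle so that its time-extent is a full inter-arrival gap of $Y^{(r)}$, making the family of rectangles independent of $v$, and (ii) using the crude LPP tail bound of Lemma~\ref{lemma:3} to show that on a good event each $\PLPP(R)\le \mathrm{width}(R)\cdot \tfrac{x}{2k}$; since the widths telescope to at most $2k$, the total loss is $\le x$. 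This width-weighted bookkeeping is precisely what absorbs the compounding, and it has no analogue in your sketch.

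A minor remark: the paper obtains its restricted-path representation via the time-and-space reversal of Lemma~\ref{lemma:1} (so that jumps from level $r$ to $r+1$ occur at marks of $Y^{(r)}$), whereas you unroll directly (jumps at marks of $Q^{(r+1)}$). The two pictures are equivalent up to relabelling, and either yields the deterministic inequality $h_F\le \PLPP$; this is not where the difficulty lies.
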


As a last step, we show that the fluctuations of $\PLPP$ (centered) and $\BLPP$ are close enough to each other.

\begin{proposition}\label{prop:3} There exist constants $c_1,c_2,t^* > 0$ such that, given any $t>t^*$ and $k \in \N$, there exists a coupling of $\PLPP(t,k)$ and $\BLPP(t,k)$ such that, for all $x > 0$,
\begin{equation}\label{eq:c3}
\bP( 
|\PLPP(t,k) - t - \BLPP(t,k)| > x) \leq \rme^{ c_1 k \log t - c_2 x}.
\end{equation}
\end{proposition}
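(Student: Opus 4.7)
The plan is to couple $\PLPP(t,k)$ and $\BLPP(t,k)$ on a common probability space by independently pairing, for each level $r \in \N$, the Poisson process $Y^{(r)}$ with the Brownian motion $B^{(r)}$ via the Komlós--Major--Tusnády strong approximation for Poisson processes. This classical embedding provides universal constants $C_1, C_2, C_3 > 0$ such that, setting $Z^{(r)}_s := Y^{(r)}_s - s - B^{(r)}_s$, one has
\[
\bP\left( \sup_{s \leq t} |Z^{(r)}_s| > C_1 \log t + u \right) \leq C_2 \rme^{-C_3 u}
\]
for every $t \geq 2$ and $u \geq 0$. The independence of the couplings across $r$ will be crucial in the last step.

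Next comes a purely deterministic reduction. Since $v_0 = 0$ and $v_k = t$ for every $v \in \cV(t,k)$, the telescoping identity $\sum_{r=1}^k (v_r - v_{r-1}) = t$ gives $H(v, Y) - t = H(v, \tilde Y)$ with $\tilde Y^{(r)}_s := Y^{(r)}_s - s$. Taking differences with $H(v,B)$ and using $|\sup_v f - \sup_v g| \leq \sup_v |f - g|$, I would obtain
\[
\bigl| \PLPP(t,k) - t - \BLPP(t,k) \bigr| \leq \sup_{v \in \cV(t,k)} \left| \sum_{r=1}^k \bigl( Z^{(r)}_{v_r} - Z^{(r)}_{v_{r-1}} \bigr) \right| \leq 2 \sum_{r=1}^k M^{(r)}_t,
\]
where $M^{(r)}_t := \sup_{s \leq t} |Z^{(r)}_s|$. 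The problem thus reduces to a right-tail bound on a sum of $k$ i.i.d.\ nonnegative random variables.

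For this final step, a direct integration of the KMT tail bound shows that, for every $s \in (0, C_3/2)$, one has $\E[\rme^{s M^{(r)}_t}] \leq C' \rme^{s C_1 \log t}$ with $C'$ independent of $t$ and $r$. By independence across $r$, the moment generating function of $\sum_{r=1}^k M^{(r)}_t$ is bounded by $(C')^k \rme^{s C_1 k \log t}$, and a Chernoff bound at $s = C_3/2$ then yields
\[
\bP\Bigl( \sum_{r=1}^k M^{(r)}_t > x/2 \Bigr) \leq \rme^{c_1 k \log t - c_2 x}
\]
for suitable $c_1, c_2 > 0$, absorbing the $(C')^k$ prefactor into the $k \log t$ term, which is legitimate once $t$ is taken large enough. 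Combined with the deterministic bound above, this is exactly \eqref{eq:c3}. The main obstacle is Step~1: obtaining the uniform-in-time KMT estimate with both a $\log t$-sharp centering and truly exponential tails in $u$ is what drives the whole argument, since a naive union bound over $r$ applied to a single $M^{(r)}_t$ would only deliver a bound of the shape $\rme^{c_1 \log t - c_2 x/k}$, losing a factor of $k$ in the exponential rate and hence failing to match \eqref{eq:c3}.
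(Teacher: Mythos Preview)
Your proposal is correct and follows essentially the same route as the paper's own proof: couple each pair $(Y^{(r)},B^{(r)})$ independently via the KMT-type Lemma~\ref{lemma:2}, reduce $|\PLPP(t,k)-t-\BLPP(t,k)|$ deterministically to $2\sum_{r=1}^k \sup_{s\le t}|Y^{(r)}_s-s-B^{(r)}_s|$ via the telescoping identity and the inequality $|\sup f-\sup g|\le\sup|f-g|$, and then use independence together with an exponential Chebyshev bound after integrating the KMT tail to control the moment generating function of each summand. The only cosmetic differences are notational (the paper writes $Z^{(r)}_s=Y^{(r)}_s-s$ and bounds $\sup_s|Z^{(r)}_s-B^{(r)}_s|$, whereas you build the Brownian part directly into your $Z^{(r)}$) and in how the MGF integral is parametrized.
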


It will be clear from the proofs of these propositions that it is possible to jointly couple the random variables $(h_G(t,k) : G \in \cI)$, $\PLPP(t,k)$ and $\BLPP(t,k)$ so that the conclusions of Propositions \ref{prop:1}, \ref{prop:2} and \ref{prop:3} all hold simultaneously. From this fact we immediately obtain Theorem~\ref{theo:2} and, as a consequence, also {Theorems~\ref{theo:1}--\ref{theo:3} (see Section~\ref{sec:conclusion} and Section~\ref{sec:proofoftheo3} respectively for details).}

The key element in the proofs of Propositions \ref{prop:1}--\ref{prop:2} is the alternative representation in \eqref{eq:defhcirc2} of $\osbd$ as a directed LPP model, which allows us to effectively compare it with Brownian LPP. We now present this alternative representation in more detail.

Given the sequence $Y=(Y^{(r)})_{r \in \N}$ of independent Poisson processes $Y^{(r)}=(Y^{(r)}_t)_{t \geq 0}$ of rate $1$ used above to define the sets $\cV(t,k)$, we write $u \in Y^{(r)}$ to indicate that $u \in \R_{\geq 0}$ is a mark of $Y^{(r)}$, i.e., $Y^{(r)}_u \neq Y^{(r)}_{u^-}$, and, for $t > 0$ and $k \in \N$, consider the set
\begin{equation}\label{eq:defu}
\begin{split}
\cU(t,k):=\bigg\{ u \in \cV(t,k) : &\,  u(0)=1\,,\,\sup_{s \in (0,t]} |u(s)-u(s^-)| \leq 1\,,\\& \,u_r \in Y^{(r)} \text{ for all }r=1,\dots,u(t)-1\bigg\},
\end{split}
\end{equation} where, for each $r=0,\dots,k$, we write $u_r:=\inf \{ s \in [0,t] : u(s)> r\} \wedge t$ as before and, in addition, we stipulate the condition $u_r \in Y^{(r)}$ for all $r=1,\dots,u(t)-1$ is omitted whenever $u(t)=1$. In other words, $\cU(t,k)$ is the space of all c\` adl\`ag increasing paths starting at $1$ that have at most $k$ jumps, all with size $+1$, which can only jump from $r$ to $r+1$ at a given time $s$ if $s$ is a mark of $Y^{(r)}$. Observe that $\cU(t,k) \subseteq \cV(t,k)$ since paths in $\cV(t,k)$ are allowed to jump at arbitrary times while paths in $\cU(t,k)$ can only jump at Poisson marks. In particular, for any $u \in \cU(t,k)$ one can recover the value of $u(s)$ for all $s \in [0,t)$ from the vector $(u_0,\dots,u_k)$. Therefore, we may view elements of $\cU(t,k)$ either as paths as in~\eqref{eq:defu} or as vectors $(u_0,\dots,u_k) \in \R^{k+1}$ with $0=u_0 \leq u_1 \leq \dots \leq u_k = t$ such that, for every $r=1,\dots,k$, one has either $u_{r-1}=t$ or that $u_{r-1} < u_r$ and $u_r \in Y^{(r)} \cup \{ t\}$ (by the latter we mean that either $u_r \in Y^{(r)}$ or $u_r=t$). See Figure~\ref{fig:1-S} for an illustration. 

\begin{figure}
 \begin{center}
 \includegraphics[scale=0.58]{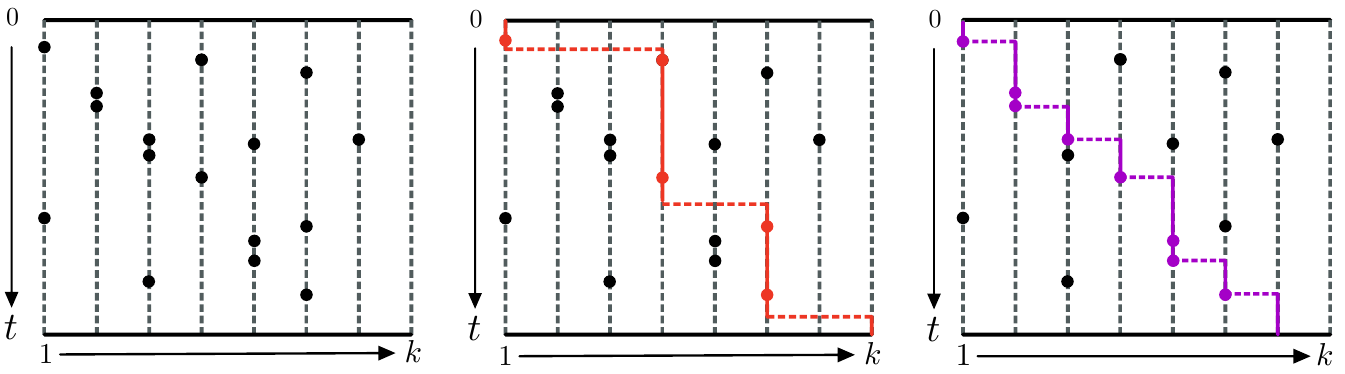}
 \caption{\textbf{Illustration of paths in $\cV(t,k)$ and $\cU(t,k)$}. The picture on the left shows a realization of the Poisson processes $Y^{(r)}$ on $[0,t]$ for  $r=1,\dots,k$. The path in red in the middle picture is an example of a path in $\cV(t,k)$ based on this realization, whereas the purple path on the rightmost picture is an example of path in $\cU(t,k)$. Red and purple dots in the second and third pictures respectively correspond to points accounted for by $H$ in \eqref{eq:defhy} and \eqref{eq:defhG}. We remark that the only difference between paths in $\cV(t,k)$ and $\cU(t,k)$ (and, consequently, in the definition of $\PLPP$ and $h^\circ_F$) is that, for $L$, paths are allowed to jump at arbitrary times, whereas, for $h^\circ_F$, paths are only allowed to jump at Poisson marks.}\label{fig:1-S}
\end{center}
\end{figure}

{Now, for any initial condition $G:\N \to [-\infty, \infty)$, the alternative representation of~$\osbd$ is defined as in \eqref{eq:defhcirc}, i.e., as the process $h^\circ_G:=(h^\circ_G(t,k) : t \geq 0\,,\,k \in \N)$ given by the formula
\begin{equation}
h^\circ_G(t,k):=\sup_{u \in \cU(t,k)} [H(u,Y)+G(k-u(t)+1)],
\end{equation} with $H$ as in \eqref{eq:defh}.
Then, we show that $h_G$ and $h_G^\circ$ have the same marginal distributions.}

\begin{lemma}\label{lemma:1} Let $h_G=(h_G(t,k) : t \geq 0\,,\,k \in \N)$ be a $\osbd$ process with initial condition $G:\N \to [-\infty,\infty)$ (not necessarily belonging to $\cI$). Then, given any $t > 0$ and $k \in \N$, {we have the equality in distribution
\begin{equation}\label{eq:defhG}
h_G(t,k)\overset{d}{=} h_G^\circ(t,k).
\end{equation} In particular, we have 
\[
h_S(t,k)\overset{d}{=} \sup_{u \in \cU(t,k) \atop u(t)=k} H(u,Y) 
\] with the convention that $\sup \emptyset:=-\infty$ and 
\[
h_F(t,k)\overset{d}{=} \sup_{u \in \cU(t,k)} H(u,Y).
\]}
\end{lemma}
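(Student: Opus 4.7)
The plan is to exhibit a pointwise almost-sure identity between $h_G(t,k)$ and a last-passage functional built on time-reversed Poisson marks, from which the distributional equality in the statement follows at once. Fix $t>0$ and $k\in\N$, and couple the $Y$'s to the $Q$'s by setting
\[
Y^{(r)}_s\,:=\,Q^{(k-r+1)}_t - Q^{(k-r+1)}_{(t-s)^-}, \qquad r=1,\dots,k,\ s\in[0,t],
\]
completing the construction with additional independent rate-$1$ Poisson processes for $r>k$. Since a rate-$1$ Poisson process on $[0,t]$ is invariant in law under the reversal $s\mapsto t-s$, and since the reindexing $r\mapsto k-r+1$ preserves independence, $(Y^{(r)})_{r\in\N}$ has the distribution required in the statement, so it is enough to establish the equality in \eqref{eq:defhG} almost surely under this coupling.

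Next, I would unfold the OSBD recursion \eqref{eq:definition.osbd} into a maximum over ``backward traces''. A backward trace $\pi$ from $(t,k)$ is determined by an integer $m\in\{0,1,\dots,k-1\}$, a strictly decreasing sequence $t=:s_0>s_1>\cdots>s_m>0$, and the columns $c_i:=k-i$ for $i=0,\dots,m$, with the requirement that $s_i\in Q^{(c_{i-1})}$ for every $i=1,\dots,m$. Its weight is
\[
w(\pi)\,:=\,G(c_m)+\#\bigl(Q^{(c_m)}\cap[0,s_m)\bigr)+\sum_{i=0}^{m-1}\#\bigl(Q^{(c_i)}\cap[s_{i+1},s_i)\bigr).
\]
Iterating the identity $h_G(s,k)=1+\max\{h_G(s^-,k-1),h_G(s^-,k)\}$ at marks $s\in Q^{(k)}$ (and using that $h_G(\cdot,k)$ is piecewise constant between such marks), together with an induction on $k$, yields $h_G(t,k)=\max_\pi w(\pi)$: at each step the ``max'' splits into a ``drop at $s$'' branch, which invokes the inductive hypothesis on $h_G((s)^-,k-1)$, and a ``postpone the drop'' branch, which recurs on $h_G(s^-,k)$; the base case $k=1$ forces $m=0$ and gives $h_G(t,1)=G(1)+Q^{(1)}_t$ as required.

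Finally, I would read off a bijection between backward traces and paths in $\cU(t,k)$ by setting $u_r:=t-s_r$ for $r=1,\dots,m$ and $u_r:=t$ for $r>m$, so that $u(t)=m+1$ and $c_m=k-u(t)+1$. The admissibility condition $s_r\in Q^{(c_{r-1})}=Q^{(k-r+1)}$ becomes $u_r\in Y^{(r)}$ under the coupling, since the $Y^{(r)}$-marks are the images of the $Q^{(k-r+1)}$-marks under the reversal; moreover the interval $(u_{r-1},u_r]$ in $Y$-time is the image of $[s_r,s_{r-1})$, so that $Y^{(r)}_{u_r}-Y^{(r)}_{u_{r-1}}=\#(Q^{(c_{r-1})}\cap[s_r,s_{r-1}))$ and summing over $r$ gives $H(u,Y)=w(\pi)-G(c_m)$. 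Combined with $G(c_m)=G(k-u(t)+1)$, this proves \eqref{eq:defhG}, and the cases $G=S$ and $G=F$ are immediate from $S(j)=-\infty$ for $j\ne 1$ and $F\equiv 0$. I expect the main obstacle to be the bookkeeping in the inductive identification $h_G(t,k)=\max_\pi w(\pi)$: one must get the interval and endpoint conventions exactly right so that the two branches of the max-plus recursion correspond precisely to the ``drop now'' and ``postpone the drop'' options, and so that each $Q^{(c)}$-mark is counted toward the correct column in $w(\pi)$.
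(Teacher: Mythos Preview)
Your proposal is correct and follows essentially the same approach as the paper: both couple $Y$ to $Q$ via time-reversal on $[0,t]$ together with the reindexing $r\mapsto k-r+1$, and both establish the pointwise identity by inducting on $k$ and unfolding the max-plus recursion at the Poisson marks of the top column. Your intermediate language of ``backward traces'' is just an explicit naming of what the paper handles directly through its decomposition of $\cU(s,k+1)$ into paths that do or do not jump at the first available mark; the content of the two arguments is the same.
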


On the other hand, to prove Proposition~\ref{prop:3}, we adapt a result due to Komlós, Major and Tusnády \cite{Komlos}, to obtain the following strong approximation of a Poisson process by Brownian motion with drift.

\begin{lemma}\label{lemma:2}
    There exist constants $C, \lambda, K, \kappa > 0$ such that, for any fixed $t \geq 1$, there exists a coupling of a Poisson process $(Y_s)_{s \geq 0}$ of rate $1$ and a standard Brownian Motion $(B_s)_{s \geq 0}$ such that, for all $x \geq C \log t$,
    \begin{equation}\label{eq:rwbound2}
        \P \lp \sup_{s \in [0,t]} \left| Y_s - s - B_s\right| >x \rp \leq K t^{\lambda} \rme^{- \kappa x}. 
    \end{equation}
\end{lemma}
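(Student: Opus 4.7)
The plan is to reduce the statement to the classical Komlós--Major--Tusnády strong approximation for partial sums of i.i.d.\ random variables with finite exponential moments, and then upgrade the discrete-time coupling to a continuous-time one by a straightforward interpolation argument.

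First, I would look at the Poisson process only at integer times. The increments $X_i := Y_i - Y_{i-1}$ are i.i.d.\ Poisson$(1)$, so the centered variables $X_i - 1$ have mean $0$, variance $1$, and a finite moment generating function in a neighborhood of the origin. The classical KMT theorem then produces, for each $t \geq 1$, a coupling with a standard Brownian motion $(B_s)_{s \geq 0}$ and absolute constants $C_0, K_0, \kappa_0 > 0$ such that
\begin{equation*}
\bP\lp \max_{n \leq \lceil t \rceil} \lv Y_n - n - B_n \rv > C_0 \log t + y \rp \leq K_0 \, \rme^{-\kappa_0 y}
\end{equation*}
for every $y \geq 0$. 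This is the core input and requires no adaptation beyond verifying the standard hypotheses.

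Second, I would interpolate to arbitrary $s \in [0,t]$. For $s \in [n, n+1]$ with $n \in \{0, \dots, \lfloor t \rfloor\}$, a triangle inequality gives
\begin{equation*}
\lv Y_s - s - B_s \rv \leq \lv Y_n - n - B_n \rv + (Y_{n+1} - Y_n) + 1 + \sup_{u \in [n, n+1]} \lv B_u - B_n \rv.
\end{equation*}
The last two fluctuation terms are controlled independently: by a union bound over the $\lfloor t \rfloor + 1$ unit intervals, together with the standard Poisson tail $\bP(\text{Poi}(1) > y) \leq C_1 \rme^{-c_1 y}$ and the reflection-principle bound $\bP(\sup_{u \in [0,1]}|B_u| > y) \leq 4 \rme^{-y^2/2}$, one obtains
\begin{equation*}
\bP\lp \max_{n \leq \lfloor t \rfloor} (Y_{n+1} - Y_n) + \max_{n \leq \lfloor t \rfloor} \sup_{u \in [n, n+1]} \lv B_u - B_n \rv > y \rp \leq K_1 (t+1) \rme^{-\kappa_1 y}
\end{equation*}
for suitable constants $K_1, \kappa_1 > 0$ and all $y \geq 1$.

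Third, combining the two displays above, for any $x \geq 0$,
\begin{equation*}
\bP\lp \sup_{s \in [0,t]} \lv Y_s - s - B_s \rv > C_0 \log t + 1 + 2x \rp \leq K_0 \, \rme^{-\kappa_0 x} + K_1 (t+1)\, \rme^{-\kappa_1 x}.
\end{equation*}
Finally, I would choose $C$ sufficiently large so that when $x \geq C \log t$, the prefactor $K_1(t+1)$ is absorbed into $\rme^{-(\kappa_1/2) x}$, and the additive $C_0 \log t + 1$ is dominated by $x$. Adjusting $\kappa$ and $\lambda, K$ accordingly yields the stated bound \eqref{eq:rwbound2}.

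The only step requiring some care is the choice of constants in the final absorption: one must pick $\lambda$ large enough to swallow the linear factor $t+1$ arising from the union bound, while still keeping $\kappa > 0$. This is routine but is the main bookkeeping obstacle; conceptually, the whole argument is just KMT for discrete Poisson increments plus interpolation via the boundedness of Poisson jumps and Brownian oscillation on unit intervals.
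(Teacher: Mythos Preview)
Your proposal is correct and follows essentially the same route as the paper: apply KMT to the integer-time Poisson increments, then interpolate to continuous time by controlling Poisson and Brownian oscillations on unit intervals via a union bound. The only cosmetic differences are that the paper explicitly builds the Poisson process from the coupled increments (sprinkling the $X_n$ jumps uniformly over $[n-1,n]$) and uses Doob's maximal inequality on $\rme^{|Z_s|}$ and $\rme^{|B_s|}$ in place of your Poisson-tail/reflection-principle bounds; neither difference affects the argument.
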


Using some ideas from the proof of \cite[Theorem~1]{Bodineau}, we show that $|\PLPP(t,k) - t - \BLPP(t,k)|$ is bounded from above by a sum of $k$ i.i.d. random variables with tails given by~\eqref{eq:rwbound2}, from which Proposition~\ref{prop:3} follows by the exponential Tchebychev inequality.

\section{Outline of the proof of Theorem~\ref{theo:4}}\label{sec:outline2}

Note that, for any $s \in (0,1)$, we have the inclusion of events
\[
A^\gamma_G(t,k) \leq B^\gamma_G(t,k; s),
\] where $A_G^\gamma(t,k)$ and $B^\gamma_G(t,k;s)$ are defined as in \eqref{eq:defa} and \eqref{eq:defb}, respectively. In particular, from this it follows that
\begin{equation}\label{eq:ineqxi}
\xi_G(\alpha;s) \leq \xi_G(\alpha).
\end{equation} 
Therefore, in light of \eqref{eq:ineqxi}, Theorem~\ref{theo:4} will follow at once from the next two propositions.

\begin{proposition}
\label{prop:fluc1} Let $\alpha: \R_{\geq 0} \to \N$ verify $\displaystyle{\lim_{t \to \infty} \alpha(t)=\infty}$ and 
$\alpha(t)=o\big(t^{\frac{3}{7}}(\log t)^{-\frac{6}{7}}\big)$. Then, for any $G \in \cI$, $\gamma \in (0,2/3)$ and $s \in (0,1)$, we have
\[ 
\limsup_{t \to \infty} \bP(B^\gamma_G(t,\alpha(t);s)) < 1.
\] In particular, $\xi_G(\alpha;s)\geq \frac{2}{3}$ for all $s \in (0,1)$.  
\end{proposition}

{\begin{proposition}\label{prop:fluc2} Let $\alpha: \R_{\geq 0} \to \N$ verify $\lim_{t \to \infty} \frac{\alpha(t)}{(\log t)^\rho}=\infty$ for any $\rho > 0$ and $\alpha(t)=o(t^\eta)$ for some $\eta \in (0,\frac{9}{31})$. Then, for any $G \in \cI$ and $\gamma > \frac{2}{3}$ we have 
\begin{equation}\label{eq:asymp3}
\liminf_{t \to \infty} \P(A_G^\gamma(t,\alpha(t)))=1.
\end{equation} In particular, we have $\xi_G(\alpha) \leq \frac{2}{3}$ for all $\cG \in \cI$.
\end{proposition}}


We give the outline of the proof of each proposition in a separate subsection.


\refstepcounter{subsection} 
\subsection*{\thesubsection~Outline of the proof of the lower bound $\xi_G(\alpha;s) \geq \frac{2}{3}$}

To present the main ideas of the argument in a clear and concise fashion, we will assume throughout this outline that $s=\frac{1}{2}$ and $G=F$. The argument in the general case $s \in (0,1)$ and $G \in \cI$ is essentially the same, albeit with some (minor) additional technical difficulties, we refer the reader to Section~\ref{sec:proplbf} for details.

The general argument behind the proof of Proposition~\ref{prop:fluc1} in this case is as follows. Let $\overline{u}$ be any geodesic for $h^\circ_F(t,\alpha(t))$ and abbreviate $\overline{u}_*:=\overline{u}(\frac{1}{2}t)$. Then, by definition of geodesic, we have
\begin{equation}\label{eq:descompfacil}
h^\circ_F(t,\alpha(t)) = h^\circ_S(\tfrac{1}{2}t,\overline{u}_*) + h_F^\circ( (\tfrac{1}{2}t,\overline{u}_*) \to (t,\alpha(t))), 
\end{equation} where, for $\cG \in \cI$ and $(t_1,k_1)$, $(t_2,k_2) \in \R_{\geq 0} \times \N$ such that $t_1 \leq t_2$ and $k_1 \leq k_2$, we define
\begin{equation}\label{eq:defhpp}
\begin{split}
h_G^\circ((t_1,k_1) \to (t_2,k_2)) &= \sup_{u \in \cU(t_2,k_2) \atop u(t_1)=k_1} \left[\int_{t_1}^{t_2} \mathrm{d}Y^{(u(s))} + G(k_2-u(t_2)+1)\right] \\ &= \sup_{u \in \cU(t_2,k_2) \atop u(t_1)=k_1} \left[ \sum_{r=k_1}^{k_2} Y^{(r)}_{u_r} - Y^{(r)}_{\max\{ u_{r-1},t_1\}} + G(k_2-u(t_2)+1)\right], 
\end{split}
\end{equation} see {Figure~\ref{fig:C-2} for an illustration}.

\begin{figure}
 \begin{center}
 \includegraphics[scale=0.8]{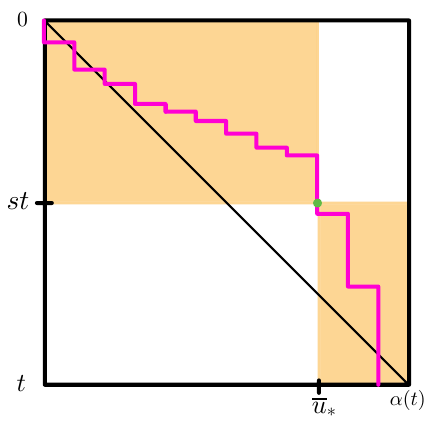}
 \caption{\textbf{Illustration of \eqref{eq:descompfacil} and \eqref{eq:decomph}.} The picture shows a geodesic $\overline{u}$ for $h^\circ_G(t,\alpha(t))$, colored in pink, and two orange rectangles, $R_1:=[0,st] \times [0,\overline{u}_*]$ and $R_2:=[st,t]\times [\overline{u}_*,\alpha(t)]$. The location $(st,\overline{u}_*)$ of $\overline{u}$ at time $t'=st$ is indicated by a green dot. The number of marks collected by $\overline{u}$ in $R_1$ and $R_2$ is respectively $h^\circ_S(st,\overline{u}_*)$ and $h^\circ_G((st,\overline{u}_*) \to (t,\alpha(t))$, giving immediately the equalities in \eqref{eq:descompfacil} and \eqref{eq:decomph}.}\label{fig:C-2}
\end{center}
\end{figure}

Therefore, if, for $t>0$ and $G \in \cI$, we write
\[
\overline{h^\circ_G}(t,\alpha(t)):=\frac{h^\circ_G(t,\alpha(t))-t-2\sqrt{t\alpha(t)}}{\sqrt{t(\alpha(t))^{-\frac{1}{3}}}}
\] and, for any $s \in (0,1)$ and $k \in [1,\alpha(t)]$, define the quantities
\begin{equation}\label{eq:defuk}
U_{t,\alpha}(s,k):=\frac{h^\circ_S(st,k) - st - 2\sqrt{stk}}{\sqrt{stk^{-\frac{1}{3}}}}
\end{equation} and
\begin{equation}\label{eq:defvk}
V_{t,\alpha}(s,k):=\frac{h^\circ_F( (st,k) \to (t,\alpha(t))) - (1-s)t - 2\sqrt{(1-s)t(\alpha(t)-k)}}{\sqrt{(1-s)t(\alpha(t)-k)^{-\frac{1}{3}}}},
\end{equation}
then, whenever $\overline{u}_*=\frac{1}{2}\alpha(t)$ (assuming $\frac{1}{2}\alpha(t) \in \N$ for simplicity), we can write
\begin{equation}\label{eq:decompshalf}
\overline{h^\circ_F}(t,\alpha(t)) =  \frac{1}{2^{1/3}}\left(U_{t,\alpha}(\tfrac{1}{2},\overline{u}_*) +V_{t,\alpha}(\tfrac{1}{2},\overline{u}_*)\right).
\end{equation} Now, on the one hand, $\overline{h^\circ_F}(t,\alpha(t))$ and $U_{t,\alpha}(\tfrac{1}{2},\tfrac{1}{2}\alpha(t))$ converge in distribution to~$\TW_{\GUE}$ by virtue of Theorem~\ref{theo:1} and Lemma~\ref{lemma:1}. On the other hand, by translation invariance of the sequence $(Y^{(r)})_{r \in \N}$ (when viewed as a single Poisson process on $\R_{\geq 0} \times \N$), we~have
\begin{equation}\label{eq:decompshalf2}
h^\circ_F\left((\tfrac{1}{2}t,\tfrac{1}{2}\alpha(t)) \to (t,\alpha(t))\right) \overset{d}{=} h^\circ_F\left(\tfrac{1}{2}t,\tfrac{1}{2}\alpha(t)+1\right)
\end{equation} so that, by Theorem~\ref{theo:1} and Lemma~\ref{lemma:1} again, $V_{t,\alpha}(\tfrac{1}{2},\tfrac{1}{2}\alpha(t))$ also converges to~$\TW_{\GUE}$. Moreover, $U_{t,\alpha}(\tfrac{1}{2}t,\tfrac{1}{2}\alpha(t))$ and $V_{t,\alpha}(\tfrac{1}{2}t,\tfrac{1}{2}\alpha(t))$ are independent since they are functions of non-overlapping regions of the Poisson process $(Y^{(r)})_{r \in \N}$. In light of these facts, \eqref{eq:decompshalf} immediately implies that the event $\{\overline{u}_*=\tfrac{1}{2}\alpha(t)\}$ cannot occur with probability tending to $1$ as $t \to \infty$. Indeed, if this were the case, then from \eqref{eq:decompshalf} and the above discussion we would obtain that
\begin{equation}\label{eq:decompshalf3}
\TW_{\GUE} = \frac{1}{2^{1/3}} ( \TW_{\GUE} \oplus \TW_{\GUE})
\end{equation} (where $\oplus$ indicates that the summands on the right-hand side are independent), a fact which is not true: for example, the variances on both sides of \eqref{eq:decompshalf3} do not match.

We now claim that a similar argument can be used to show that, for $\gamma < \frac{2}{3}$, the event $\{|\overline{u}_*-\tfrac{1}{2}\alpha(t)|\leq (\alpha(t))^\gamma\}$ cannot occur with probability tending to $1$ as $t \to \infty$. Indeed, this will follow from the key fact (which needs to be shown) that, for any fixed $s \in (0,1)$, if $C^\gamma(t,\alpha(t);s)=\{ k \in \N: |k-s\alpha(t)|\leq (\alpha(t))^\gamma\}$ denotes the cross section of the cylinder $C^\gamma(t,\alpha(t))$ at time $t'=st$, then the random variables in each of the collections
\[
\cC_{t,\alpha,\gamma}^{(1)}(s):=\left(U_{t,\alpha}(s,k) : k \in C^\gamma(t,\alpha(t);s)\right)
\] \[
\cC_{t,\alpha,\gamma}^{(2)}(s):=\left(V_{t,\alpha}(s,k) : k \in C^\gamma(t,\alpha(t);s)\right),
\] are \textit{strongly correlated} if $\gamma < \frac{2}{3}$ and  $t$ is large enough, which will allow~us to approximate the height $h^\circ_F(t,\alpha(t))$ on the event $\{|\overline{u}_* - \tfrac{1}{2}\alpha(t)|\leq (\alpha(t))^\gamma\}$ by
\[
\begin{split}
h^\circ_F(t,\alpha(t)) & = h^\circ_S(\tfrac{1}{2}t,\overline{u}_*) + h_F^\circ( (\tfrac{1}{2}t,\overline{u}_*) \to (t,\alpha(t))) \\
& \approx 
h_S^\circ ( \tfrac{1}{2}t,\tfrac{1}{2}\alpha(t)) + h_F^\circ( (\tfrac{1}{2}t,\tfrac{1}{2}\alpha(t)) \to (t,\alpha(t))),
\end{split}
\] and then essentially reproduce the same argument given above to conclude the result. More precisely, the strong correlation result we will show is the following (analogous to the one proved in \cite{Basu} for Brownian LPP), which states that, for $i=1,2$, the maximum of the random variables in  $\mathcal{C}^{(i)}_{t,\alpha,\gamma}(s)$ has the same tail behavior (in the limit as $t \to \infty$) as each single variable in $\mathcal{C}^{(i)}_{t,\alpha,\gamma}(s)$.

\begin{lemma}\label{lemma:fluc1a} Let $\alpha: \R_{\geq 0} \to \N$ verify $\lim_{t \to \infty} \alpha(t)=\infty$ and $\alpha(t)=o\big(t^{\frac{3}{7}}(\log t)^{-\frac{6}{7}}\big)$. Then, given any $\gamma \in (0,2/3)$, $s \in (0,1)$ and $\varepsilon > 0$, there exist $t_{\gamma,s,\varepsilon},x_{\gamma,s,\varepsilon} >0$ such that, for all $t> t_{\gamma,s,\varepsilon}$ and $x \in [x_{\gamma,s,\varepsilon},\min\{ (\alpha(t))^{1/20},(\log t)^{1/4}\}]$,
\begin{equation}\label{eq:maxleft}
\rme^{-\frac{1}{12}(1+\varepsilon)x^{3}} \leq \bP\left( \max_{k \in C^\gamma(t,\alpha(t);s)} U_{t,\alpha}(s,k) \leq -x\right) \leq \rme^{-\frac{1}{12}(1-\varepsilon)x^{3}}
\end{equation} and
\begin{equation}\label{eq:maxright}
\mathrm{e}^{-\frac{4}{3}(1+\varepsilon)x^{3/2}}\leq \P\left(\max_{k \in C^\gamma(t,\alpha(t);s)} U_{t,\alpha}(s,k) \geq x\right) \leq \mathrm{e}^{-\frac{4}{3}(1-\varepsilon)x^{3/2}}.
\end{equation} Moreover, the same tail estimates hold also for $\max_{k \in C^\gamma(t,\alpha(t);s)} V_{t,\alpha}(s,k)$. 
\end{lemma}


Nonetheless, we point out that the argument given above whenever $\overline{u}_*=\frac{1}{2}\alpha(t)$ does not immediately carry over to the case in which one only has that $|\overline{u}_* - \frac{1}{2}\alpha(t)|\leq (\alpha(t))^\gamma$. Indeed, on the one hand, the equality in \eqref{eq:decompshalf} is not true anymore if $\overline{u}_*\neq \tfrac{1}{2}\alpha(t)$, we only have the $\leq$ inequality. In addition, the dependence in $u_*$ on the right-hand side of \eqref{eq:decompshalf} may affect the convergence to $\frac{1}{2^{1/3}}(\TW_{\GUE} \oplus \TW_{\GUE})$. To handle both issues at once, we will replace the random variables on the right-hand side of \eqref{eq:decompshalf} by the maximum over all random variables in $\mathcal{C}^{(1)}_{t,\alpha,\gamma}(\tfrac{1}{2})$ and $\cC^{(2)}_{t,\alpha,\gamma}(\tfrac{1}{2})$, respectively, which exhibit the same asymptotic tail behavior as the original variables by Lemma~\ref{lemma:fluc1a}. More precisely, we will show that, on the event $\{|\overline{u}_* - \tfrac{1}{2}\alpha(t)| \leq (\alpha(t))^\gamma\}$, we have
\begin{equation}\label{eq:decompshalf4}
\overline{h^\circ_F}(t,\alpha(t)) \leq \frac{1}{2^{1/3}}\left[ \max_{k \in C^\gamma(t,\alpha(t);\frac{1}{2})} U_{t,\alpha}(\tfrac{1}{2},k) + \max_{k \in C^\gamma(t,\alpha(t);\frac{1}{2})} V_{t,\alpha}(\tfrac{1}{2},k)\right]+o(1),
\end{equation} where the term $o(1)$ converges to $0$ in probability as $t \to \infty$. However, these maxima are no longer guaranteed to converge to $\TW_{\GUE}$ (even if asymptotically they do have the same tail behavior), and thus, to conclude that \eqref{eq:decompshalf4} cannot occur with overwhelming probability as $t \to \infty$, we will not be able to argue as before by comparing the variances of the limiting distributions on both sides of the inequality. Instead, what we will do is show that both sides have (in the limit as $t \to \infty$) a different left tail, as indicated by the next result.

\begin{lemma}\label{lemma:fluc1b} Let $\beta:\R_{\geq 0} \to \R_{\geq 0}$ be such that $\lim_{t \to \infty} \beta(t)=\infty$ and suppose that, for each $t > 0$, $U_t$ and $V_t$ are independent random variables satisfying that, given $\varepsilon > 0$, there exist $t^*_{\varepsilon} >0$ and $x^*_{\varepsilon} \geq 1$ such that, if $t > t^*_\varepsilon$ and $x \in [x^*_{\varepsilon},\beta(t)]$, 
\[
\rme^{-\frac{1}{12}(1+\varepsilon)x^{3}} \leq \bP\left( U_t \leq -x\right) \leq \rme^{-\frac{1}{12}(1-\varepsilon)x^{3}}
\] and 
\[
\rme^{-\frac{1}{12}(1+\varepsilon)x^{3}} \leq \bP\left( V_t \leq -x\right) \leq \rme^{-\frac{1}{12}(1-\varepsilon)x^{3}}.
\] Then, given $s \in (0,1)$, if $\varepsilon$ is chosen small enough, for any $\tilde{x}\geq x_\varepsilon^*$ there exists $t^*_{\varepsilon,s,\tilde{x}} >0$ such that, for all $t > t^*_{\varepsilon,s,\tilde{x}}$, we have
\begin{equation}\label{eq:cotaint3}
\P\left(s^{\frac{1}{3}}U_{t} + (1-s)^{\frac{1}{3}}V_{t} \leq -z_{\varepsilon,s}(\tilde{x})\right) \geq \frac{1}{2} \mathrm{e}^{-\frac{1}{12}(1+\varepsilon)c_s(z_{\varepsilon,s}(\tilde{x}))^3},
\end{equation} where $z_{\varepsilon,s}(\tilde{x}):=(s^{1/3}+(1-s)^{1/3})\frac{1}{\varepsilon}\tilde{x}$ and $c_s:=\frac{1}{1+2\sqrt{s(1-s)}} < 1$.
\end{lemma}

The argument in full detail (as well as the proof for general $s \in (0,1)$ and $G \in \cI$) can be found in Section~\ref{sec:proplbf}.

\refstepcounter{subsection} 
\subsection*{\thesubsection~Outline of the proof of the upper bound $\xi_G(\alpha) \leq \frac{2}{3}$}

The argument of the proof borrows some ideas from \cite[Section 3]{JohanssonTransFluc}. Fix some $\gamma > \frac{2}{3}$ and suppose that $\overline{u}$ is a geodesic for $h^\circ_G(t,\alpha(t))$ whose graph is not entirely contained in $C^\gamma(t,\alpha(t))$. For the purpose of conveying the main idea of the argument more clearly, let us assume that $\overline{u}$ first exits $C^\gamma(t,\alpha(t))$ at time $t'=\tfrac{1}{2}t$ and that it does so from above, i.e., $\overline{u}(\tfrac{1}{2}t)=\lceil \tfrac{1}{2}\alpha(t) + (\alpha(t))^\gamma \rceil$, see Figure~\ref{fig:C-1} for an illustration. Then, if we write $k_{t,\alpha}^\gamma:=\lceil \tfrac{1}{2}\alpha(t) + (\alpha(t))^\gamma \rceil$ for simplicity, by definition of geodesic we have
\[
h^\circ_G(t,\alpha(t))= h^\circ_S(\tfrac{1}{2}t,k_{t,\alpha}^\gamma) + h_G^\circ\left((\tfrac{1}{2}t,k_{t,\alpha}^\gamma) \to (t,\alpha(t))\right),
\] for    $h_G^\circ\left((\tfrac{1}{2}t,k_{t,\alpha}^\gamma) \to (t,\alpha(t))\right)$ defined as in \eqref{eq:defhpp}. In particular, upon recalling \eqref{eq:defuk}--\eqref{eq:defvk}, the former equality implies that
\[
\frac{h^\circ_G(t,\alpha(t))-t-2\sqrt{t\alpha(t))}}{\sqrt{t(\alpha(t))^{-\frac{1}{3}}}} = \frac{1}{2^{1/3}}\left(U_{t,\alpha}(\tfrac{1}{2},k_{t,\alpha}^\gamma) +V_{t,\alpha}(\tfrac{1}{2},k_{t,\alpha}^\gamma)\right)+2R(t),
\] where 
\[
R(t)=\frac{\sqrt{(\tfrac{1}{2}t)k_{t,\alpha}^\gamma} + \sqrt{(\tfrac{1}{2}t)(\alpha(t)-k_{t,\alpha}^\gamma)} - \sqrt{t\alpha(t)}}{\sqrt{t(\alpha(t))^{-\frac{1}{3}}}}+o(1)
\] with the term $o(1)$ 
converging to $0$ in probability as $t \to \infty$. 
Now, a straightforward computation shows that, if in addition to the condition $\gamma > \frac{2}{3}$ we ask that $\gamma < 1$ (which, for the purpose of proving the result, we can do without losing generality), then 
\[
\sqrt{(\tfrac{1}{2}t)k_{t,\alpha}^\gamma} + \sqrt{(\tfrac{1}{2}t)(\alpha(t)-k_{t,\alpha}^\gamma)} - \sqrt{t\alpha(t)} \leq - \frac{1}{64}\sqrt{t(\alpha(t))^{-\frac{1}{3}}}(\alpha(t))^{2(\gamma-1)},
\] for all $t$ large enough. Indeed, this is (essentially) a consequence of the following lemma, which we shall need in its full generality to treat the case $t' \neq \frac{1}{2}$ in Section~\ref{sec:propubf}.

\begin{lemma}
\label{lemma_geodesics_2-0} Let $\alpha: \R_{\geq 0} \to \N$ satisfy $\lim_{t \to \infty} \alpha(t)=\infty$. Then, given any $\gamma \in (\frac{2}{3},1)$, if $t$ is taken sufficiently large (depending only on $\alpha$ and $\gamma$), for all $s \in [0,t(1-(\alpha(t))^{\gamma-1}]$, we have
\[
\sqrt{sk_{t,\alpha}^\gamma(s)} + \sqrt{(t- s)(\alpha(t) - k_{t,\alpha}^\gamma(s))} - \sqrt{t\alpha(t)} \leq - \frac{1}{32}\sqrt{t(\alpha(t))^{-\frac{1}{3}}} (\alpha(t))^{2(\gamma-\frac{2}{3})}.
\] where $k_{t,\alpha}^\gamma(s):=\lfloor \tfrac{s}{t}\alpha(t) + (\alpha(t))^\gamma \rfloor$.
\end{lemma}

In particular, we conclude that $R(t) \leq -\frac{1}{64}(\alpha(t))^{2(\gamma-\frac{2}{3})}+o(1)$ for all $t$ large enough and hence, since $\gamma > \frac{2}{3}$, that $\lim_{t \to \infty} R(t)=-\infty$ in probability. Finally, since 
\[
\frac{1}{2^{1/3}}\left(U_{t,\alpha}(\tfrac{1}{2},k_{t,\alpha}^\gamma) +V_{t,\alpha}(\tfrac{1}{2},k_{t,\alpha}^\gamma)\right) \overset{d}{\longrightarrow} \frac{1}{2^{1/3}} ( \TW_{\GUE} \oplus \TW_{\GUE})
\] by an argument similar to the one discussed during the previous subsection, the former implies that, unless the event $\overline{u}(\tfrac{1}{2}t)=k_{t,\alpha}^\gamma$ occurs with vanishing probability as $t \to \infty$, the sequence
\[
\left(\frac{h^\circ_G(t,\alpha(t))-t-2\sqrt{t\alpha(t))}}{\sqrt{t(\alpha(t))^{-\frac{1}{3}}}} : t > 1\right)
\] cannot be tight, but this, in turn, contradicts the fact that it must converge to $\TW_{\GUE}$ by Theorem~\ref{theo:1}. Therefore, we conclude that the probability that any geodesic $\overline{u}$ exits $C^\gamma(t,\alpha(t))$ from above at time $t'=\tfrac{1}{2}t$ must vanish as $t \to \infty$. An analogous argument shows that the probability of exiting from below at time $t'=\tfrac{1}{2}t$ must also vanish. {Finally, to treat the general case in which the first exit time from $C^\gamma(t,\alpha(t))$ is some random time $t' \in (0,1]$, we subdivide the interval $[0,t]$ into small subintervals of (approximate) length $\frac{1}{t}$ and then repeat the previous analysis depending on which of these intervals this first exit time $t'$ is located. To conclude the argument, we will need to perform an union bound over all such subintervals and use the moderate deviation estimates from Theorem~\ref{theo:3} to deduce that the event that any geodesic exits $C^\gamma(t,\alpha(t))$ has vanishing probability as $t \to \infty$. However, when attempting to do this, we will face two issues (which are not present in \cite{JohanssonTransFluc}):
\begin{enumerate}
\item [i.] Since our moderate deviation estimates for $h_G(t,k)$ hold only for points $(t,k)$ which are close to the $t$-axis, to be able to apply these estimates successfully to each of the subintervals of the partition of $[0,t]$, we will need our curve $\alpha$ not to grow too fast: at this point is where the condition $\alpha(t)=o(t^\eta)$ for some $\eta \in (0,\frac{9}{31})$ is required. 
\item [ii.] For the union bound over all the subintervals in the partition of $[0,t]$ to be successful, we will need the moderate deviation estimates to decay fast enough: it is at this point the condition $\lim_{t \to \infty} \frac{\alpha(t)}{(\log t)^\rho}=\infty$ for all $\rho > 0$ is required.
\end{enumerate} We leave the full details of the proof to Section~\ref{sec:propubf}.}

\section{Preliminaries for the proof of Theorem~\ref{theo:1}}
\label{preliminaries.long.fluct}

In this section we prove all preliminary results needed to establish Propositions~\ref{prop:1}, \ref{prop:2} and \ref{prop:3} found in  Section~\ref{sec:outline}. Namely, we prove Lemma~\ref{lemma:1} and Lemma~\ref{lemma:2}, as well as state and prove an estimate on the tails of $\PLPP$, contained in Lemma~\ref{lemma:3} below.

\subsection{Proof of Lemma~\ref{lemma:1}}

Recall the graphical construction of the process $h_G$ in \eqref{eq:definition.osbd} in terms of the Poisson processes $(Q^{(r)})_{r \in \N}$. Let us fix any $t> 0$ and $k \in \N$ and define, for each $r=1,\dots,k$, the process $Y^{(r),t,k}=(Y^{(r),t,k}_s)_{s \in [0,t]}$ by the formula
\begin{equation}\label{eq:defy}
Y^{(r),t,k}_s:=Q^{(k-(r-1))}_{t^-} - Q^{(k-(r-1))}_{(t-s)^-}=|\{ u \in [t-s,t) : u \in Q^{(k-(r-1))}\}|,
\end{equation} where $Q^{(k-(r-1))}_{0^-}:=0$ by convention. Now, by time reversibility of the Poisson process, we have
\[
(Y^{(r),t,k}_s : s \in [0,t]\,,\,r=1,\dots,k)\overset{d}{=} (Q^{(r)}_s : s \in [0,t]\,,\,r=1,\dots,k),
\] so that, in particular, to obtain Lemma~\ref{lemma:1} it suffices to show, for any $k \in \N$ and $t > 0$, that
\begin{equation}
    \label{eq:rep1}
h_G(t^-,k)=\sup_{u \in \cU(t,k)} \left[ \sum_{r=1}^{u(t)} (Y^{(r),t,k}_{u_r} - Y^{(r),t,k}_{u_{r-1}}) + G(k-u(t)+1)\right],
\end{equation} for $\cU(t,k)$ defined as in \eqref{eq:defu} with respect to the sequence $(Y^{(r),t,k})_{r=1,\dots,k}$ given by~\eqref{eq:defy}, 
see Figure~\ref{fig:graph_const} for an illustration.

\begin{figure}
    \centering
    \includegraphics[width=0.4\linewidth]{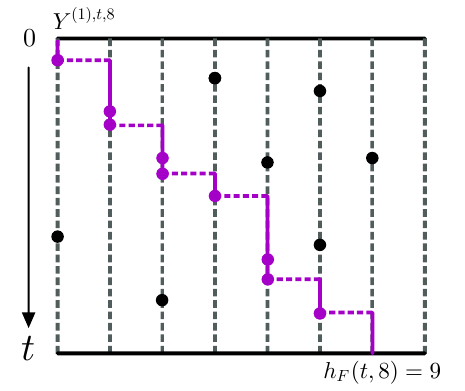}
\includegraphics[width=0.45\linewidth]{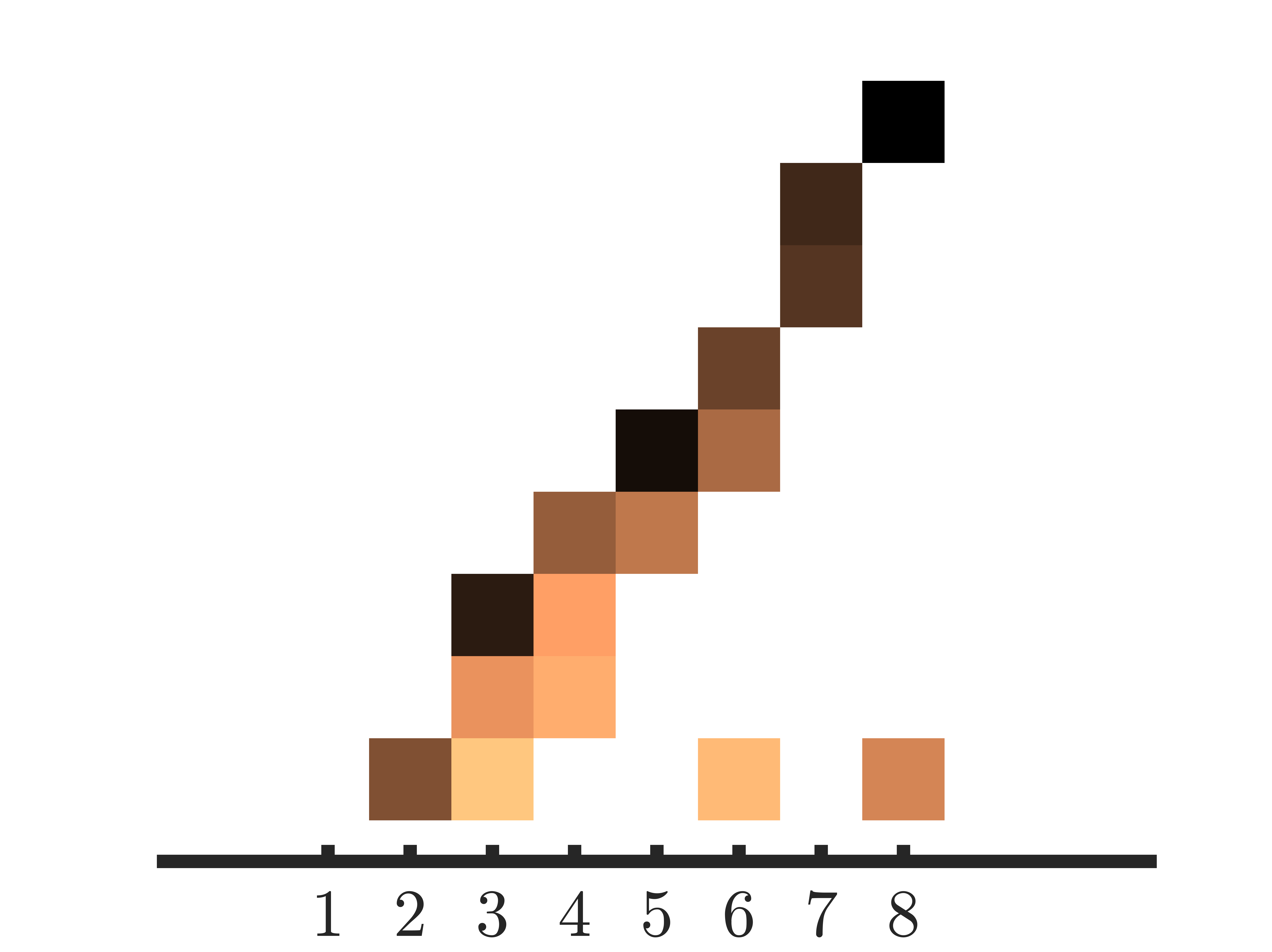}
    \caption{A realization of the Poisson process $Y$ constructed from $Q$ by reversing time and space (left) and the interface determined by this sample (right). $Q$-marks correspond to the following columns (ordered increasingly in time): 3, 6, 4, 4, 3, 8, 5, 6, 4, 2, 6, 7, 7, 3, 5, 8. The height $h_F(t,8)$ at site 8 is 9 since there is a path in $\cU(t,8)$ that collects 9 marks (and no path collects more than 9 marks). }
    \label{fig:graph_const}
\end{figure}

We proceed by induction on $k$. If $k=1$, then, since by definition $\cU(t,1)$ contains only the path $\overline{u}$ constantly equal to $1$ on $[0,t]$ for which we clearly have $\overline{u}_0=0$ and $\overline{u}_1=t$, the right-hand side of \eqref{eq:rep1} in this case is equal to
\[
Y^{(1),t,1}_t - Y^{(1),t,1}_0 + G(1) = Q^{(1)}_{t^-} + G(1) =h_G(t^-,1)
\] and so \eqref{eq:rep1} holds.

Now, assume that \eqref{eq:rep1} holds for some fixed $k \in \N$ and all $t > 0$. We~wish to check that then the analogous statement holds for $k+1$. To this end, consider an enumeration $q_1 < q_2 < \dots$ of the discontinuity points of $Q^{(k+1)}$ and let $\tau_t :=\sup \{ j ; q_j < t\}$ denote the index of the last discontinuity of $Q^{(k+1)}$ in $[0,t)$, with the convention $\sup \emptyset := 0$, so that $\tau_t=0$ if $Q^{(k+1)}$ has no discontinuities in $[0,t)$. Then, by definition of $h_G$, 
\[
h_G(t^-,k+1)=h(q_{\tau_t},k+1)=\begin{cases}G(k+1) & \text{ if $\tau_t=0$}\\ 1+ \max\{h_G(q_{\tau_t}^-,k),h_G(q_{\tau_t}^-,k+1)\} & \text{ if $\tau_t > 0$.}\end{cases}
\] In order to check that \eqref{eq:rep1} holds for $k+1$ and all $t > 0$, we may proceed by induction on the value~of~$\tau_t$. Indeed, if $\tau_t=0$, then $\cU(t,k+1)$ again consists only of the path $\overline{u}$ which is constantly equal to $1$ and $Q^{(k+1)}_{t^-}=0$, so that the right-hand side of~\eqref{eq:rep1} (with $k+1$ in place of $k$) becomes 
\[
Y^{(1),t,k+1}_t-Y^{(1),t,k+1}_0+G(k+1)=Q^{(k+1)}_{t^-}+G(k+1)=h_G(t^-,k+1).
\] Now, if \eqref{eq:rep1} holds for all $t > 0$ with $\tau_t = j$ for some $j \in \N_0$, then for any $s > 0$ with $\tau_s =j+1$ we have 
\begin{equation}\label{eq:rep5}
h_G(s^-,k+1)=1+ \max\{h_G(q_{j+1}^-,k),h_G(q_{j+1}^-,k+1)\}
\end{equation} Observe that, if we write $u_0^*:=0$ and $u^*_1:=s-q_{j+1}$, then we have
\begin{equation}\label{eq:rep2}
Y^{(1),s,k+1}_{u_1^*}-Y^{(1),s,k+1}_{u_0^*}=Q^{(k+1)}_{s^-}-Q^{(k+1)}_{q_{j+1}^-}=1.
\end{equation} On the other hand, since $\tau_{q_{j+1}}=j$, by induction hypothesis we have
\begin{equation}\label{eq:rep3}
h_G(q_{j+1}^-,k)=\sup_{u \in \cU(q_{j+1},k)} \left[ \sum_{r=1}^{u(t)} (Y^{(r),q_{j+1},k}_{u_r} - Y^{(r),q_{j+1},k}_{u_{r-1}}) + G(k-u(t)+1)\right],
\end{equation} and
\begin{equation}\label{eq:rep4}
h_G(q_{j+1}^-,k+1)=\sup_{u \in \cU(q_{j+1},k+1)} \left[ \sum_{r=1}^{u(t)} (Y^{(r),q_{j+1},k+1}_{u_r} - Y^{(r),q_{j+1},k+1}_{u_{r-1}}) + G(k-u(t)+2)\right].
\end{equation} Upon noticing that
\[
Y^{(r),q_{j+1},k+1-l}_{u} - Y^{(r),q_{j+1},k+1-l}_{u'} = Y^{(r+l),s,k+1}_{u+u_1^*} - Y^{(r+l),s,k+1}_{u'+u_1^*} 
\] for all $u,u' \in [0,q_{j+1}]$, $l \in \{0,1\}$ and $r=1,\dots,k+1-l$, and also that
\begin{equation}\label{eq:split}
\begin{split}
\cU(s,k+1)=&\,\{ (0,u_1^*,u_1^*+u_1,\dots,u_1^*+u_m) : (u_0,\dots,u_m) \in \cU(q_{j+1},k)\}\\ & \cup \{ (0,u_1^*+u_1,\dots,u_1^*+u_m) : (u_0,\dots,u_m) \in \cU(q_{j+1},k+1)\},
\end{split}  
\end{equation} where above we identify each path in $u \in \cU(q,r)$ with its associated vector $(u_0,\dots,u_r)$ (see Figure~\ref{fig:2-S} for an illustration of the equality in~\eqref{eq:split}), by combining \eqref{eq:rep2}--\eqref{eq:rep3}--\eqref{eq:rep4} it is straightforward to check that the right-hand side of \eqref{eq:rep5} equals
\[
\sup_{u \in \cU(s,k+1)} \left[ \sum_{r=1}^{u(t)} (Y^{(r),s,k+1}_{u_r} - Y^{(r),s,k+1}_{u_{r-1}}) + G(k+1-u(t)+1)\right],
\] so that \eqref{eq:rep1} follows (for $k+1$ and all $s > 0$ with $\tau_s=j+1$). This concludes the proof.

\begin{figure} 
\begin{center}
\includegraphics[scale=.75]{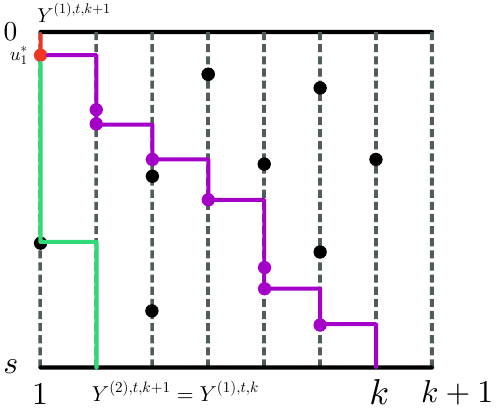}
\caption{\textbf{Equality in \eqref{eq:split}}. The picture shows the two types of paths in $\cU(s,k+1)$: a path can either (1) jump at $u_1^*$, the first Poisson point it encounters, and then continue from the next column onward as a path in $\cU(q_{j+1}, k)$; or (2) choose not to jump at $u_1^*$ and then continue from the first column onward as a path in $\cU(q_{k+1},k+1)$. The first option is depicted as a purple path, the second as a green path. The common part of both paths is colored in red.}\label{fig:2-S}
\end{center}
\end{figure}

\subsection{Proof of Lemma~\ref{lemma:2}}

Below we restate the strong approximation theorem by Komlós, Major and Tusnády adapted to our situation. Since the Poisson distribution has exponential moments of all orders, it reads as follows.

\begin{theorem}[Theorem~1 in \cite{Komlos}]
\label{theo:RWbound}
Let $(X_i)_{i \in \N}$ be i.i.d. Poisson random variables with $\E(X_i) =1$. Then, there exist constants $C_1, K_1, \lambda_1 >0$ such that, for each $N \in \N$, there exists a coupling of $(X_1,...,X_N)$ with a standard Brownian Motion $(B_t)_{t \geq 0}$ such that, for all $x \geq C_1 \log N$,
\begin{equation}\label{eq:pbrwbound}
    \P \lp \max_{n = 1,...,N} \left| \sum_{i = 1}^{n} X_i - n - B_n \right| >x \rp \leq K_1 N^{\lambda_1} \rme^{-\lambda_1 x}.
\end{equation}
\end{theorem}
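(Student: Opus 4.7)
The plan is to derive this as a direct corollary of the original Komlós–Major–Tusnády strong approximation theorem for sums of i.i.d.\ random variables from \cite{Komlos}, whose hypothesis asks that the common distribution be centered, have unit variance, and admit a finite moment generating function in a neighborhood of the origin. Under those assumptions, KMT constructs, for each $N \in \N$, a joint realization of the partial sums and a standard Brownian motion such that the maximum discrepancy over $n \leq N$ has exponentially decaying tails with only a logarithmic additive correction in $N$. Our task is therefore to verify that the Poisson variables at hand satisfy these hypotheses once centered, and then to massage the resulting bound into the form \eqref{eq:pbrwbound}.

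First I would center the variables by setting $\tilde X_i := X_i - 1$, so that $\E \tilde X_i = 0$ and $\mathrm{Var}(\tilde X_i) = 1$. The only nontrivial hypothesis to verify is the exponential moment condition, which follows immediately from the explicit moment generating function of a Poisson$(1)$ random variable, $\E[\rme^{t X_i}] = \exp(\rme^t - 1)$, finite for every $t \in \R$. Applying the classical KMT theorem to the i.i.d.\ sequence $(\tilde X_i)_{i \in \N}$ then yields absolute constants $C, K, \kappa > 0$ (depending only on the law of a centered unit Poisson) together with, for every $N \in \N$, a coupling of $(\tilde X_1, \dots, \tilde X_N)$ with a standard Brownian motion $(B_t)_{t \geq 0}$ such that
\[
\P \lp \max_{n \leq N} \lv \sum_{i=1}^n \tilde X_i - B_n \rv > C \log N + y \rp \leq K \rme^{-\kappa y} \quad \text{for every } y \geq 0.
\]
Since $\sum_{i=1}^n \tilde X_i = \sum_{i=1}^n X_i - n$, this is already a bound on precisely the quantity appearing inside the probability in \eqref{eq:pbrwbound}.

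Finally I would repackage the inequality by setting $x := C \log N + y$, so that the condition $y \geq 0$ becomes $x \geq C \log N$; the right-hand side then rewrites as $K \rme^{-\kappa(x - C \log N)} = K N^{\kappa C} \rme^{-\kappa x}$. It remains to fold the two distinct exponents $\kappa C$ (for $N$) and $\kappa$ (for $x$) into the single parameter $\lambda_1$ demanded by the statement. This I would achieve by taking $\lambda_1 := \kappa/2$, enlarging $C_1$ beyond $C$ so that on the range $x \geq C_1 \log N$ the extra factor $\rme^{-(\kappa - \lambda_1) x} \leq N^{-(\kappa - \lambda_1) C_1}$ absorbs the polynomial $N^{\kappa C - \lambda_1}$, and adjusting $K_1$ accordingly; a short algebraic check shows that any $C_1 \geq 2C$ suffices.

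The genuine mathematical content is hidden entirely inside the KMT theorem, which we invoke as a black box, so the main obstacle here is essentially conceptual: recognizing that the classical KMT bound, stated with an additive $C \log N$ shift inside the tail, can be repackaged into the form with a prefactor $N^{\lambda_1}$ and threshold $x \geq C_1 \log N$. Beyond that, the only work is verifying the exponential moment condition (trivial for Poisson) and performing the constant-bookkeeping just described.
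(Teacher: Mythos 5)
Your proposal is correct and follows the same route as the paper, which simply quotes Theorem~1 of Komlós--Major--Tusnády adapted to the Poisson case without further argument: the centered Poisson variables satisfy the KMT moment hypotheses, and the classical bound with the additive $C\log N$ shift repackages into the stated form. Your constant bookkeeping (taking $\lambda_1=\kappa/2$ and $C_1\geq 2C$) is sound, so the only difference is that you make explicit the trivial manipulation the paper leaves implicit.
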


Theorem~\ref{theo:RWbound} can now be used to prove Lemma~\ref{lemma:2} as follows.


\begin{proof}[Proof of Lemma~\ref{lemma:2}]
    Let us fix any $t \geq 1$ and define $N:=\lfloor t \rfloor$. By Theorem~ \ref{theo:RWbound} there exists a coupling of $N$ i.i.d. Poisson random variables $X_1,...,X_N$ of mean $1$ and a standard Brownian motion $(B_s)_{s \geq 0}$ so that \eqref{eq:pbrwbound} holds for all $x \geq C_1 \log N$. By enlarging this probability space if necessary, we may assume also that in the latter there is also defined a Poisson process $(Y_s)_{s \geq 0}$ of rate $1$ such that $Y_{n}-Y_{n-1}=X_n$ for all $n=1,\dots,N$. Indeed, it is enough to extend $(X_1,\dots,X_N)$ to a sequence $(X_n)_{n \in \N}$ of i.i.d. Poisson random variables of mean $1$, consider an array $(U^{(n)}_i)_{i,n \in \N}$ of i.i.d. random variables uniformly distributed on $(0,1)$ independent of $(X_n)_{n \in \N}$, set $X_0:=0$ and then, for $s \geq 0$, define
    \[
    Y_s :=\sum_{i=0}^{\lfloor s \rfloor} X_i + \sum_{i=1}^{X_{\lceil s \rceil}} \mathbf{1}_{\{U^{(\lceil s \rceil)}_i \leq s-\lfloor s \rfloor\}}. 
    \]
    Next, fix $x \geq 2C_1 \log N$. Then, if we let $A^{(N)}_x$ denote the event on the left-hand side of \eqref{eq:pbrwbound} and we write $Z_s:=Y_s-s$ for each $s\geq 0$, then by the union bound we obtain that the probability on the left-hand side of \eqref{eq:rwbound2} is bounded from above by
    \begin{equation}\label{eq:rwbound3}
    \sum_{n=1}^{N+1} \P\lp \bigg\{ \sup_{s \in [n-1,n]} |Z_s-B_s| > x\bigg\} \setminus A^{(N)}_{\frac{x}{2}}\rp + \P( A^{(N)}_{\frac{x}{2}}). 
    \end{equation} The rightmost probability in this last display is bounded from above by $K_1N^{\lambda_1}\rme^{-\frac{\lambda_1}{2}x}$ by Theorem~\ref{theo:RWbound}. Hence, it is enough to bound each summand in the sum on the left. To this end, notice that, for each $n=1,\dots,N+1$, the $n$-th summand in this sum is bounded from above by
\[
\begin{split}
   \P\Bigg( \bigg\{ \sup_{s \in [n-1,n]} |Z_s-B_s & - (Z_{n-1}-B_{n-1})| > \frac{x}{2}\bigg\} \setminus A^{(N)}_{\frac{x}{2}}\Bigg) \\
        & \leq \P\lp \sup_{s \in [n-1,n]} |Z_s- Z_{n-1}|> \frac{x}{4}\rp+\P\lp \sup_{s \in [n-1,n]} |B_s-B_{n-1}| > \frac{x}{4}\rp \\
        & \leq \P\lp \sup_{s \in [0,1]} |Z_s|> \frac{x}{4}\rp+\P\lp \sup_{s \in [0,1]} |B_s| > \frac{x}{4}\rp 
        \leq c\rme^{-\frac{x}{4}},
    \end{split} 
\] for some $c> 0$, where the last inequality follows from Doob's maximal inequality applied to the nonnegative submartingales $(\rme^{|Z_s|})_{s\geq 0}$ and $(\rme^{|B_s|})_{s \geq 0}$, while the second to last one does so from the Markov property. Upon summing this bound over all $n=1,\dots,N+1$ we conclude that \eqref{eq:rwbound3} is bounded by $Kt^\lambda \rme^{-\kappa x}$ for all $x \geq C\log t$, where $C:=2C_1$, $\lambda:=\max\{\lambda_1,1\}$, $K:=2c+K_1$ and $\kappa:=\min\{\frac{\lambda_1}{2},\frac{1}{4}\}$ and so \eqref{eq:rwbound2} holds.
\end{proof}

\subsection{Tail estimates for $\PLPP$}

To prove Propositions~\ref{prop:1} and \ref{prop:2}, we need the following estimate on the tail of the distribution of $\PLPP(t,k)$, which will prove most useful when considering values of $k$ which are not much smaller than $t$.
\begin{lemma} \label{lemma:3}
    There exists a constant $C > 0$ such that, for any $t > 0$ and $x,k \in \N$ with $x \geq t$, we have 
    \begin{equation}\label{eq:boundD}
        \P \lp \PLPP (t,k) \geq x \rp
        \leq C\exp \left[ x \lp \log \lp \frac{(k+x)t}{x^2} \rp +2\rp -t\right].
    \end{equation}
\end{lemma}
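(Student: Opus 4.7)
My plan is to combine a union bound over monotone level sequences with an exact computation that produces the $e^{-t}$ factor in \eqref{eq:boundD}. First I will observe that $\PLPP(t,k) \geq x$ holds if and only if there exist $x$ Poisson marks of $Y^{(1)}, \ldots, Y^{(k)}$ whose times are strictly increasing and whose levels are non-decreasing. Taking a union bound over the $\binom{k+x-1}{x}$ non-decreasing level sequences $(r_1, \ldots, r_x) \in \{1, \ldots, k\}^x$, I reduce the problem to bounding, for each such sequence, the probability $p(r_1, \ldots, r_x)$ that matching Poisson times exist.

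Next, I will compute $p(r_1, \ldots, r_x)$ exactly. Grouping consecutive equal entries, I rewrite the sequence as $(\underbrace{L_1, \ldots, L_1}_{j_1}, \ldots, \underbrace{L_m, \ldots, L_m}_{j_m})$ with $L_1 < \cdots < L_m$ and $\sum_i j_i = x$. A greedy construction shows that matching times exist iff $T_1 + \cdots + T_m \leq t$, where $T_i$ is the time for $Y^{(L_i)}$ to produce $j_i$ marks after $T_1 + \cdots + T_{i-1}$. Because the Poisson processes $Y^{(L_1)}, \ldots, Y^{(L_m)}$ live at distinct levels and are therefore independent, the strong Markov property makes the $T_i$'s independent with $T_i \sim \mathrm{Gamma}(j_i, 1)$, so $T_1 + \cdots + T_m \sim \mathrm{Gamma}(x, 1)$. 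Hence Gamma--Poisson duality gives
\[
p(r_1, \ldots, r_x) \;=\; \P\bigl(\mathrm{Gamma}(x,1) \leq t\bigr) \;=\; \P\bigl(\mathrm{Poisson}(t) \geq x\bigr),
\]
\emph{independently} of $(r_1, \ldots, r_x)$. This is the main content of the argument: a naive first-moment estimate would miss the $e^{-t}$ factor, but identifying $p(r_1,\ldots,r_x)$ with a single Poisson tail recovers it.

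Finally, I will assemble the pieces. The standard Chernoff estimate $\P(\mathrm{Poisson}(t) \geq x) \leq e^{-t}(et/x)^x$, valid thanks to the hypothesis $x \geq t$, combined with the elementary inequality $\binom{k+x-1}{x} \leq (e(k+x)/x)^x$, yields
\[
\P\!\left(\PLPP(t,k) \geq x\right) \;\leq\; \binom{k+x-1}{x}\,\P\!\left(\mathrm{Poisson}(t) \geq x\right) \;\leq\; \exp\!\left[\,x\!\left(\log\!\frac{(k+x)t}{x^2} + 2\right) - t\,\right],
\]
which is \eqref{eq:boundD} with $C = 1$. The only delicate step is the reduction to Gamma--Poisson duality via the greedy construction; the union bound and the final Chernoff/Stirling-type manipulations are routine.
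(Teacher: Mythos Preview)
Your proof is correct and follows essentially the same approach as the paper: both parametrize the event $\{\PLPP(t,k)\geq x\}$ by the $\binom{x+k-1}{x}$ ways to distribute $x$ marks among $k$ levels, identify the per-sequence probability with $\P(\mathrm{Gamma}(x,1)\leq t)=\P(\mathrm{Poisson}(t)\geq x)$ via a greedy/waiting-time argument, and finish with the Chernoff bound for the Poisson tail. The only cosmetic difference is that the paper bounds the binomial coefficient via Stirling's approximation (picking up an unspecified constant $C$), whereas your elementary inequality $\binom{k+x-1}{x}\leq (e(k+x)/x)^x$ yields the stated bound directly with $C=1$.
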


\begin{proof} We follow the argument in \cite{Penrose}. Given $x,k \in \N$, consider the set 
\[
\cN_{x,k} := \ls y=(y_1,...,y_k) \in \N_0^k : \sum_{r= 1}^k y_r =x\rs,
\] and, for $y \in \cN_{x,k}$, define a collection $(W^{(r)}_j : j=0,\dots,y_r\,,r=1,\dots,k)$ of random variables (whose dependence on $y$ we choose to omit from the notation for simplicity) as follows. First, set $y_0:=0$ and $W^{(0)}_0:=0$ for convenience and then, for each $r=1,\dots,k$, define recursively $W^{(r)}_0:=W^{(r-1)}_{y_{r-1}}$ and, if $y_r \neq 0$, for each $j=1,\dots,y_r$, 
\[
W^{(r)}_j:=\inf\{ t > W^{(r)}_{j-1} : t \in Y^{(r)}\}.
\] Observe that, by virtue of the independence of $(Y^{(r)})_{r=1,\dots,k}$ and their Markov property, the random variables 
\[
(W^{(r)}_j-W^{(r)}_{j-1} : j=1,\dots,y_r\,,\,r=1,\dots,k \text{ such that }y_r \neq 0)
\] are i.i.d. with Exponential distribution of mean $1$. 

\begin{figure}
 \begin{center}
 \includegraphics[scale=0.65]{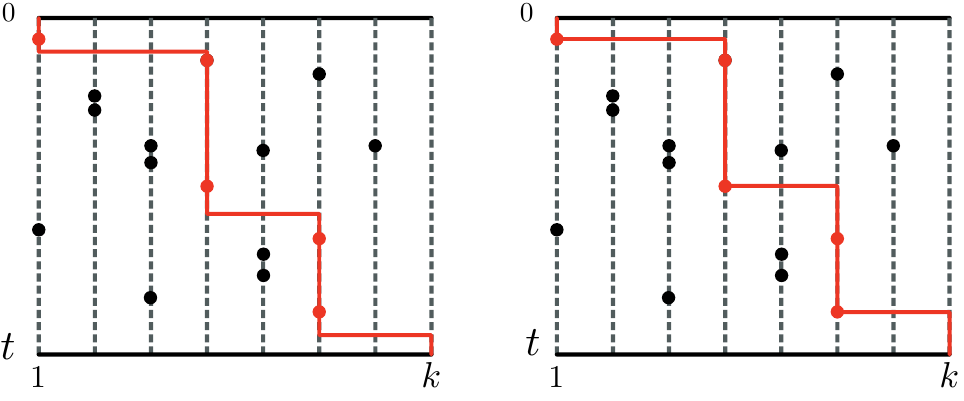}
\caption{\textbf{Illustration of the event $\PLPP(t,k) \geq x$}. The picture on the left shows a path in $\cV(t,k)$ collecting at least $x=5$ discontinuity points of the Poisson processes $Y^{(r)}$, $r=1,\dots,k$. The picture on the right shows how the path on the left can be modified so that all its jumps are at discontinuity points (but it still collects the same amount of them).}\label{fig:3-S}
\end{center} 
\end{figure}


Now, fix any $t > 0$. Note that, if $\PLPP(t,k) \geq x$, there must exist some path $v \in \cV(t,k)$ whose trajectory collects at least $x$ discontinuity points of $(Y^{(r)})_{r=1,\dots,k}$, see Figure~\ref{fig:3-S}. 
Moreover, without loss of generality we may assume that the jumps of $v$ can only occur at discontinuity points of $(Y^{(r)})_{r=1,\dots,k}$ (again, see Figure~\ref{fig:3-S} for an illustration). 
Then, if $t_x \in [0,t]$ denotes the precise time in which $v$ collects the $x$-th point, on the one hand we have
\[
(Y^{(1)}_{v_1 \wedge t_x }-Y^{(1)}_{v_0 \wedge t_x },\dots,Y^{(k)}_{v_k \wedge t_x }-Y^{(k)}_{v_{k-1} \wedge t_x }) \in \cN_{x,k}.
\] On the other hand, since the path $v$ only jumps at discontinuity points of $(Y^{(r)})_{r=1,\dots,k}$, on the event that $(Y^{(1)}_{v_1 \wedge t_x }-Y^{(1)}_{v_0 \wedge t_x },\dots,Y^{(k)}_{v_k \wedge t_x }-Y^{(k)}_{v_{k-1} \wedge t_x })=y$ for some specific $y \in \cN_{x,k}$ we have
\[
v_r \wedge t_x = v_{r-1} \wedge t_x + \sum_{j=1}^{y_r} (W^{(r)}_{j}-W^{(r)}_{j-1})
\] for all $r=1,\dots,k$, with the convention that the sum above is $0$ if $y_r=0$. It follows from these two facts that 
\begin{equation}\label{eq:suma}
\{\PLPP(s,k) \geq x\} = \bigcup_{y \in \cN_{x,k}} \left\{ \sum_{i=1}^k \sum_{j=1}^{y_r} W^{(r)}_j-W^{(r)}_{j-1} \leq t\right\},
\end{equation} which implies the bound
\begin{equation}\label{eq:bound1}
\bP(\PLPP(t,k) \geq x) \leq |\cN_{x,k}| \bP(\textrm{Poisson}(t) \geq x),
\end{equation} since the sum on the event in the right-hand side of \eqref{eq:suma} has $\Gamma(x,1)$ distribution. Now, it is a standard fact that
\begin{equation} \label{eq:combi}
    |\cN_{x,k}| = {x + k -1\choose k-1},
\end{equation}
so that, by combining \eqref{eq:bound1}--\eqref{eq:combi} with Chernoff's inequality for the Poisson distribution,
we arrive at the bound
\begin{equation}\label{eq:bound2}	
\P \lp \PLPP(t,k) \geq x \rp \leq
        \frac{(x+k-1)!}{x!(k-1)!} \rme^{-t} \lp \frac{\rme t}{x} \rp^{x}.
\end{equation}
Finally, by using Stirling's approximation and then performing some standard algebraic manipulations, the right-hand side of~\eqref{eq:bound2} can be further bounded from above by 
\[
\begin{split}C_1 \lp \frac{x+k-1}{k-1} \rp^{k-1} \lp \frac{x+k-1}{x}\rp^x \lp \frac{\rme t}{x} \rp^{x} \rme^{-t} & \leq C_2 \lp 1+\frac{x}{k-1}\rp^{k-1} \lp\frac{(x+k)\rme t}{x^2} \rp^x \rme^{-t}
\\
& \leq C_3 \exp \left[ x \lp \log \lp \frac{(k+x)t}{x^2} \rp +2\rp -t\right]
\end{split}
\] for some absolute constants $C_1,C_2,C_3 > 0$, where, to obtain the last inequality, we used the standard bound $1+\theta \leq \rme^{\theta}$ for all $\theta \in \R$. This gives \eqref{eq:boundD} and concludes the proof.
\end{proof}

\section{Longitudinal fluctuations: proof of Theorem~\ref{theo:1}}\label{sec:proofoftheo1}

In this section, we prove Theorem~\ref{theo:1}. We start by proving Propositions~\ref{prop:1}, \ref{prop:2} and \ref{prop:3} found in the outline of the proof in Section~\ref{sec:outline}, and then use these results to conclude the proof of Theorem~\ref{theo:1}.

\subsection{Proof of Proposition~\ref{prop:1}}
\label{sec:prop1}

Consider a sequence $Y=(Y^{(r)})_{r \in \N}$ of independent Poisson processes $Y^{(r)}=(Y^{(r)}_s)_{s \geq 0}$ of rate $1$ and, for $t > 0$, $k \in \N$ and $G \in \cI$, define 
\[
h^\circ_G(t,k):= \sup_{u \in \cU(t,k)}\left[ H(u,Y) + G(k-u(t)+1)\right].
\] By Lemma~\ref{lemma:1}, we have that $h^\circ_G(t,k)\overset{d}{=}h_G(t,k)$. Furthermore, by definition of $h^\circ_G(t,k)$ it is straightforward to verify that for any pair $G_1,G_2 \in \cI$ such that $G_1 \leq G_2$ we have $h^\circ_{G_1}(t,k) \leq h^\circ_{G_2}(t,k)$, so that, in particular, 
\begin{equation}
\label{eq:mayor}
\sup_{G \in \cI} |h^\circ_G(t,k)-h^\circ_F(t,k)| \leq h^\circ_F(t,k)-h^\circ_S(t,k).
\end{equation}  In light of~\eqref{eq:mayor}, to obtain Proposition~\ref{prop:1} it is enough to show that there exist constants $c_1,c_2,t^* > 0$ such that, for any $t > t^*$ and all $k \in \N$ with $k \log t \leq t$, under this coupling we have that, for all $x > 0$,
\begin{equation}\label{eq:cotag}
\P\Big(h^\circ_F(t,k) - h^\circ_S(t,k) > x\Big) \leq \rme^{c_1 k \log t - c_2 x} + \rme^{-\frac{1}{2}k\log t}.
\end{equation} 

To this end, let us fix $t > \rme$ and $k \in \N$ with $k \log t \leq t$ and define a sequence $(\tau_j)_{j \in \N_0}$ of stopping times as follows. First, we set $\tau_0:= t - k\log t \geq 0$ and then, for each $j \in \N$, we define recursively
\[
\tau_{j}:=\inf\{ s > \tau_{j-1} : s \in Y^{(j)}\}.
\] By the independence of $(Y^{(r)})_{r \in \N}$ and the Markov property, it follows that $W=(W_s)_{s \geq 0}$ given by
\[
W_s:= \sum_{j \in \N} \mathbf{1}_{\{ \tau_j \leq s+\tau_0\}},
\] is a Poisson process of rate $1$. In addition, whenever $W_{k\log t} \geq k-1$ we have $\tau_{k-1} \leq t$, so that there exists at least one path $u \in \cU(t,k)$ such that $u(t)=k$. Indeed, we may~take the path $\widehat{u}$ which, for each $j=1,\dots,k-1$, jumps from $j$ to $j+1$ at time $\tau_j$. In particular, we see that $h_S(t,k) > -\infty$ whenever $W_{k\log t} \geq k-1$. 

Now, fix any $u \in \cU(t,k)$ and, on the event $\{W_{k\log t} \geq k-1\}$,  define a path $u':[0,t] \to \N$ by the formula
\[
u'(s):=\begin{cases} u(s) & \text{ if $s < \tau_0$} \\
								u(\tau_0^-) & \text{ if $s \in [\tau_0,\tau_{u(\tau_0^-)})$}\\
								\widehat{u}(s) & \text{ if $s \in (\tau_{u(\tau_0^-)},t]$}.\end{cases}
\] Put into words, $u'$ is the path that follows $u$ until time $\tau_0$, then stays at its position at time $\tau_0$ until it is intersected by $\widehat{u}$, after which it follows $\widehat{u}$ until time $t$. By construction, it is straightforward to check that $u' \in \cU(t,k)$ and $u'(t)=k$. Moreover, in the notation of \eqref{eq:defhy}, we have
\[
H(u,Y)-H(u',Y)\leq \int_{\tau_0}^t \mathrm{d}Y^{(u(s))}_s.
\] Finally, since the restriction of $u$ to the time interval $[\tau_0,t]$ belongs to the set of paths
\[
\cV([\tau_0,t],k):=\{ v:[\tau_0,t] \to \N \,|\,v \text{ c\`adl\`ag increasing}\,,\,v(t)=k\},
\] we obtain 
\[
H(u,Y)-H(u',Y) \leq \sup_{v \in \cV([\tau_0,t],k)} H(v,Y)=:\PLPP([\tau_0,t],k).
\] Since this bound is independent of $u \in \cU(t,k)$, this implies that 
\[
h^\circ_F(t,k)-h^\circ_S(t,k) \leq \PLPP([\tau_0,t],k),
\] which gives the bound
\begin{equation}\label{eq:descom}
\bP\Big(h^\circ_F(t,k) - h^\circ_S(t,k) > x \Big) \leq \bP(W_{k\log t} < k-1) + \bP\Big( \PLPP([\tau_0,t],k) > x\Big).
\end{equation}
By translation invariance of the Poisson process, we have that $\PLPP([\tau_0,t],k)\overset{d}{=}\PLPP(k\log t,k)$ so that, since $t > \rme$, if $c$ is  sufficiently large (depending only on the value of $C$ in \eqref{eq:boundD}), then, by Lemma~\ref{lemma:3} we have that, for all $x > c k \log t$,
\begin{equation}\label{eq:cotad}
\bP\Big( \PLPP([\tau_0,t],k) > x \Big) \leq \rme^{-x}.
\end{equation}  On the other hand, by Chernoff's bound for the Poisson distribution, 
\begin{equation}\label{eq:cotay}
\bP(W_{k\log t} < k-1) \leq \rme^{-k\log t} \left(\frac{\rme k\log t}{k}\right)^k \leq \rme^{-\frac{1}{2}k\log t}
\end{equation} for all $t$ large enough. Thus, by combining \eqref{eq:cotad} and \eqref{eq:cotay}, in light of \eqref{eq:descom} we immediately obtain the result with $c_1:=\max\{c,1\}$, $c_2:=1$ and $t^*\geq \rme $ large enough so that \eqref{eq:cotad} holds.

\subsection{Proof of Proposition~\ref{prop:2}}
\label{sec:prop2}

Consider a sequence $Y=(Y^{(r)})_{r \in \N}$ of independent Poisson processes of rate $1$ and, for each $t > 0$ and $k \in \N$, define
\[
h^\circ_F(t,k):= \sup_{u \in \cU(t,k)} H(u,Y) 
\] and
\[
\PLPP^\circ(t,k):= \sup_{v \in \cV(t,k)} H(v,Y).
\] By Lemma~\ref{lemma:1}, $(h^\circ_F(t,k),\PLPP^\circ(t,k))$ is a coupling of $h_F(t,k)$ and $\PLPP(t,k)$. Hence, to prove Proposition~\ref{prop:2} it is enough to show that there exist constants $c_1,c_2,t^* > 0$ such that, for any $t>t^*$ and $k \in \N$ with $k \leq t$, for all $x > 0$ we have
\[
\bP( |h^\circ_F(t,k) - \PLPP^\circ(t,k)| > x) \leq \rme^{c_1 \log t - c_2 \frac{x}{k}}.
\] To this end, we first observe that, for any $t > 0$ and $k \in \N$, we have $h^\circ_F(t,k) \leq \PLPP^\circ(t,k)$ since $\cU(t,k) \subseteq \cV(t,k)$ by definition. Hence, it is enough to show that, for each $x > 0$ and $t > 1$, there exists some event $\Omega_{t,k}^x$ with $\P(\Omega_{t,k}^x) \leq \rme^{c_1 \log t - c_2 \frac{x}{k}}$ for all $t$ large enough such that, on the complement of $\Omega^x_{t,k}$, for any $v \in \cV(t,k)$ there exists some $u \in \cU(t,k)$ satisfying
\begin{equation}\label{eq:compara1}
H(v,Y) - H(u,Y) \leq x.
\end{equation}

Thus, if we fix $t > 1$ and $k \in \N$ with $k \leq t$, given $v \in \cV(t,k)$ let us proceed to construct a path $u \in \cU(t,k)$ as in \eqref{eq:compara1}. Notice that if $k=1$ then $\cV(t,k)=\cU(t,k)=\{\overline{u}\}$, where $\overline{u}$ denotes the path constantly equal to $1$ on $[0,t$], so that \eqref{eq:compara1} immediately holds. Hence, for the rest of the proof we assume $k > 1$.

\begin{figure}
 \begin{center}
 \includegraphics[scale=.8]{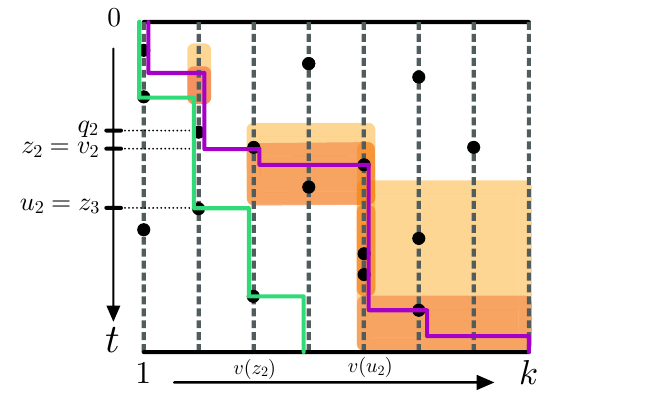}
\caption{\textbf{Construction of the path $u \in \cU(t,k)$ satisfying \eqref{eq:compara1}}. The picture shows, for a given path $v \in \cV(t,k)$ (colored in purple), how to construct a path $u \in \cU(t,k)$ (colored in green) so that \eqref{eq:compara1} holds. The rectangles $[z_r,u_r] \times [v(z_r),v(u_r)]$ are colored in dark orange, while the enlarged rectangles $R_r$ are colored in light orange (although $R_3$ is barely visible because it overlaps with $[z_2,u_2] \times [v(z_2),v(u_r)]$). All ``additional'' points collected by the path $v$ are necessarily contained in the dark orange rectangles.}\label{fig:4-S}
\end{center}
\end{figure}

Recalling that any path $u \in \cU(t,k)$ is uniquely characterized by its vector $(u_0,\dots,u_k)$ (except for the precise value of $u(t)$) , we define $u$ as the unique path in $\cU(t,k)$ which satisfies $u(t)=u(t^-)$, $u_0:=0$ and then, for $r=1,\dots,k$, 
\[
u_r:=\inf \{ s > \max\{u_{r-1},v_r\} : s \in Y^{(r)}\} \wedge t, 
\] see Figure~\ref{fig:4-S} for an illustration. Notice that, if we abbreviate $z_r:=\max\{ u_{r-1},v_r\}$, then by construction the paths $u$ and $v$ disagree precisely during the time intervals $[z_r,u_r)$ with $r=1,\dots,k$ (some of which may be empty if $z_r=u_r$), so that
\[
H(v,Y)-H(u,Y) \leq \sum_{r=1}^k \int_{z_r}^{u_r} \mathrm{d}Y^{(v(s))}_s.
\] Furthermore, observe that during the time interval $[z_r,u_r]$ the graph of $v$ is contained in the rectangle $[z_r,u_r] \times [v(z_r),v(u_r)]$ which is disjoint from $[0,t] \times [1,r]$, see Figure~\ref{fig:4-S}. 
In particular, if we define $q_r:=\sup\{ s \in [0,u_r) : s=0 \text{ or } s \in Y^{(r)}\} \in [0,z_r]$ and consider instead the (time-)enlarged rectangle $
R_r:=[q_r,u_r] \times [v(z_r),v(u_r)]$ (the reason for doing this will be clarified later on), then this last observation above yields the bound
\[
\int_{z_r}^{u_r} \mathrm{d}Y^{(v(s))}_s \leq \sup_{w \in \cV(R_r)} H(w,Y)=:\PLPP(R_r),
\] where, for a given rectangle $R:=[q,q'] \times [l_1,l_2]$ with $q,q' \geq 0$ and $l_1,l_2 \in \N$, we denote 
\[
\cV(R):=\{ w:[q,q'] \to \N \,|\,w \text{ c\`adl\`ag increasing}\,,\,l_1\leq w(s) \leq l_2 \text{ for all }s \in [q,q']\}.
\] Thus, if for $r=1,\dots,k-1$ we enumerate the points in $Y^{(r)} \cup \{0\}$ (where we identify~$Y^{(r)}$ with its set of discontinuity points) as 
$0=:q_0^{(r)} < q_1^{(r)} < \dots,$ then consider the collection of rectangles
\[
\cR^{(r)}_t:=\{ [q^{(r)}_i,q^{(r)}_{i+1} \wedge t] \times [l_1,l_2] : i \in \{0,\dots,Y_t\} \text{ and }r+1 \leq l_1 \leq l_2 \leq k\}
\] 
and finally define the event $\Omega_{t,k}^x$ as
\[
\Omega_{t,k}^x:= \bigcup_{r=1}^{k-1} \bigcup_{R \in \cR^{(r)}_t} \left\{ \PLPP^\circ(R) > \textrm{width}(R)\frac{x}{2k}\right\},
\] where for a rectangle $R:=[q_1,q_2] \times [l_1,l_2]$ we define its width as $\textrm{width}(R):=l_2-l_1+1$,
then on the complement of the event $\Omega_{t,k}^x$ we have that
\[
\sum_{r=1}^k  \int_{z_r}^{u_r} \mathrm{d}Y^{(v(s))}_s \leq \frac{x}{2k} \sum_{r=1}^k (v(u_r)-v(z_r)+1) \leq x, 
\] since $v \in \cV(t,k)$ and $z_r \geq u_{r-1}$ by definition. The definition of $\Omega_{t,k}^x$ does not depend on the particular choice of $v$ (this is why we consider enlarged rectangles). Therefore, to conclude the proof it only remains to show that, for some suitable constants $c_1,c_2>0$, we have $\bP(\Omega_{t,k}^x) \leq \rme^{c_1 \log t - c_2 \frac{x}{k}}$ for all $t$ large enough.

To this end, notice that the union bound gives the estimate
\begin{equation}\label{eq:ub1}
\bP(\Omega_{t,k}^x) \leq \sum_{r=1}^{k-1}  \bP\left( \bigcup_{R \in \cR^{(r)}_t} \left\{\PLPP^\circ(R) > \textrm{width}(R)\frac{x}{2k}\right\}\right).
\end{equation} By conditioning on $Y^{(r)}$, the $r$-th term on the right-hand side of \eqref{eq:ub1} becomes less than
\[
 \sum_{r+1 \leq l_1 \leq l_2 \leq k} \E \left( \sum_{i=0}^{Y^{(r)}_t} \bP\left(\PLPP^\circ([q^{(r)}_i,q^{(r)}_{i+1}] \times [l_1,l_2]) > (l_2-l_1+1))\frac{x}{2k}\,\bigg|\, Y^{(r)} \right) \right),
\] where, to replace $q^{(r)}_{i+1} \wedge t$ by $q^{(r)}_{i+1}$, we used that $\PLPP^\circ([q,q']\times [l_1,l_2])$ is increasing in $q'-q$. Since $(q^{(r)}_{i+1}-q^{(r)}_i)_{i \in \N_0}$ are i.i.d. Exponential random variables with mean $1$ independent of $(Y^{(m)})_{m\geq r+1}$, by translation invariance (in $r$) of the sequence $(Y^{(r)})_{r \in \N}$ we can~bound the last display from above by
\begin{equation}\label{eq:exp}
 \sum_{r+1 \leq l_1 \leq l_2 \leq k} \E \left( (Y^{(r)}_t+1) \bP\left(\PLPP(W, l_2-l_1+1)  > (l_2-l_1+1)\frac{x}{2k}\right) \right),
\end{equation} where $W$ is an Exponential random variable with mean $1$ independent of $(Y^{(r)})_{r \in \N}$.
If, for $\delta \in (0,\frac{1}{2})$, we split into cases depending on whether $W > \delta\frac{x}{k}$ or not, we can bound the probability inside the expectation in \eqref{eq:exp} from above by
\begin{equation}\label{eq:exp2}
\bP\Big(W > \delta \frac{x}{k}\Big) + \bP\Big( \PLPP\Big(\delta\frac{x}{k},l_2-l_1+1\Big) > (l_2-l_1+1)\frac{x}{2k}\Big),
\end{equation} where, once again, we use the fact that $\PLPP(q,l)$ is increasing in $q$. Now, on the one hand we have $\bP(W>\delta \frac{x}{k})= \rme^{-\delta \frac{x}{k}}$ and, on the other hand, by Lemma~\ref{lemma:3} we have that, if $\delta$ is taken small enough (depending only on the value of $C$ in \eqref{eq:boundD}), the rightmost term in \eqref{eq:exp2} is bounded from above by $\rme^{-\frac{x}{2k}}$ for all $x \geq k$. Recalling that $\E(Y^{(r)}_t)=t$, combining these bounds with \eqref{eq:exp} and \eqref{eq:ub1} yields, for all $2 \leq k \leq t$ and $x \geq k$, the estimate
\begin{equation}\label{eq:boundx}
\bP(\Omega_{t,k}^{x}) \leq 2k^3(t+1)\rme^{-\delta \frac{x}{k}} \leq \rme^{6\log t - \delta \frac{x}{k}}.
\end{equation}Since the bound in \eqref{eq:boundx} above holds trivially for all $x \in (0,k]$ whenever $t \geq 2$, in light of \eqref{eq:compara1} we conclude the result with $c_1:=6$, $c_2:=\delta$ and all $t^*:=2$.  

\subsection{Proof of Proposition~\ref{prop:3}}
\label{sec:prop3}

In order to prove Proposition \ref{prop:3}, we must show the existence of constants $c_1,c_2,t^* > 0$ such that, for any $t>t^*$ and $k \in \N$, it is possible to couple $\PLPP(t,k)$ and $\BLPP(t,k)$ so that, for all $x > 0$,
\begin{equation}\label{eq:boundL1}
       \bP(  \left| \PLPP(t,k) - t - \BLPP(t,k)\right| > x) \leq \rme^{c_1k \log t - c_2 x}.
\end{equation}
To construct such coupling, let us fix any $t \geq 1$, $k \in \N$ and for each $r=1,\dots,k$ consider the coupling of a standard Brownian motion $B^{(r)}$ with a Poisson process $Y^{(r)}$ given by Lemma~\ref{lemma:2}. By eventually enlarging the probability space if necessary, we may assume that the processes $(B^{(r)})_{r=1,\dots,k}$ are all defined on the same probability space and independent, and that the same holds for the $(Y^{(r)})_{r=1,\dots,k}$. We now check that $\BLPP(t,k)$ and $\PLPP(t,k)$ defined as in \eqref{eq:defl} and \eqref{eq:defd} respectively using the coupled sequences $(B^{(r)})_{r=1,\dots,k}$ and $(Y^{(r)})_{r=1,\dots,k}$ satisfy \eqref{eq:boundL1}.

To this end, notice that, if we write $Z^{(r)}_s:=Y^{(r)}_s-s$ for each $s \in [0,t]$ and $r=1,\dots,k$, then we have
\begin{align*}
        \PLPP(t,k) &= \sup_{v \in \cV(t,k)} \sum_{r = 1}^k  Y_{v_r}^{(r)} - Y_{v_{r-1}}^{(r)}
        \\
        &=
        \sup_{v \in \cV (t,k)} \sum_{r = 1}^k  Y_{v_r}^{(r)} - v_r - \lp Y_{v_{r-1}}^{(r)} - v_{r-1} \rp + \sum_{r = 1}^k v_r - v_{r-1}
        \\
        &=
        \sup_{v \in \cV (t,k)} \sum_{r = 1}^k \left[Z_{v_r} - Z_{v_{r-1}} \right]+ t
        \\
        &=:
        J(t,k) + t.
    \end{align*}
    Hence, to show the bound, it is enough to prove that there exist $c_1,c_2,t^*> 0$ such that
    \begin{equation}\label{eq:boundG}
        \bP ( \left| J(t,k) - \BLPP(t,k )\right| > x )  \leq \rme^{c_1 k \log t - c_2 x}
   \end{equation} holds for all $x > 0$ and $t> t^*$. 
    To this end, notice that, as in \cite[Theorem~1]{Bodineau}, 
    \begin{align*} 
        |J(t,k) - \BLPP(t,k)| &= \left| \sup_{v \in \cV (t,k)} \sum_{r = 1}^k \left[Z_{v_r}^{(r)} - Z_{v_{r-1}}^{(r)} \right] - \sup_{v' \in \cV (t,k)} \sum_{r = 1}^k \left[B_{v'_r}^{(r)} - B_{u'_{r-1}}^{(r)} \right]\right|
        \\
        &\leq
        \sup_{v \in \cV(t,k)} \sum_{r = 1}^k \left| Z_{v_r}^{(r)} -B_{u_r}^{(r)} \right| + \left| Z_{u_{r-1}}^{(r)} - B_{u_{r-1}}^{(r)}\right|
        \leq
        2 \sum_{r =1}^k \sup_{s \in [0,t]} \left| Z_s^{(r)} - B_s^{(r)} \right|.
    \end{align*} Hence, by the exponential Tchebychev inequality, for any $\theta \geq 0$ we have that
\begin{align}\label{eq:boundG2}
\bP( | J(t,k) - \BLPP(t,k)| > x ) & \leq \rme^{-\frac{\theta}{2} x} \E\Big(\exp\Big\{ \theta \sum_{r =1}^k \sup_{s \in [0,t]} \left| Z_s^{(r)} - B_s^{(r)} \right|\Big\}\Big) \nonumber \\ &= \rme^{-\frac{\theta}{2}x}\left( \E\left( \exp\Bigg\{\theta \sup_{s \in [0,t]} \left| Z_s^{(1)} - B_s^{(1)} \right|\Bigg\}\right)\right)^k
\end{align} since the processes $(Z^{(r)}-B^{(r)})_{r=1,\dots,k}$ are i.i.d. 
Recalling the constants $C,\lambda$ and $\kappa$ from Lemma~\ref{lemma:2}, if we take $\theta:=\frac{\kappa}{2}$, then by this very lemma we obtain
\begin{align*}
\E\left( \exp\Bigg\{\theta \sup_{s \in [0,t]} \left| Z_s^{(1)} - B_s^{(1)} \right|\Bigg\}\right) &= \int_0^\infty \bP \Bigg(  \sup_{s \in [0,t]} \left| Z_s^{(1)} - B_s^{(1)} \right| > \frac{1}{\theta}\log s\Bigg) \mathrm{d}s\\ & \leq t^{C\theta} + Kt^\lambda \int_{t^{C\theta}}^\infty \rme^{-\frac{\kappa}{\theta} \log s}\mathrm{d}s\\ 
& = t^{C\kappa/2} + Kt^{\lambda - C\kappa/2} \leq (K+1)\rme^{c \log t}
\end{align*} for $c:=\max\{C\kappa/2, \lambda - C\kappa/2\} > 0$. In combination with \eqref{eq:boundG2}, this immediately gives~\eqref{eq:boundG} for $c_1:=c+1$, $c_2:=\frac{\kappa}{4}$ and $t^*:= \max\{ \log (K+1) ,1\}$, and thus concludes the proof. 

\subsection{Conclusion of the proof of Theorem~\ref{theo:1}}\label{sec:conclusion}

By the discussion from Section~\ref{sec:outline}, to conclude the proof of Theorem~\ref{theo:1} we only need to show that there exist $c_1,c_2,t^*> 0$ such that, given any $t > t^*$ and $k \leq \frac{t}{\log t}$, there exists a joint coupling of the variables $(h_G(t,k) : G \in \cI)$, $\PLPP(t,k)$ and $\BLPP(t,k)$ in such a way that \eqref{eq:c1}, \eqref{eq:c2} and \eqref{eq:c3} all hold.

To do this, we fix any $t \geq 1$, $k \in \N$ and for each $r=1,\dots,k$ we consider the coupling given by Lemma~\ref{lemma:2} of a standard Brownian motion $B^{(r)}$ with a Poisson process $Y^{(r)}$ of rate $1$. As discussed in the proof of Proposition~\ref{prop:3}, we may assume that the processes $(B^{(r)})_{r=1,\dots,k}$ are all defined on the same probability space and independent, and that the same holds for the $(Y^{(r)})_{r=1,\dots,k}$. We then define
\[
\widehat{\BLPP}(t,k):= \sup_{v \in \cV(t,k)} H(v,B^{(1)},\dots,B^{(k)}),
\]
\[
\widehat{\PLPP}(t,k):= \sup_{v \in \cV(t,k)} H(v,Y^{(1)},\dots,Y^{(k)}),
\] and, for $G \in \cI$,
\[
\widehat{h}_G(t,k):= \sup_{u \in \cU(t,k)} [H(u,Y^{(1)},\dots,Y^{(k)}) + G(k-u(t)+1)],
\] where $\cV(t,k)$ and $H$ are respectively given by \eqref{eq:defvtk} and \eqref{eq:defh}--\eqref{eq:defhy}, and $\cU(t,k)$ is defined as in \eqref{eq:defu} but using the coupled sequence $(Y^{(r)})_{r=1,\dots,k}$. It follows from the proofs of Propositions \ref{prop:1}--\ref{prop:2}--\ref{prop:3} that \eqref{eq:c1},\eqref{eq:c2} and \eqref{eq:c3} all hold if $t$ is taken sufficiently large, for some appropriate choice of constants $c_1,c_2 > 0$. From here, Theorem~\ref{theo:1} follows simply from the triangular inequality and the union bound.

\section{Moderate deviations: proof of Theorem~\ref{theo:3}}\label{sec:proofoftheo3}

By Theorem~\ref{theo:2} there exists some $t^* > 0$ such that, for any $t > t^*$ and $k \leq \frac{t}{\log t}$, one can couple $h_G(t,k)$ and $\BLPP(t,k)$ so that, for any $\varepsilon,x>0$,
\[
\bP( |h_G(t,k) - t - \BLPP(t,k)| > \varepsilon x \sqrt{t}k^{-1/6}) \leq \exp\{ c_1 \log t - c_2 \varepsilon x \sqrt{t} k^{-7/6}\} + \rme^{-\frac{1}{2}k \log t},
\] for some absolute constants $c_1,c_2 > 0$. 

Now, on the one hand, for any $x \leq (\frac{1}{4}k\log t)^{2/3}$ we immediately have the bound
\begin{equation}\label{eq:cotab1}
\rme^{-\frac{1}{2}k\log t} \leq \rme^{-2x^3}\mathbf{1}_{x \leq (\frac{1}{4}k\log t)^{1/3}} + \rme^{-2x^{\frac{3}{2}}}\mathbf{1}_{x > (\frac{1}{4}k\log t)^{1/3}}.
\end{equation} On the other hand, if $x \geq 1$, then, since $\lim_{t \to \infty} \frac{\alpha(t)}{t^{\frac{3}{7}}(\log t)^{-\frac{6}{7}}}=0$, for all $t$ sufficiently large (depending only on $\alpha$, $\varepsilon$, $c_1$ and~$c_2$) and any $k \in [1,\alpha(t)]$, we have
\[
c_1 \log t - c_2 \varepsilon x \sqrt{t}k^{-7/6} =\log t \left( c_1 - c_2 \varepsilon x \frac{\sqrt{t}}{k^{7/6}\log t}\right) \leq -2x\log t
\] so that, if in addition $x \leq (\log t)^2$, we obtain
\begin{equation}
\label{eq:cotab2}
\exp\{ c_1 \log t - c_2 \varepsilon x \sqrt{t} k^{-7/6}\} \leq \rme^{-2x^3}\mathbf{1}_{x \leq \sqrt{\log t}}+ \rme^{-2x^{3/2}}\mathbf{1}_{x > \sqrt{\log t}}.
\end{equation} Thus, if we abbreviate $b_{t,k}:=(\frac{1}{4}k\log t)^{1/3} \wedge \sqrt{\log t}$, then by \eqref{eq:cotab1}--\eqref{eq:cotab2} we conclude that, if $t$ is sufficiently large, then, for all $k \in [1,\alpha(t)]$ and $x \in [1, (\frac{1}{4} k \log t)^{2/3} \wedge (\log t)^2]$,
\begin{equation}\label{eq:cotab3}
\bP( |h_G(t,k) - t - \BLPP(t,k)| > \varepsilon x \sqrt{t}k^{-1/6}) \leq 2\rme^{-2x^3}\mathbf{1}_{x \leq b_{t,k}} + 2\rme^{-2x^{3/2}}\mathbf{1}_{x > b_{t,k}}.
\end{equation}
Recalling the moderate deviation estimates for $\BLPP(1,k)$ derived in \cite{Basu} (in particular, see Propositions 1.5 and 1.8 in \cite{Basu}) and the Brownian scaling relation $\BLPP(t,k)\overset{d}{=}\sqrt{t}\BLPP(1,k)$, the estimates \eqref{eq:est1}--\eqref{eq:est2} now follow at once from \eqref{eq:cotab3} by a standard computation involving the union bound.

Finally, to see \eqref{eq:est3}, we take $\varepsilon_t:=(\log t)^{-3}$ and notice that, 
if $\lim_{t \to \infty} \frac{\alpha(t)}{t^{\frac{3}{7}}(\log t)^{-\frac{24}{7}}}=0$, by repeating the same computations as above with $\varepsilon_t$ in place of $\varepsilon$ we obtain that, if $t$ is sufficiently large, then, for all $k \in [1,\alpha(t)]$ and $x \in [1, (\frac{1}{4} k \log t)^{2/3} \wedge (\log t)^2]$,
\begin{equation}\label{eq:cotab4}
\bP( |h_G(t,k) - t - \BLPP(t,k)| > \varepsilon_t x \sqrt{t}k^{-1/6}) \leq 2\rme^{-2x^{3/2}}.
\end{equation} Since $|\varepsilon_t x^{3/2}| \leq 1$ for all $x \in [1,(\log t)^2]$, \eqref{eq:est3} follows from the estimate in \cite[Lemma~7.3]{PaquetteZeitouni} and \eqref{eq:cotab4} by a computation analogous to the one yielding \eqref{eq:est1}--\eqref{eq:est2}. We omit the details.

\section{Preliminaries for the proof of Theorem~\ref{theo:4}}\label{sec:prel2}
\label{preliminaries.trans.fluct}

In this section we prove all preliminary results needed to establish Propositions~\ref{prop:fluc1} and \ref{prop:fluc2}. More precisely, we~give the proofs of Lemmas~\ref{lemma:fluc1a}, \ref{lemma:fluc1b} and \ref{lemma_geodesics_2-0}.

\subsection{Proof of Lemma~\ref{lemma:fluc1a}} We begin with the bounds for $\max_{k \in C^\gamma(t,\alpha(t);s)} U_{t,\alpha}(s,k)$. To this end, we first observe that the upper bound in \eqref{eq:maxleft} and the lower bound in \eqref{eq:maxright} are direct consequences of Theorem~\ref{theo:3}, so we only need to show the other two bounds. The proof of the upper bound in \eqref{eq:maxright} is analogous to that of \cite[Proposition~1.9]{Basu}, so we will only prove the lower bound in \eqref{eq:maxleft}, which follows by a similar argument.

We begin by noticing that, given $\varepsilon > 0$, if we set $U_{t,\alpha}^\gamma(s):=\max_{k \in C^\gamma(t,\alpha(t);s)}U_{t,\alpha}(s,k)$, then, for all $t$ large enough, we have
\[
\P\left( U^\gamma_{t,\alpha}(s) \leq -x \right) \geq \P \left( \max_{k \in C^\gamma(t,\alpha(t);s)} h^\circ_S(t,k) - st - 2\sqrt{stk} \leq -\Big(1+\frac{\varepsilon}{2}\Big){\sqrt{st(s\alpha(t))^{-1/3}}}x\right).
\] Now, define $\delta:=\frac{2}{3}-\gamma > 0$ and {let us consider the space-time point} 
\[
\vec{v}:=(v_\text{ti},v_\text{sp}):= ( st + t(\alpha(t))^{-\delta} , s\alpha(t) + (\alpha(t))^{1-\delta}),
\] where we write $v_{\text{ti}}$ and $v_{\text{sp}}$ to respectively denote the time and space coordinates of $v$. Observe on the one hand that, for all $j \in C^\gamma(t,\alpha(t);s)$, 
\begin{equation}\label{eq:desig}
h^\circ_S(v_{\text{ti}},v_{\text{sp}}) \geq h^\circ_S(st,j)+ h_S^\circ((st,j) \to \vec{v}),
\end{equation} with $h_S^\circ((st,j) \to \vec{v})$ defined as in \eqref{eq:defhpp} (but for $G=S$). On the other hand, since $\gamma < \frac{2}{3}$, it is straightforward to check by first order Taylor expansion that, for any $j \in C^\gamma(t,\alpha(t);s)$, as $t$ tends to infinity, 
\[
2\sqrt{stj} =2 \sqrt{(st)(s\alpha(t))} + x_j \sqrt{t}((\alpha(t))^{\gamma-\frac{1}{2}}+O^{(1)}_s(n_s(t))
\] and
\[
2\sqrt{(v_{\text{ti}}-st)(v_{\text{sp}}-j)} =2\sqrt{(v_{\text{ti}}-st)(v_{\text{sp}}-s\alpha(t))} -x_j\sqrt{t}(\alpha(t))^{\gamma-\frac{1}{2}}+O^{(2)}_s(n_s(t))
\] where we abbreviate $x_j:=\frac{j-s\alpha(t)}{(\alpha(t))^\gamma} \in [-1,1]$ and $n_s(t):=\sqrt{st(s\alpha(t))^{-1/3}}$. In particular, since $\sqrt{v_{\text{ti}}v_{\text{sp}}}=\sqrt{(st)(s\alpha(t))}+\sqrt{(v_{\text{ti}}-st)(v_{\text{sp}}-s\alpha(t))}$ by definition of~$\vec{v}$, we obtain~that 
\[
2\sqrt{stj}+2\sqrt{(v_{\text{ti}}-st)(v_{\text{sp}}-j)} = 2\sqrt{v_{\text{ti}}v_{\text{sp}}}+O^{(3)}_s(n_s(t)).
\] Thus, if we define the events
\[
\mathcal{A}:=\left\{ h^\circ_S(v_{\text{ti}},v_{\text{sp}})-v_{\text{ti}} - 2\sqrt{v_{\text{ti}}v_{\text{sp}}} \leq -(1+\varepsilon)n_s(t)x\right\},
\]
\[
\mathcal{B}:=\left\{ \min_{j \in C^\gamma(t,\alpha(t);s)} \left( h_S^\circ((st,j) \to \vec{v}) -(v_{\text{ti}}-st)-2\sqrt{(v_{\text{ti}}-st)(v_{\text{sp}}-j)}\right) \leq -\frac{\varepsilon}{4}n_s(t)x\right\}
\] and
\[
\mathcal{C}:=\left\{ \max_{j \in C^\gamma(t,\alpha(t);s)} (h^\circ_S(t,j) - st - 2\sqrt{stj}) \leq -\Big(1+\frac{\varepsilon}{2}\Big)n_s(t)x\right\},
\] then, for $x$ and $t$ taken sufficiently large depending only on $s,\gamma$ and $\varepsilon$, we have $\mathcal{A} \subseteq \mathcal{B} \cup \mathcal{C}$, so that
\[ 
\P (\mathcal{C}) \geq \P(\mathcal{A}) - \P(\mathcal{B}).
\] Now, since $\sqrt{v_{\text{ti}}v_{\text{sp}}^{-1/3}}=(1+o_{s.\gamma}(1))n_s(t)$ as $t \to \infty$, by Theorem~\ref{theo:3} we see~that, for all $t,x$ large enough (depending only on $\gamma$, $s$ and $\varepsilon$) such that $x \leq \min\{ (\alpha(t))^{1/10},\sqrt{\log st}\}$,
\[
\P(\mathcal{A}) \geq \mathrm{e}^{-\frac{1}{12}(1+\varepsilon)^5 x^3}.
\] On the other hand, since 
\begin{equation}\label{eq:cotapa}
\frac{n_s(t)}{\sqrt{(v_{\text{ti}}-st)(v_{\text{sp}}-j)^{-1/3}}}=(1+o(1))s(\alpha(t))^{\frac{\delta}{3}}
\end{equation} holds as $t \to \infty$ uniformly over $j \in C^\gamma(t,\alpha(t);s)$ and $h^\circ_G((st,j) \to \vec{v})\overset{d}{=}h_G^\circ(v_{\text{ti}}-st,v_{\text{sp}}-j)$ by shift invariance of the sequence $(Y^{(r)})_{r \in \N}$ and translation invariance of each $Y^{(r)}$, Theorem~\ref{theo:3} together with the union bound yields that, for all $x \geq \frac{4}{\varepsilon}$ and $t$ large enough (depending only on $\gamma$, $s$ and $\varepsilon$), 
\begin{equation*}
\begin{split}
\P(\mathcal{B}) &\leq (\alpha(t))^{\gamma}\left( \rme^{-\frac{1}{12}(1-\varepsilon) (\frac{\varepsilon s}{4})^3  (\alpha(t))^{\delta}x^3}+\rme^{-\frac{1}{12}(1-\varepsilon) (\alpha(t))^{3/10}}+\rme^{-\frac{1}{12}(1-\varepsilon)(\log t)^{3/2}}\right) \\ & \leq \rme^{-(\alpha(t))^{\delta/2}x^3}+\rme^{-\frac{1}{24}(\alpha(t))^{3/10}}+\rme^{-\frac{1}{24}(\log t)^{3/2}}
\end{split}
\end{equation*} Upon noticing that $x^6 \leq \min\{ (\alpha(t))^{3/10},(\log t)^{3/2}\}$ when $x \leq \min\{(\alpha(t))^{1/20},(\log t)^{1/4}\}$, we conclude that, for all $t,x$ are large enough (depending only on $\gamma$, $s$ and $\varepsilon$) satisfying $x \leq \min\{(\alpha(t))^{1/20},(\log t)^{1/4}\}$, we have
\[
\P(\mathcal{B}) \leq \rme^{-(\alpha(t))^{\delta/2}x^3}+2\rme^{-\frac{1}{24}x^6} \leq \frac{1}{2}\rme^{-\frac{1}{12}(1+\varepsilon)^5 x^3},
\] which, in light of \eqref{eq:cotapa}, implies the desired result since $\varepsilon$ can be chosen arbitrarily small.

The proof of the tail estimates for $\max_{k \in C^\gamma(t,\alpha(t);s)} V_{t,\alpha}(s,k)$ is essentially analogous. The only difference is that, in place of $\vec{v}$, we must now consider {the space-time point}
\[
\vec{w}:=(w_{\text{ti}},w_{\text{sp}})=(st-t(\alpha(t))^{-\delta},s\alpha(t) - (\alpha(t))^{1-\delta})
\] and, instead of \eqref{eq:desig}, use that, for all $j \in C^\gamma(t,\alpha(t);s)$,
\[
h^\circ_F(w_{\text{ti}},w_{\text{sp}}) \geq h^\circ_S(\vec{w} \to (st,j))+h^\circ_F((st,j) \to (t,\alpha(t))).
\] From here, the proof continues as in the previous case, we omit the details. 

\subsection{Proof of Lemma~\ref{lemma:fluc1b}} For simplicity, let us write $a_s:=s^{-\frac{1}{3}}$ and $b_s:=(1-s)^{-\frac{1}{3}}$. Now, let us fix $\varepsilon > 0$, $\tilde{x} \geq x^*_\varepsilon$ and write $z_{\varepsilon}^*:=z_{\varepsilon,s}(\tilde{x})$ for simplicity. Then, given $p \in (0,1)$, if $\varepsilon > 0$ is small enough so as to have $p<1-\varepsilon$, we have 
\begin{equation}\label{eq:cotaint}
\P\left( \frac{1}{a_s}U_{t} + \frac{1}{b_s} V_t  \leq - z_{\varepsilon}^* \right) \geq \int_{-(1-\varepsilon)b_s z_{\varepsilon}^*}^{-pb_sz_{\varepsilon,s}} \P\left(U_{t}   \leq - a_sz_{\varepsilon}^*- \frac{a_s}{b_s} y\right) \P( V_t \in \mathrm{d}y).
\end{equation} Since $a_sz_\varepsilon^* + \frac{a_s}{b_s}y \in [x^*_\varepsilon, (1-p)a_sz_\varepsilon^*]$ for all $y \in [-(1-\varepsilon)b_sz_\varepsilon^*,-pb_sz_\varepsilon^*]$, if $t$ is large enough so that $a_s z^*_\varepsilon \leq \beta(t)$ and $t > t^*_\varepsilon$, we can bound the right-hand side of \eqref{eq:cotaint} from below by
\begin{equation}\label{eq:cotaint2}
\rme^{-\frac{1}{12}(1+\varepsilon)(1-p)^3a_s^3(z_\varepsilon^*)^3} \P( -(1-\varepsilon)b_sz_\varepsilon^* \leq V_t \leq - pb_sz_\varepsilon^*).
\end{equation} Moreover, if in addition we take $\varepsilon$ small enough so that $\varepsilon < p$ and also $t$ large enough so that $b_sz_\varepsilon^* \leq \beta(t)$ and $t > t^*_\varepsilon$, then the inclusion $[pb_sz_\varepsilon^*,(1-\varepsilon)b_sz_\varepsilon^*]\subseteq [x_\varepsilon^*,\beta(t)]$ holds, so that \eqref{eq:cotaint2} is bounded from below by
\[
\rme^{-\frac{1}{12}(1+\varepsilon)(1-p)^3a_s^3(z_\varepsilon^*)^3} (\rme^{-\frac{1}{12}(1+\varepsilon)p^3b_s^3(z_\varepsilon^*)^3}-\rme^{-\frac{1}{12}(1-\varepsilon)^4b_s^3(z_\varepsilon^*)^3})
\] Now, if $\varepsilon$ is chosen small enough (depending only on $p$ and $s$) so that $(1+\varepsilon)p^3<(1-\varepsilon)^4$ and $z_\varepsilon^* \gg 1$ (here we use that $\tilde{x}\geq x_\varepsilon^* \geq 1$ so that $z_\varepsilon^* \to \infty$ as $\varepsilon \to 0$ uniformly over $\tilde{x}$), then 
\[
\frac{1}{2}\rme^{-\frac{1}{12}(1+\varepsilon)p^3b_s^3(z_\varepsilon^*)^3} \geq \rme^{-\frac{1}{12}(1-\varepsilon)^4b_s^3(z_\varepsilon^*)^3}
\] which, in combination with all the previous estimates, yields that, for any fixed $p \in (0,1)$, if $\varepsilon$ is taken small enough (depending only on $p$ and $s$), then
\[
\P\left( \frac{1}{a_s}U_{t} + \frac{1}{b_s} V_t  \leq - z_\varepsilon^* \right) \geq \frac{1}{2}\exp\bigg\{-\frac{1}{12}(1+\varepsilon)[(1-p)^3a_s^3+p^3b_s^3](z_\varepsilon^*)^3\bigg\}
\] for all $t$ sufficiently large (depending only on $\varepsilon$, $s$ and $\tilde{x}$). Finally, choosing 
\[
p:=\frac{1}{1+\sqrt{\frac{s}{1-s}}}
\] gives the lower bound in \eqref{eq:cotaint3}.

\subsection{Proof of Lemma~\ref{lemma_geodesics_2-0}} 
Write $k_{t,\alpha}^\gamma(s)=\frac{s}{t}\alpha(t)+ (\alpha(t))^\gamma - \Delta$, for some $\Delta \in [0,1]$. Then, for all $t$ large enough,
\[
\sqrt{sk_{t,\alpha}^\gamma(s)} + \sqrt{(t-s)(\alpha(t)-k_{t,\alpha}^\gamma(s))} - \sqrt{t\alpha(t)} = \sqrt{t\alpha(t)} F(u,v),
\] where $v:=(\alpha(t))^{\gamma-1}-\frac{\Delta}{\alpha(t)} \in (0,1)$, $u:=\frac{s}{t} \in [0,1-v]$ and  
\[
F(u,v)=\sqrt{u(u+v)} + \sqrt{(1-u)(1-u-v)}-1.
\] 

Now, for any fixed $y \in (0,\frac{3}{4})$, by Taylor expansion we have that, for $x \in [\frac{1}{3} y,1-y]$, 
\[
F(x,y)=x\sqrt{1+\tfrac{y}{x}} + (1-x)\sqrt{1-\tfrac{y}{1-x}}-1 \leq -\frac{1}{8}\left(\frac{1}{(1+\frac{y}{x})^{\frac{3}{2}}}\frac{y^2}{x}+\frac{y^2}{1-x}\right) \leq -\frac{1}{16}y^2
\] while, for $x \in [0,\frac{1}{3} y]$, 
\[
F(x,y) \leq \frac{2}{3}y + \sqrt{1-y}-1 \leq - \frac{1}{6}y,
\] which together yield that, for all $t$ sufficiently large (depending only on $\alpha$ and $\gamma$) so as to have $v \in (0,\tfrac{3}{4})$, 
\[
\max_{u \in [0,1-v]}F(u,v) \leq -\frac{1}{16}v^2.
\] From here, the result now follows by taking $t$ large enough so that in addition we have $v^2 \geq \frac{1}{2}(\alpha(t))^{2(\gamma-1)}$.

\section{Transversal fluctuations: proof of Theorem~\ref{theo:4}} \label{sec:proofoftheo4}

We now give the proof of Theorem~\ref{theo:4}. Recall that, by the discussion in the outline of the proof in Section~\ref{sec:outline2}, it is enough to prove Propositions~\ref{prop:fluc1} and \ref{prop:fluc2}. We do this below.

\subsection{Proof of Proposition~\ref{prop:fluc1}}\label{sec:proplbf}

For $t,x>0$, let us consider the event 
\[
\Omega_G(t,\alpha,x):=\left\{\frac{h^\circ_G(t,\alpha(t)) - t - 2\sqrt{t\alpha(t)}}{\sqrt{t(\alpha(t))^{-\frac{1}{3}}}} \leq -x\right\}.
\] Then, by the union bound we have, for any $\gamma> 0$ and $s \in (0,1)$,
\begin{equation}\label{eq:ubound}
\P(B_G^\gamma(t,\alpha(t);s))\leq \P(\Omega_G(t,\alpha,x)) + \P(B_G^\gamma(t,\alpha(t);s)\setminus \Omega_G(t,\alpha,x)).
\end{equation} Now, if $\overline{u} \in \Pi(t,\alpha(t))$ is any geodesic for $h^\circ_G(t,\alpha(t))$ and we abbreviate $\overline{u}_*:=\overline{u}(st)$, then by definition of geodesic we obtain that
\begin{equation}\label{eq:decomph}
h^\circ_G(t,\alpha(t)) = h_S^\circ((0,1) \to (st,\overline{u}_*)) + h^\circ_G( (st,\overline{u}_*) \to (t,\alpha(t))),
\end{equation}
where, for $t_1\leq t_2$ and $k_1 \leq k_2$, the quantity $h^\circ_G((t_1,k_1) \to (t_2,k_2))$ is defined as in \eqref{eq:defhpp}, see Figure~\ref{fig:C-2} for an illustration.
Moreover, since $G \leq F$, we have the bound 
\[
h^\circ_G( (st,\overline{u}_*) \to (t,\alpha(t))) \leq h^\circ_F( (st,\overline{u}_*) \to (t,\alpha(t))).
\]
Therefore, using \eqref{eq:decomph}, if we set $p_s:=\frac{\overline{u}_*}{\alpha(t)}$ and $q_s:=1-p_s$, then we may bound
\[
\frac{h^\circ_G(t,\alpha(t)) - t - 2\sqrt{t\alpha(t)}}{\sqrt{t(\alpha(t))^{-\frac{1}{3}}}} \leq \sqrt{ sp_s^{-\frac{1}{3}}}U_{t,\alpha}(s,\overline{u}_*) + \sqrt{ (1-s)q_s^{-\frac{1}{3}}}V_{t,\alpha}(s,\overline{u}_*) + 2R(t),
\] where
\[
U_{t,\alpha}(s,\overline{u}_*):=\frac{h^\circ_S((0,1) \to (st,\overline{u}_*)) - st - 2\sqrt{st\overline{u}_*}}{\sqrt{st(\overline{u}_*)^{-\frac{1}{3}}}}, 
\]
\[
V_{t,\alpha}(s,\overline{u}_*):=\frac{h^\circ_F( (st,\overline{u}_*) \to (t,\alpha(t))) - (1-s)t - 2\sqrt{(1-s)t(\alpha(t)-\overline{u}_*)}}{\sqrt{(1-s)t(\alpha(t)-\overline{u}_*))^{-\frac{1}{3}}}}
\] and
\[
R(t):= \frac{\sqrt{stp_s\alpha(t)} + \sqrt{(1-s)tq_s\alpha(t)} - \sqrt{t\alpha(t)}}{\sqrt{t(\alpha(t))^{-\frac{1}{3}}}}.
\] Notice that, by a straightforward calculation, we have
\[
R(t)= (\alpha(t))^{\frac{2}{3}}(\sqrt{sp_s}+\sqrt{(1-s)q_s}-1) \leq 0.
\] Hence, on the event $B_G^\gamma(t,\alpha(t);s)$ there exists at least one geodesic $\overline{u}$ for which we have $|\overline{u}_*-\alpha(t)s|\leq ((\alpha(t))^\gamma$, which implies that 
\begin{equation}\label{eq:ineqevento}
\frac{h^\circ_G(t,\alpha(t)) - t - 2\sqrt{t\alpha(t)}}{\sqrt{t(\alpha(t))^{-\frac{1}{3}}}} \leq \sqrt{sp_s^{-\frac{1}{3}}}U_{t,\alpha}^\gamma(s) + \sqrt{(1-s)q_s^{-\frac{1}{3}}}V_{t,\alpha}^\gamma(s),
\end{equation} where we define
\[
U_{t,\alpha}^\gamma(s):=\max_{k \in C^\gamma(\alpha(t);s)} U_{t,\alpha}(s,k)
\] and 
\[
V_{t,\alpha}^\gamma(s):=\max_{k \in C^\gamma(t,\alpha(t);s)} V_{t,\alpha}(s,k), 
\] with $C^\gamma(t,\alpha(t);s):=\{ k \in \N : |k -s\alpha(t)| \leq (\alpha(t))^\gamma\}$ being the cross-section of $C^\gamma(t,\alpha(t))$ at time $t'=st$. In addition, on the event $B^\gamma_G(t,\alpha(t);s)$ we also have (for $p_s$ and $q_s$ defined with respect to this particular geodesic $\overline{u}$)
\[
p_s:=s + \frac{\overline{u}_*-s\alpha(t)}{\alpha(t)} = s + O_1((\alpha(t))^{\gamma-1})
\]  and, similarly, $q_s:=1-s + O_2((\alpha(t))^{\gamma-1})$, so that
\begin{equation}\label{eq:ineqpesos}
\sqrt{sp_s^{-\frac{1}{3}}}=s^{\frac{1}{3}}+O_{1,s}((\alpha(t))^{\gamma-1}) \quad\text{and}\quad\sqrt{(1-s)q_s^{-\frac{1}{3}}}=(1-s)^{\frac{1}{3}}+O_{2,s}((\alpha(t))^{\gamma-1}),
\end{equation} where the $O_{i,s}((\alpha(t))^{\gamma-1})$ terms satisfy that there exist some $t_{\alpha,s},C_{\alpha,s} > 0$ such that, for all $t > t_{\alpha,s}$, 
\[
\max_{i=1,2}|O_{i,s}((\alpha(t))^{\gamma-1})| \leq C_{\alpha,s}(\alpha(t))^{\gamma-1}.
\]
In particular, given any $\varepsilon \in (0,s^{1/3}+(1-s)^{1/3})$, if we fix some $x'_\varepsilon$ (to be specified later) satisfying $x'_\varepsilon \geq x_\varepsilon$, with $x_\varepsilon$ as in Theorem~\ref{theo:3}, and set 
\[
z_{\varepsilon,s}:=(s^{\frac{1}{3}}+(1-s)^{\frac{1}{3}})\tfrac{1}{\varepsilon}x'_\varepsilon >  x_\varepsilon,
\] then, on the one hand, if $t$ is sufficiently large so as to have $z_{\varepsilon,s} \leq \min\{ (\alpha(t))^{1/10},\sqrt{\log t}\}$, by Theorem~\ref{theo:3} we have
\[
\P(\Omega_G(t,\alpha,z_{\varepsilon,s})) \leq \rme^{-\frac{1}{12}(1-\varepsilon)z_{\varepsilon,s}^3}.
\] On the other hand, by \eqref{eq:ineqevento}--\eqref{eq:ineqpesos} and the union bound we obtain 
\begin{equation}
\P(B_G^\gamma(t,\alpha(t);s)\setminus \Omega_G(t,\alpha,x)) \leq \P\left(s^{\frac{1}{3}}U_{t,\alpha}^\gamma(s) + (1-s)^{\frac{1}{3}}V_{t,\alpha}^\gamma(s) > -(1+\varepsilon)z_{\varepsilon,s}\right) + \textrm{E}_{t,\alpha}(s),
\end{equation}
with 
\[
\textrm{E}_{t,\alpha}(s):=\P\left( |U^\gamma_{t,\alpha}(s)| \geq \frac{\varepsilon}{2}\frac{z_{\varepsilon,s}}{C_{\alpha,s}}(\alpha(t))^{1-\gamma}\right) + \P\left( |V^\gamma_{t,\alpha}(s)| \geq \frac{\varepsilon}{2}\frac{z_{\varepsilon,s}}{C_{\alpha,s}}(\alpha(t))^{1-\gamma}\right).
\] 
Therefore, in light of \eqref{eq:ubound}, in order to obtain Proposition~\ref{prop:fluc1}, it will suffice to show~that, if $\gamma \in (0,2/3)$, then, for some appropriate choice of $\varepsilon$, 
\begin{equation}\label{eq:errorto0}
\lim_{t \to \infty} \textrm{E}_{t,\alpha}(s) = 0.
\end{equation}
and
\begin{equation}\label{eq:liminfpos}
\liminf_{t \to \infty} \P\left(s^{\frac{1}{3}}U_{t,\alpha}^\gamma(s) + (1-s)^{\frac{1}{3}}V_{t,\alpha}^\gamma(s) \leq -(1+\varepsilon)z_{\varepsilon,s}\right) > \mathrm{e}^{-\frac{1}{12}(1-\varepsilon)z_{\varepsilon,s}^3}.
\end{equation} 
But, since $\lim_{t \to \infty} \alpha(t)=\infty$ and $\gamma < 1$, \eqref{eq:errorto0} is an immediate consequence of Lemma~\ref{lemma:fluc1a}.
On the other hand, to show \eqref{eq:liminfpos}, we first note that $U^{\gamma}_{t,\alpha}(s)$ and $V^{\gamma}_{t,\alpha}(s)$ are independent, being measurable functions of the Poisson processes $(Y^{(r)})_{r \in \N}$ on the non-overlapping time intervals $[0,st]$ and $[st,t]$, respectively. Hence, by Lemma~\ref{lemma:fluc1a}, $U^{\gamma}_{t,\alpha}(s)$ and $V^{\gamma}_{t,\alpha}(s)$ satisfy the hypotheses of Lemma~\ref{lemma:fluc1b}. In particular, if we choose $\tilde{x}:=\max\{ x_\varepsilon^*,(1+\varepsilon)x_\varepsilon\}$, with $x_\varepsilon^*$ as in Lemma~\ref{lemma:fluc1b} and $x_\varepsilon$ as in Theorem~\ref{theo:3}, and in addition set $x'_\varepsilon:=\frac{\tilde{x}}{1+\varepsilon}$, then $(1+\varepsilon)z_{\varepsilon,s}=z_{\varepsilon,s}(\tilde{x})$ (in the notation of Lemma~\ref{lemma:fluc1b}) so that \eqref{eq:liminfpos} now follows from \eqref{eq:cotaint3} by choosing $\varepsilon$ small enough.

\subsection{Proof of Proposition~\ref{prop:fluc2}}\label{sec:propubf} 
We must show that,  if $\alpha$ verifies $\lim_{t \to \infty} \frac{\alpha(t)}{(\log t)^\rho} = \infty$ for all $\rho > 0$ and $\alpha(t)=o(t^\eta)$ for some $\eta \in (0,\frac{9}{31})$, then, for any $\gamma > 2/3$,  
    \begin{equation*}
        \lim_{t \rightarrow \infty} \P (A_G^\gamma (t, \alpha(t)))  = 1.
    \end{equation*}
Observe that, since $A_G^\gamma(t,k) \subseteq A_G^{\gamma^\prime}(t,k)$ for any $\gamma' \geq \gamma$, it is enough to prove this for all $\gamma \in (2/3, 2/3+\varepsilon)$ for some $\varepsilon > 0$ small, i.e., we can assume $\gamma$ is sufficiently close to~$\frac{2}{3}$. To this end, for $t>0$, $k \in \N$ define the sets 
 \begin{align*}
     C_\uparrow^\gamma(t,k) &:= \left\{ (s,n) \in [0, t] \times \N :  n - \frac{k}{t}s \leq k^\gamma \right\},
         \\  C^{\gamma}_\downarrow(t,k) &:= \left\{ (s,n) \in [0, t] \times \N :  n - \frac{k}{t}s\geq -k^\gamma \right\},  
\end{align*}
together with, for $G \in \cI$, the events
\begin{align*}
    B^{\gamma,\uparrow}_G(t,k) &:= \{\exists \,u \in \Pi_G(t,k) : \text{graph}(u) \notin C_\uparrow^\gamma(t,k)\},
     \\B^{\gamma,\downarrow}_G(t,k) &:= \{ \exists \,u \in \Pi_G(t,k) : \text{graph}(u) \notin C_\downarrow^\gamma(t,k)\}.
 \end{align*}
Notice that $(A_G(t,k))^c=B^{\gamma,\uparrow}_G(t,k) \cup B_G^{\gamma,\downarrow}(t,k)$ and, therefore, it is enough to show that
\begin{equation}\label{eq:limit1}
\lim_{t \to \infty} \P( B^{\gamma,\uparrow}_G(t,\alpha(t)))=0
\end{equation} and
\begin{equation}\label{eq:limit2}
\lim_{t \to \infty} \P( B^{\gamma,\downarrow}_G(t,\alpha(t)))=0.
\end{equation} 
Let us start with the limit in~\eqref{eq:limit1}.  Fix $t > 0$ and define a sequence of points $(z^j)_{j=0,\dots,K}$ by the formula
 \begin{equation}\label{eq:defzj}
 z^j := (z^j_{\text{time}},z^j_{\text{space}}):= \left( j\frac{M}{K}, \left\lfloor \frac{\alpha(t)}{t}\bigg(j\frac{M}{K}\bigg)+ (\alpha(t))^\gamma \right\rfloor \right),
 \end{equation}
where $K:=\lceil 2t^2(\alpha(t))^\gamma\rceil$ and $M:= t -  t (\alpha (t))^{\gamma - 1}$. Note that $z^j \in C^\gamma_\uparrow(t.\alpha(t))$ and that
\begin{equation}\label{eq:defmax}
\left\lfloor \frac{\alpha(t)}{t}\bigg(j\frac{M}{K}\bigg)+ (\alpha(t))^\gamma \right\rfloor=\max \{ n \in \N : (j\tfrac{M}{K},n) \in C^\gamma_\uparrow(t,\alpha(t))\},
\end{equation} i.e., $z^j_{\text{space}}$ is the number of the furthest column that can be reached at time $t':=j\frac{M}{K}$ by a geodesic whose graph lies in $C^\gamma_\uparrow(t,\alpha(t))$ at time $t'$. In addition, note that $M$ satisfies
\[
M:=\max \left\{ t' \in [0,t] : \frac{\alpha(t)}{t}t' + (\alpha(t))^\gamma \leq \alpha(t)\right\},
\] i.e., $M \in [0,t]$ is the largest time at which any path $\beta \in \cU(t,\alpha(t))$ can exit $C^\gamma_{\uparrow}(t,\alpha(t))$.
Then, for $j=1,\dots,K$, let
\[
D_j:=\sum_{r=1}^{\alpha(t)} Y^{(r)}_{j \frac{M}{K}} - Y^{(r)}_{(j-1)\frac{M}{K}}
\] denote the number of marks in the rectangle $((j-1)\frac{M}{K},j\frac{M}{K}] \times [1,\alpha(t)]$ of $Y=(Y^{(r)})_{r \in \N}$ (when viewed as a single Poisson process on $\R_{\geq 0} \times \N$ of intensity $1$).  

Next, suppose that $\beta \in \Pi_G(t, \alpha(t))$ is such that $\text{graph}(\beta) \notin C^\gamma_\uparrow(t,\alpha(t))$ and define
\[
s_\beta:= \inf\{ s \in [0,t]: (s,\beta(s)) \notin C^\gamma_\uparrow(t,\alpha(t))\}.
\] Notice that, by definition of $M$ and since $\beta(s)=1 \leq (\alpha(t))^{\gamma}$ for all $s$ sufficiently small by definition of $\cU(t,\alpha(t))$, we have $s_\beta \in (0,M]$. Hence, we may define
\[
j_\beta:= \max \{ j=0,\dots,K : s_\beta > j\tfrac{M}{K}\}
\] and set $z:=z^{j_\beta}$ (where $z^j$ are defined as in \eqref{eq:defzj}). 
For this choice of $z$, in light of~\eqref{eq:defmax} (and the remark in the line below) and since $|z^{j+1}_{\text{space}}-z^{j}_{\text{space}}| \leq 1$ for all $j=0,\dots,K-1$ if $t$ is sufficiently large, we obtain that
\begin{equation}
\label{geo_ineq_1}
    h^\circ_G(t, \alpha(t)) \leq
    h^\circ_F( z )+  \max_{i =1,2} h^\circ_F(z + (0,i) \rightarrow (t,\alpha(t))) + \max_{1 \leq j \leq K} D_j
\end{equation} for all $t$ sufficiently large, where $h_F^\circ(z^j):=h^\circ_F(z^j_{\text{time}},z^j_{\text{space}})$ and $h_F^\circ( (t_1,k_1) \to (t_2,k_2))$ is defined as in \eqref{eq:defhpp}. See {Figure~\ref{fig:C-1} for an illustration of \eqref{geo_ineq_1}}.  

\begin{figure}
 \begin{center}
 \includegraphics[scale=.8]{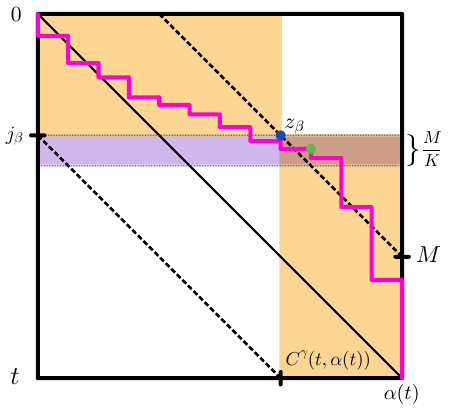}
\caption{\textbf{Illustration of \eqref{geo_ineq_1}}. The picture shows a geodesic $\beta$ for $h^\circ_G(t,\alpha(t))$, colored in pink, escaping the cylinder $C^\gamma(t,\alpha(t))$, the region contained by the two dotted lines in black. The green dot corresponds to the point $z^\circ:=(s_\beta,\beta(s_\beta))$ which marks the exact time and location in which $\beta$ escapes $C^\gamma(t,\alpha(t))$. The point $z$, indicated by a blue dot, acts as ``discrete approximation'' of $z^\circ$. In addition, the picture includes two orange rectangles, $R_1:=[0,z^{j_\beta}_{\text{time}}]\times [0,z^{j_\beta}_{\text{space}}]$ and $R_2:=[z^{j_\beta}_{\text{time}},t] \times [z^{j_\beta}_{\text{space}},\alpha(t)]$, together with a purple strip $S:=[z^{j_\beta}_{\text{time}},z^{j_\beta+1}_{\text{time}}]\times [0,\alpha(t)]$, whose overlap with $R_2$ is colored in a darker tone. The picture shows the number of marks collected by $\beta$ in $R_1$ is at most $h^\circ_F(z)$, those collected in $R_2$ is at most $h^\circ_F( z^\circ \to (t,\alpha(t))) \leq \max_{i=1,2} h(z + (0,i) \to (t,\alpha(t)))$, and those collected in $S$ is at most~$D_{j^\beta}$, which immediately yields \eqref{geo_ineq_1}.}\label{fig:C-1}
\end{center}
\end{figure}

Now, let us set $\delta:=\gamma - \frac{2}{3} \in (0,\varepsilon)$ and, for $j=0,\dots,K$ and $i=1,2$, write
\begin{align*}
\widehat{h^\circ_F}(z^j)&:= h^\circ_F (z^j) - z^j_{\text{time}} - 2\sqrt{ z^j_\text{time} z^j_{\text{space}}},
 \\
 \widetilde{h^\circ_F}(z^j;i) &:= h^\circ_F (z^j +(0,i) \rightarrow (t, \a (t))) - \lp t-z^j_{\text{time}} \rp - 2\sqrt{ \lp t - z^j_\text{time}\rp \lp \a (t) -z^j_{\text{space}} \rp},
\end{align*}
and define the events
\begin{align*}
\Theta^j_t
&:= \left\{ \widehat{h^\circ_F}(z^j) > \sqrt{z^j_\text{time} (z^j_\text{space})^{-\frac{1}{3}}}(z^j_\text{space})^{\delta}+\mathrm{e}^4f(t)\right\},
\\
\Sigma_t^j(i) &:= \ls \widetilde{h^\circ_F}(z^j;i) > \sqrt{ \lp t - z^j_\text{time}\rp \lp \a (t) -z^j_{\text{space}} \rp^{-\frac{1}{3}}} \lp \a (t) -z^j_{\text{space}} \rp^\delta + \mathrm{e}^4 f(t)\rs,
\end{align*}
where $f(t):=\sqrt{t(\alpha(t))^{\delta-\frac{1}{3}}}$, then 
\begin{equation}\label{eq:vanishu1}
\lim_{t \to \infty} \P\left(\bigcup_{0\leq j\leq K} \Theta^j_t \cup \Sigma^j_t(1) \cup \Sigma^j_t(2)\right)=0.
\end{equation} 

Indeed, on the one hand, if $j$ is such that $z^j_{\text{time}} \leq f(t)$,
then, since on $\Theta^j_t$ we have $h^\circ_F(z^j) > \mathrm{e}^4f(t)$ and also $z^j_{\text{space}} \leq 2(\alpha(t))^\gamma \leq f(t)$ for any such $j$ if $t$ is sufficiently large (depending only on $\alpha$), by the inequality $h^\circ_F(z^j) \leq L(z^j_{\text{time}},z^j_{\text{space}})$ with $L$ given by \eqref{eq:defd}, we have that, for all $t$ large,
\begin{equation}
\label{theta_j_leq_f(t)}
\P\left(\bigcup_{j\,:\, z^j_{\text{time}} \leq f(t)} \Theta^j_t\right) \leq \P\left( L\left(f(t), \lfloor f(t)\rfloor\right) > \mathrm{e}^4f(t)\right) \leq \mathrm{e}^{-f(t)}, 
\end{equation}
where the last inequality follows from Lemma~\ref{lemma:3}. 

On the other hand, for all $z^j_{\text{time}} > f(t)$, we may use the moderate deviation estimates from Theorem \ref{theo:3} to bound $\P(\Theta^j_t)$. However, notice that, in order to use Theorem~\ref{theo:3} to control $\widehat{h^\circ_F}(z^j)$, we must make sure that the point $z^j$ lies in the space-time domain in which these estimates hold, i.e., we need to guarantee that $z^j_{\text{space}} \leq \sigma (z^j_{\text{time}})$ for some curve $\sigma$ (which in the statement of Theorem~\ref{theo:3} appears as $\alpha$ but that, in principle, can be different from our current $\alpha$) satisfying $\lim_{t \to \infty} \sigma(t)=\infty$ and $\sigma(t)=o(t^{\frac{3}{7}}(\log t)^{-\frac{6}{7}})$. To this end, we consider the curve $\sigma : \R_{\geq 0} \to \N$ given by 
\[
\sigma(t):= t^{3/7} (\log(t))^{-6/7} \sqrt{r(t)},
\] with $r(t):=\max\{ (\log t)^{-2},\sup_{s \geq t} \frac{\alpha(t)}{t^{\eta}}\}$. Observe that, due to our current assumptions on $\alpha$, we have that $\lim_{t \to \infty} \sigma(t)=\infty$ and $\sigma(t)=o(t^{\frac{3}{7}}(\log t)^{-\frac{6}{7}})$. Furthermore, if $\delta=\gamma-\frac{2}{3}$ is small enough, for all $t$ large enough we have $z^j_{\text{space}} \leq \sigma (z^j_{\text{time}})$ for every $j$ such that  $z^j_{\text{time}} > f(t)$. Indeed, one the one hand, since $z^j_{\text{time}} \leq t$ and $r$ is decreasing by definition,  for all $t$ large enough so that $t^\eta \leq t^{\frac{3}{7}}(\log t)^{-\frac{6}{7}}$,
\[
\frac{\frac{\alpha(t)}{t} z^j_{\text{time}}}{\sigma(z^j_{\text{time}})} \leq \frac{\alpha(t)}{t (\log t)^{-\frac{6}{7}}} \frac{(z^j_{\text{time}})^{\frac{4}{7}}}{\sqrt{r(t)}} \leq \frac{\alpha(t)}{t^{\frac{3}{7}}(\log t)^{-\frac{6}{7}}}\frac{1}{\sqrt{r(t)}} \leq \sqrt{r(t)}.
\] On the other hand, since $z^j_{\text{time}} \geq f(t)$, by choosing $\gamma$ close enough to $\frac{2}{3}$ so that $\eta \leq \frac{3}{14\gamma +1}$, we obtain that, for all $t$ large enough,
\[
\frac{(\alpha(t))^\gamma}{\sigma(z^j_{\text{time}})} \leq \frac{(\alpha(t))^\gamma(\log t)^{\frac{6}{7}}}{(f(t))^{\frac{3}{7}}\sqrt{r(t)}} =
\left(\frac{ \alpha(t)}{t^{\frac{3}{14\gamma+1}}}\right)^{\gamma+\frac{1}{14}} \frac{(\log t)^{\frac{13}{7}}}{(\alpha(t))^{\frac{3}{14}\delta}} \leq (r(t))^{\gamma+\frac{1}{14}},
\] so that, since $r(t) \to 0$ as $t \to \infty$, upon combining the last two displays we conclude that $z^j_{\text{space}} \leq \sigma(z^j_{\text{time}})$ whenever $z^j_{\text{time}} \geq f(t)$ for all $t$ large enough if $\delta$ is small enough. Hence, by Theorem~\ref{theo:3} we obtain that, for any such $j$,
\begin{equation}
\label{theta_j^j}
\P(\Theta^j_t) \leq \P\left( \widehat{h^\circ_F}(z^j) > \sqrt{z^j_\text{time} (z^j_\text{space})^{-\frac{1}{3}}}(z^j_\text{space})^{\delta}\right) \leq \mathrm{e}^{-(\log t)^3}.
\end{equation} if $t$ is large enough since, under our current assumptions on $\alpha$, we have 
\[
(z^j_\text{space})^{\delta} \geq \min \{ C (z^j_{\text{space}})^{\frac{2}{3}},(\log t)^2\} = (\log t)^2
\] for any fixed constant $C > 0$ if $t$ is sufficiently large.

Thus, since $K \leq t^3$ for all $t$ large enough, upon combining \eqref{theta_j_leq_f(t)}--\eqref{theta_j^j} and performing an union bound over all $j=0,\dots,K$, we obtain that 
\[
\lim_{t \to \infty} \P\left( \bigcup_{j=0}^K \Theta^j_t \right)=0.
\]
In a similar fashion one can also check that
\[
\lim_{t \to \infty} \P \lp \bigcup_{j = 0}^K \Sigma_t^j(1) \cup \Sigma^j_t(2) \rp = 0,
\] which, in combination with the previous display, immediately yields \eqref{eq:vanishu1}.

In addition, since each $D_j$ is a Poisson random variable with mean $\alpha(t)\frac{M}{K}=o(1)$, then, by the exponential Tchebychev inequality, 
\begin{equation}\label{eq:vanishu2}
    \P \lp \max_{1 \leq j \leq K} D_j \geq \log(t)\rp \leq K\P( 3D_1 \geq \log (t^3)) \leq \frac{K}{t^3} \E \left[ e^{3D_1}\right] \longrightarrow 0.
\end{equation}
In light of \eqref{eq:vanishu1}-\eqref{eq:vanishu2}, in order to prove \eqref{eq:limit1} it only remains to show that
\begin{equation}
    \lim_{t \to \infty} \P( B_G^{\gamma,\uparrow,\circ}(t,\alpha(t)))=0,
\end{equation} where
\[
B_G^{\gamma,\uparrow,\circ}(t,\alpha(t)):= B_G^{\gamma,\uparrow} \setminus \left( \left\{ \max_{0 \leq j \leq K} D_j \geq \log(t) \right\} \cup \bigcup_{j=0}^K \Theta^j_t \cup \Sigma^j_t(1) \cup \Sigma^j_t(2)\right). 
\]
To this end, observe that by \eqref{geo_ineq_1} we have that, on the event $B_G^{\gamma,\uparrow,\circ}(t,\alpha(t))$, 
\[
    h^\circ_G(t,\alpha(t)) \leq t + 2\sqrt{z_\text{time}z_\text{space}}
    + 2\sqrt{(t-z_\text{time})(\alpha(t) -z_\text{space})} + \Upsilon_t+ 2\mathrm{e}^4f(t)+\log(t),
\]
where
\[
\Upsilon_t:= \sqrt{z_\text{time}(z_{\text{space}})^{-\frac{1}{3}}} (z_{\text{space}})^\delta + \sqrt{(t- z_\text{time})(\alpha(t)-z_\text{space})^{-\frac{1}{3}}} (\alpha(t)-z_\text{space})^\delta.
\]
     
Since the function $s \mapsto s(c_1s+c_2)^{-\frac{1}{3}}$ is increasing on $\R_{>0}$ for any $c_1,c_2 \geq 0$, if $\gamma < 1$, then we have $
\Upsilon_t \leq 4\sqrt{t(\alpha(t))^{-\frac{1}{3}}}(\alpha(t))^{\delta}$ for all $t$ large enough, which, since in addition $\log t \leq 2(\alpha(t))^\gamma \leq f(t) \leq \rme^{-4}\sqrt{t(\alpha(t))^{-\frac{1}{3}}}(\alpha(t))^{\delta}$ holds for all $t$ large enough by definition of $f(t)$ and our assumptions on $\alpha$, shows that on the event $B_G^{\gamma,\uparrow,\circ}(t,\alpha(t))$ one has 
\[
h^\circ_G(t,\alpha(t)) \leq t + 2\sqrt{z_\text{time}z_\text{space}}+ 2\sqrt{(t-z_\text{time})(\alpha(t) -z_\text{space})} + 7\sqrt{t(\alpha(t))^{-\frac{1}{3}}}(\alpha(t))^{\delta}.
\]
Since $z \in [0,M]$ by definition, it then follows from Lemma \ref{lemma_geodesics_2-0} that, on this same event, 
\begin{equation}
    h^\circ_G(t,\alpha(t))
    \leq t + 2\sqrt{t \alpha(t)} - \frac{1}{64}\sqrt{t \a (t)^{-1/3}} (\alpha(t))^{2 \delta} 
\end{equation} for all $t$ large enough. But, since $\lim_{t \to \infty} (\alpha(t))^{\delta}=\infty$, by Theorem~\ref{theo:1} we have that
\[
\lim_{t \to \infty} \P\left( h^\circ_G(t,\alpha(t))
    \leq t + 2\sqrt{t \alpha(t)} - \frac{1}{64}\sqrt{t \a (t)^{-1/3}} (\alpha(t))^{2 \delta} \right)=0
\] which implies that $\lim_{t \to \infty} \P(B_G^{\gamma,\uparrow,\circ}(t,\alpha(t)))=0$ and thus concludes the proof of \eqref{eq:limit1}. 

The argument to prove \eqref{eq:limit2} is almost symmetric. Let us define the sequence of points $(\tilde{z}^j)_{j=0,\dots,K}$ as $\tilde{z}^j:=(t,\alpha(t))-z^j$, for $z^j$ as in \eqref{eq:defzj}. Then, suppose that $\beta \in \Pi_G(t,\alpha(t))$ is such that $\text{graph}(\beta) \in C^\gamma_{\downarrow}(t,\alpha(t))$ and define
\[
\tilde{s}_\beta:= \sup \{ s \in [0,t] : (s,\beta(s)) \notin C^\gamma_\downarrow(t,\alpha(t))\} \in [t-M,t],
\] together with $\tilde{j}_\beta:=\min \{ j=0,\dots,K : \tilde{s}_\beta \leq t- j\frac{M}{K}\}$ and $\tilde{z}:=\tilde{z}_{\tilde{j}_\beta}$. Then, by an argument similar to the one yielding \eqref{geo_ineq_1}, one can show that, for all $t$ large enough,
\[
h^\circ_G(t,\alpha(t)) \leq h^\circ_F(\tilde{z}+(0,1)) + \max_{i=0,1} h^\circ_F(\tilde{z}-(\tfrac{M}{K},i) \to (t,\alpha(t))) + \max_{1 \leq j \leq K} D_j
\] From here, the proof follows follows the same lines as that of \eqref{eq:limit1}, we omit the details.

\newpage

\centerline{\bf Acknowledgements}
Part of this work was carried out during visits of some of the authors to NYU Shanghai, Pontificia Universidad Cat\'olica de Chile and Universidad de Buenos Aires. The authors would like to thank the institutions for their hospitality and financial support.
Alejandro Ram\'irez was partially supported by NFSC 12471147 grant and by NYU Shanghai Boost Fund.
Pablo Groisman and Sebasti\'an Zaninovich were partially supported by CONICET Grant PIP 2021 11220200102825CO, UBACyT Grant 20020190100293BA and PICT 2021-00113 from Agencia I+D. Santiago Saglietti was partially supported by Fondecyt Grant 1240848.
\bibliographystyle{abbrv}
\bibliography{biblio}
\end{document}